\newtheorem{thm}{Theorem}[section] %\numberwithin{thm}{chapter}
\newtheorem{lemma}[thm]{Lemma}
\newtheorem{prop}[thm]{Proposition}
\newtheorem{cor}[thm]{Corollary}
\theoremstyle{definition}
\newtheorem{example}[thm]{Example}
\def\C{{\mathbb C}}
\def\Z{{\mathbb Z}}
\def\N{{\mathbb N}}
\def\P{{\mathbb P}}
\DeclareMathOperator{\Gr}{Gr}
\DeclareMathOperator{\Fl}{Fl}
\DeclareMathOperator{\LG}{LG}
\DeclareMathOperator{\IF}{IF}
\DeclareMathOperator{\OG}{OG}
\DeclareMathOperator{\OF}{OF}
\DeclareMathOperator{\QH}{QH}
\DeclareMathOperator{\cTor}{{\it Tor}}
\def\la{\lambda}
\def\bull{{\scriptscriptstyle\bullet}}
\def\ssm{\smallsetminus}
\def\wh{\widehat}
\def\wt{\widetilde}
\def\wb{\overline}
\def\cE{{\mathcal E}}
\def\cF{{\mathcal F}}
\def\cO{{\mathcal O}}
\def\cS{{\mathcal S}}
\def\op{\mathrm{op}}
\def\cA{{\mathcal A}}
\def\cB{{\mathcal B}}
\def\cC{{\mathcal C}}
\def\pt{\text{point}}
\newcommand{\bspan}[1]{\langle #1 \rangle}
\newcommand{\euler}[1]{\chi_{_{#1}}}
\newcommand{\pic}[2]{\includegraphics[scale=#1]{#2}}
\newcommand{\ignore}[1]{}
\begin{document}

\title{Pieri rules for the $K$-theory of cominuscule Grassmannians}

\date{May 14, 2010}

\author{Anders Skovsted Buch}
\address{Department of Mathematics, Rutgers University, 110
  Frelinghuysen Road, Piscataway, NJ 08854, USA}
\email{asbuch@math.rutgers.edu\vspace{-2mm}}

\author{Vijay Ravikumar}
%\address{Department of Mathematics, Rutgers University, 110
%  Frelinghuysen Road, Piscataway, NJ 08854, USA}
\email{vravikum@math.rutgers.edu}

\subjclass[2000]{Primary 14N15; Secondary 19E08, 14M15}

\thanks{The first author was supported in part by NSF Grant
  DMS-0603822.}

\begin{abstract}
  We prove Pieri formulas for the multiplication with special Schubert
  classes in the $K$-theory of all cominuscule Grassmannians.  For
  Grassmannians of type A this gives a new proof of a formula of
  Lenart.  Our formula is new for Lagrangian Grassmannians, and for
  orthogonal Grassmannians it proves a special case of a conjectural
  Littlewood-Richardson rule of Thomas and Yong.  Recent work of
  Thomas and Yong and of E.~Clifford has shown that the full
  Littlewood-Richardson rule for orthogonal Grassmannians follows from
  the Pieri case proved here.  We describe the $K$-theoretic Pieri
  coefficients both as integers determined by positive recursive
  identities and as the number of certain tableaux.  The proof is
  based on a computation of the sheaf Euler characteristic of triple
  intersections of Schubert varieties, where at least one Schubert
  variety is special.
\end{abstract}

\maketitle

\section{Introduction}\label{sec:intro}

By a {\em cominuscule Grassmannian\/} we will mean a Grassmann variety
$\Gr(m,n)$ of type A, a Lagrangian Grassmannian $\LG(n,2n)$, or a
maximal orthogonal Grassmannian $\OG(n,2n+1) \cong \OG(n+1,2n+2)$.
The goal of this paper is to prove (old and new) Pieri rules for the
multiplication by special Schubert classes in the Grothendieck ring of
any cominuscule Grassmannian.

Any homogeneous space $X$ has a decomposition into Schubert varieties
$X^\la$, which for cominuscule Grassmannians are indexed by partitions
$\la = (\la_1 \geq \dots \geq \la_\ell \geq 0)$ in a way so that the
codimension of $X^\la$ in $X$ is equal to the weight $|\la| = \sum
\la_i$.  The Schubert classes $[X^\la] \in H^{2|\la|}(X;\Z)$ defined
by the Schubert varieties give a $\Z$-basis for the singular
cohomology ring $H^*(X) = H^*(X;\Z)$.  This ring is furthermore
generated as a $\Z$-algebra by the special Schubert classes $[X^p]$
given by partitions with a single part $p$.  The Littlewood-Richardson
coefficients $c^\nu_{\la\mu}$ are the structure constants for $H^*(X)$
with respect to the Schubert basis, i.e.\ they are determined by the
identity
\begin{equation} 
  [X^\la] \cdot [X^\mu] = \sum_{|\nu|=|\la|+|\mu|} c^\nu_{\la\mu}\, [X^\nu]
\end{equation}
in $H^*(X)$.  Each coefficient $c^\nu_{\la\mu}$ in this sum depends on
the type of the Grassmannians $X$ as well as the partitions.  It is
equal to the number of points in the intersection of general
translates of the Schubert varieties $X^\la$, $X^\mu$, and
$X^{\nu^\vee}$, where $\nu^\vee$ is the Poincare dual partition of
$\nu$.  This gives the identity
\begin{equation}\label{eqn:ctrip}
  c^\nu_{\la\mu} = \int_X [X^\la] \cdot [X^\mu] \cdot [X^{\nu^\vee}] 
\end{equation}
when $|\nu|=|\la|+|\mu|$.  The Littlewood-Richardson coefficients of
cominuscule Grassmannians are well understood and are described by
variants of the celebrated Littlewood-Richardson rule, see e.g.\
\cite{macdonald:symmetric*1, fulton:young,
  leeuwen:littlewood-richardson, buch.kresch.ea:littlewood-richardson}
and the references given there.

The cohomology ring $H^*(X)$ is generalized by various other rings,
including the equivariant cohomology ring $H^*_T(X)$, the quantum
cohomology ring $\QH(X)$, and the Grothendieck ring $K(X)$ of
algebraic vector bundles on $X$, also called the $K$-theory of $X$.
Every Schubert variety $X^\la$ has a Grothendieck class $\cO^\la =
[\cO_{X^\la}]$ in $K(X)$ (see \S\ref{sec:ktheory}) and these classes
form a basis for the Grothendieck ring.  We can therefore define {\em
  $K$-theoretic Schubert structure constants\/} for $X$ by the
identity
\begin{equation}\label{eqn:kcoef}
  \cO^\la \cdot \cO^\mu = \sum_\nu c^\nu_{\la\mu}\, \cO^\nu \,.
\end{equation}
The coefficient $c^\nu_{\la\mu}$ is non-zero only if $|\nu| \geq
|\la|+|\mu|$, and for $|\nu|=|\la|+|\mu|$ it agrees with the
cohomological structure constant of the same name.  The $K$-theoretic
structure constants have signs that alternate with codimension, in the
sense that $(-1)^{|\nu|-|\la|-|\mu|}\, c^\nu_{\la\mu} \geq 0$; this
was proved by the first author for Grassmannians of type A
\cite{buch:littlewood-richardson} and by Brion for arbitrary
homogeneous spaces \cite{brion:positivity}.  If $X$ is a Grassmann
variety of type A, then several Littlewood-Richardson rules are
available that express the absolute value $|c^\nu_{\la\mu}|$ as the
number of elements in a combinatorially defined set
\cite{buch:littlewood-richardson, lascoux:transition,
  buch.kresch.ea:stable, thomas.yong:jeu}.  An earlier Pieri formula
of Lenart \cite{lenart:combinatorial} shows that the coefficients
$c^\nu_{\la,p}$, that describe multiplication with the special classes
$\cO^p$, are equal to signed binomial coefficients.  For Lagrangian
and orthogonal Grassmannians it is known how to multiply with the
Schubert divisor $\cO^1$, by using Lenart and Postnikov's
$K$-theoretic Chevalley formula which works for arbitrary homogeneous
spaces \cite{lenart.postnikov:affine}.  More recently Thomas and Yong
\cite{thomas.yong:jeu} have conjectured a full Littlewood-Richardson
rule for the $K$-theory of maximal orthogonal Grassmannians based on
$K$-theoretic jeu-de-taquin slides.  No such conjecture is presently
available for Lagrangian Grassmannians.

The main results of this paper are $K$-theoretic Pieri rules for
maximal orthogonal Grassmannians and Lagrangian Grassmannians.  These
rules are stated in two equivalent versions.  The first is a set of
recursive identities that make it possible to compute any coefficient
$c^\nu_{\la,p}$ from simpler ones in a way that makes the alternation
of signs transparent.  The second is a Littlewood-Richardson rule that
expresses $|c^\nu_{\la,p}|$ as the number of tableaux satisfying
certain properties.  For maximal orthogonal Grassmannians, Itai
Feigenbaum and Emily Sergel have shown us a proof that these tableaux
are identical to the increasing tableaux appearing in the conjecture
of Thomas and Yong \cite{thomas.yong:jeu}, which confirms that this
conjecture computes all Pieri coefficients correctly.  The tableaux
being counted for Lagrangian Grassmannians are new and contain both
primed and unprimed integers; it would be very interesting to extend
this rule to a full Littlewood-Richardson rule for all the
$K$-theoretic structure constants.  While the cohomological Schubert
calculus of Lagrangian and maximal orthogonal Grassmannians is
essentially the same, our results show that these spaces have quite
different $K$-theoretic Schubert calculus.  For example, we prove that
the $K$-theoretic structure constants $c^{\nu^\vee}_{\la\mu}$ for
orthogonal Grassmannians are invariant under permutations of $\la$,
$\mu$, and $\nu$, but show by example that this fails for Lagrangian
Grassmannians.

Thomas and Yong have conjectured a $K$-theoretic Littlewood-Richardson
rule for all minuscule homogeneous spaces, and proved that their rule
is a consequence of a well-definedness property of a jeu-de-taquin
algorithm, together with a proof that their conjecture gives the
correct answer for a generating set of Schubert classes
\cite{thomas.yong:jeu}.  For maximal orthogonal Grassmannians, the
well-definedness property has later been proved by Thomas and Yong
\cite{thomas.yong:k-theoretic} and by Edward Clifford
\cite{clifford:thesis}.  Since the special classes $\cO^p$ generate
the $K$-theory ring, the Pieri rule proved here provides the second
ingredient required for the proof of Thomas and Yong's conjecture.

The standard way to prove cohomological Pieri rules is to use that any
cohomological Pieri coefficient $c^\nu_{\la,p}$ counts the number of
points in an intersection of three Schubert varieties, one of which is
special \cite{hodge:intersection, sottile:pieris,
  bergeron.sottile:pieri-type, buch.kresch.ea:quantum}.  We prove our
formulas by using a $K$-theoretic adaption of this method.  For
Grassmannians of type A, this gives a new and geometric proof of
Lenart's Pieri rule \cite{lenart:combinatorial}.

The $K$-theoretic analogue of the triple intersections
(\ref{eqn:ctrip}) for Pieri coefficients are the numbers
$\euler{X}(\cO^\la \cdot \cO^p \cdot \cO^{\nu^\vee})$, where
$\euler{X} : K(X) \to \Z$ denotes the sheaf Euler characteristic map
(see \S\ref{sec:ktheory}).  However, the Pieri coefficients of
interest are given by $c^\nu_{\la,p} = \euler{X}(\cO^\la \cdot \cO^p
\cdot \cO_\nu^*)$, where $\cO_\nu^* \in K(X)$ is the $K$-theoretic
dual Schubert class of $X^\nu$, a class different from
$\cO^{\nu^\vee}$.  Our proof therefore requires a combinatorial
translation from triple intersection numbers to structure constants.
Furthermore, the $K$-theoretic triple intersection numbers are harder
to compute because they can be non-zero when the intersection of
general translates of $X^\la$, $X^p$, and $X^{\nu^\vee}$ has positive
dimension.  Here we use a construction from
\cite{buch.kresch.ea:quantum} to translate the computation of
intersection numbers to the $K$-theory ring of a projective space.  To
make this construction work in $K$-theory, we also need a Gysin
formula that was proved in \cite{buch.mihalcea:quantum} as an
application of a vanishing theorem of Koll\'ar \cite{kollar:higher}.
To satisfy the conditions of the Gysin formula, we have to prove that
a map from a Richardson variety to projective space has rational
general fibers and that its image has rational singularities.  For
Grassmannians of type A we obtain that the intersection number
$\euler{X}(\cO^\la \cdot \cO^p \cdot \cO^{\nu^\vee})$ is equal to one
whenever the intersection of arbitrary translates of $X^\la$, $X^p$,
and $X^{\nu^\vee}$ is not empty, and otherwise it is zero.  For
maximal orthogonal Grassmannians and Lagrangian Grassmannians, the
integer $\euler{X}(\cO^\la \cdot \cO^p \cdot \cO^{\nu^\vee})$ is equal
to the sheaf Euler characteristic of a complete intersection of linear
and quadric hypersurfaces in projective space.  We note that the
earlier published proofs of $K$-theoretic Pieri rules in
\cite{lenart:combinatorial, lenart.sottile:pieri-type,
  lenart.postnikov:affine} are combinatorial and do not rely on triple
intersections.  On the other hand, the method used here is likely to
work for other homogeneous spaces $G/P$.

Our paper is organized as follows.  In section \ref{sec:ktheory} we
recall some definitions and results related to the $K$-theory of
varieties.  Grassmannians of type A are then handled in section
\ref{sec:typeA}, maximal orthogonal Grassmannians in section
\ref{sec:typeB}, and Lagrangian Grassmannians in section
\ref{sec:typeC}.

We thank Itai Feigenbaum and Emily Sergel for their role in connecting
our Pieri formula for maximal orthogonal Grassmannians to the
conjecture of Thomas and Yong.

\section{The Grothendieck ring}\label{sec:ktheory}

In this section we recall some facts about the $K$-theory of algebraic
varieties; more details and references can be found in
\cite{fulton:intersection}.  The $K$-homology group $K_\circ(X)$ of an
algebraic variety $X$ is the Grothendieck group of coherent
$\cO_X$-modules, i.e.\ the free abelian group generated by isomorphism
classes $[\cF]$ of coherent sheaves on $X$, modulo the relations
$[\cF] = [\cF'] + [\cF'']$ whenever there exists a short exact
sequence $0 \to \cF' \to \cF \to \cF'' \to 0$.  This group is a module
over the $K$-cohomology ring $K^\circ(X)$, defined as the Grothendieck
group of algebraic vector bundles on $X$.  Both the multiplicative
structure of $K^\circ(X)$ and the module structure of $K_\circ(X)$ is
defined by tensor products.  Any closed subvariety $Z \subset X$ has a
{\em Grothendieck class\/} $[\cO_Z] \in K_\circ(X)$ defined by its
structure sheaf.  If $X$ is non-singular, then the implicit map
$K^\circ(X) \to K_\circ(X)$ that sends a vector bundle to its sheaf of
sections is an isomorphism; the inverse map is given by $[\cF] \mapsto
\sum_{i \geq 0} (-1)^i [\cE_i]$ where $0 \to \cE_r \to \dots \to \cE_1
\to \cE_0 \to \cF \to 0$ is any locally free resolution of the
coherent sheaf $\cF$.  In this case we will write simply $K(X)$ for
both $K$-theory groups.

Any morphism of varieties $f : X \to Y$ gives a ring homomorphism $f^*
: K^\circ(Y) \to K^\circ(X)$ defined by pullback of vector bundles.
If $f$ is proper, there is also a pushforward map $f : K_\circ(X) \to
K_\circ(Y)$ defined by $f_*[\cF] = \sum_{i\geq 0} (-1)^i [R^i f_*
\cF]$.  Both pullback and pushforward are functorial with respect to
composition of morphisms.  The projection formula states that
$f_*(f^*(\alpha) \cdot \beta) = \alpha \cdot f_*(\beta)$ for all
$\alpha \in K^\circ(Y)$ and $\beta \in K_\circ(X)$.  If $X$ is a
complete variety, then the sheaf Euler characteristic map $\euler{X} :
K_\circ(X) \to K_\circ(\pt) = \Z$ is defined as the pushforward along
the structure morphism $X \to \{\pt\}$, i.e.\ $\euler{X}([\cF]) =
\sum_{i \geq 0} (-1)^i \dim H^i(X,\cF)$.  If $X$ is irreducible and
rational with rational singularities then $\euler{X}([\cO_X]) = 1$
\cite[p.~494]{griffiths.harris:principles}.

We need the following pushforward formula, which was proved in
\cite[Thm.~3.1]{buch.mihalcea:quantum} as an application of a
vanishing theorem of Koll\'ar \cite[Thm.~7.1]{kollar:higher}.

\begin{lemma}\label{lem:gysin}
  Let $f : X \to Y$ be a surjective map of projective varieties with
  rational singularities.  Assume that $f^{-1}(y)$ is an irreducible
  rational variety for all closed points in a dense open subset of
  $Y$.  Then $f_*[\cO_X] = [\cO_Y ] \in K_\circ(Y)$.
\end{lemma}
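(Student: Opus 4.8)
The plan is to prove Lemma~\ref{lem:gysin} by reducing it to the statement that the higher direct images $R^i f_* \cO_X$ vanish for $i > 0$ and that the zeroth direct image $f_* \cO_X$ equals $\cO_Y$, after which the definition $f_*[\cO_X] = \sum_{i\geq 0}(-1)^i [R^i f_* \cO_X]$ immediately gives the claim. So the real content is a pair of sheaf-theoretic vanishing and identification statements on $Y$.

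First I would invoke the vanishing theorem of Koll\'ar \cite[Thm.~7.1]{kollar:higher} in the form given in \cite[Thm.~3.1]{buch.mihalcea:quantum}: if $f : X \to Y$ is a surjective morphism of projective varieties, $X$ has rational singularities (more precisely, it suffices that $X$ be, say, Cohen--Macaulay with the appropriate duality behavior, but rational singularities is what we are given), and the general fiber of $f$ is irreducible with rational singularities, then $R^i f_* \omega_X = 0$ for $i>0$ and, dualizing, one extracts the vanishing of $R^i f_* \cO_X$ for $i > 0$. The hypotheses of our lemma — $X$ and $Y$ projective with rational singularities, and $f^{-1}(y)$ irreducible and rational for $y$ in a dense open of $Y$ — are exactly tailored to feed into that theorem. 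Note that ``irreducible rational variety'' for the general fiber already forces the general fiber to be reduced and to have $\euler{}(\cO) = 1$; combined with properness of $f$ this is what controls $R^0$.

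Next I would identify $f_*\cO_X$ with $\cO_Y$. Since $f$ is surjective and projective with $X$ irreducible (irreducibility of $X$ follows because $Y$ is irreducible and the general fiber is irreducible, or one can simply assume it), the sheaf $f_*\cO_X$ is a coherent $\cO_Y$-algebra, torsion-free, and generically of rank $1$ because the general fiber is connected and reduced with $h^0(\cO) = 1$. Thus $\cO_Y \hookrightarrow f_*\cO_X$ is an inclusion of sheaves that is an isomorphism over a dense open subset of $Y$, and the cokernel is supported on a proper closed subset. To upgrade this to an isomorphism on all of $Y$ one uses that $Y$ has rational singularities, hence is normal (and Cohen--Macaulay), so $\cO_Y$ is the integral closure of itself in its function field; since $f_*\cO_X$ is a finite $\cO_Y$-algebra that agrees with $\cO_Y$ generically and $Y$ is normal, Zariski's main theorem (or simply normality: any function in $f_*\cO_X$ is integral over $\cO_Y$ and lies in the function field, hence lies in $\cO_Y$) forces $f_*\cO_X = \cO_Y$. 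Putting the two computations together, $f_*[\cO_X] = [f_*\cO_X] - [R^1f_*\cO_X] + \dots = [\cO_Y]$ in $K_\circ(Y)$.

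The main obstacle is really the bookkeeping of hypotheses needed to apply Koll\'ar's theorem in the precise form required: one must check that the assumption ``$f^{-1}(y)$ irreducible and rational for $y$ in a dense open'' together with rational singularities of $X$ and $Y$ suffices to conclude $R^i f_*\cO_X = 0$ for $i>0$, rather than merely $R^i f_*\omega_X = 0$, which involves a Grothendieck--Serre duality argument and the Cohen--Macaulay property supplied by rational singularities. Since this dualization is exactly the content of \cite[Thm.~3.1]{buch.mihalcea:quantum}, which we are permitted to cite, the proof in this paper can be quite short: quote that theorem for the higher vanishing, dispatch the $R^0$ identification via normality of $Y$, and assemble the Euler characteristic. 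I would therefore write the proof as essentially a two-line reduction to \cite[Thm.~3.1]{buch.mihalcea:quantum} plus the normality argument for $f_*\cO_X = \cO_Y$.
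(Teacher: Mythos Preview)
The paper does not supply its own proof of this lemma; it simply states the result and cites \cite[Thm.~3.1]{buch.mihalcea:quantum}, noting that it is an application of Koll\'ar's vanishing theorem \cite[Thm.~7.1]{kollar:higher}. Your proposal correctly identifies this citation and accurately reconstructs the two ingredients behind it --- Koll\'ar's theorem (plus duality) for the vanishing of $R^i f_*\cO_X$ with $i>0$, and normality of $Y$ together with connectedness of the general fiber for $f_*\cO_X = \cO_Y$ --- so your approach is exactly in line with the paper's, only more explicit.
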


We also need the following well known fact.

\begin{lemma}
  Let $X$ be a non-singular variety and let $Y$ and $Z$ be closed
  varieties of $X$ with Cohen-Macaulay singularities.  Assume that
  each component of $Y \cap Z$ has dimension
  $\dim(Y)+\dim(Z)-\dim(X)$.  Then $Y\cap Z$ is Cohen-Macaulay and
  $[\cO_Y] \cdot [\cO_Z] = [\cO_{Y\cap Z}]$ in $K(X)$.
\end{lemma}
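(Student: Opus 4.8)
The plan is to identify the $K$-theory product with an alternating sum of local Tor sheaves and then to force all the higher Tor sheaves to vanish. Since $X$ is non-singular we have $K(X)=K^\circ(X)=K_\circ(X)$, so if $\cE_\bull\to\cO_Y$ is a bounded locally free resolution then $[\cO_Y]\cdot[\cO_Z]=\sum_{i\ge 0}(-1)^i[\cE_i\otimes_{\cO_X}\cO_Z]=\sum_{i\ge 0}(-1)^i\bigl[\cTor_i^{\cO_X}(\cO_Y,\cO_Z)\bigr]$, because the class of a bounded complex of coherent sheaves in $K_\circ(X)$ equals the alternating sum of the classes of its cohomology sheaves. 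Thus it suffices to prove that $\cTor_i^{\cO_X}(\cO_Y,\cO_Z)=0$ for $i>0$, and that the sheaf $\cO_Y\otimes_{\cO_X}\cO_Z$, which is supported on $Y\cap Z$ and which I denote $\cO_{Y\cap Z}$, is Cohen-Macaulay.

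These assertions are local on $X$ and trivial away from $Y\cap Z$, so fix a closed point $w\in Y\cap Z$ and put $n=\dim X$. I would argue by reduction to the diagonal. Let $\Delta\colon X\hookrightarrow X\times X$ be the diagonal embedding; since $X$ is non-singular this is a regular embedding of codimension $n$, so near $(w,w)$ the ideal of $\Delta(X)$ is generated by a regular sequence $g_1,\dots,g_n\in\cO_{X\times X}$, with the associated Koszul complex serving as a locally free resolution of $\cO_{\Delta(X)}$. Pulling the exterior tensor product $\cO_Y\boxtimes\cO_Z=\cO_{Y\times Z}$ back along $\Delta$ recovers $\cO_Y\otimes_{\cO_X}\cO_Z$, and the classical reduction-to-the-diagonal isomorphism (see \cite{fulton:intersection}) gives $\cTor_i^{\cO_{X\times X}}(\cO_{Y\times Z},\cO_{\Delta(X)})\cong\cTor_i^{\cO_X}(\cO_Y,\cO_Z)$ for all $i$. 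Computing the left side with the Koszul resolution, it becomes the Koszul homology of $\bar g_1,\dots,\bar g_n$ on the local ring $A=\cO_{Y\times Z,(w,w)}$, where $\bar g_j$ is the image of $g_j$.

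Now the crux. The ring $A$ is Cohen-Macaulay, since $Y\times Z$ is (a product of Cohen-Macaulay varieties), and $\dim A=\dim Y+\dim Z$ because $Y$ and $Z$ are irreducible. As $(w,w)$ lies on both $Y\times Z$ and $\Delta(X)$, the elements $\bar g_1,\dots,\bar g_n$ lie in the maximal ideal of $A$, and $A/(\bar g_1,\dots,\bar g_n)$ is the local ring of $(Y\times Z)\cap\Delta(X)\cong Y\cap Z$ at $w$, which by hypothesis has dimension $\dim Y+\dim Z-n=\dim A-n$. In a Cohen-Macaulay local ring a sequence of $r$ elements of the maximal ideal whose quotient has dimension exactly $\dim A-r$ is automatically a regular sequence; hence $\bar g_1,\dots,\bar g_n$ is a regular sequence on $A$. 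Consequently the Koszul complex is acyclic in positive degrees, which translates back to $\cTor_i^{\cO_X}(\cO_Y,\cO_Z)=0$ for $i>0$ and $\cTor_0^{\cO_X}(\cO_Y,\cO_Z)=\cO_Y\otimes_{\cO_X}\cO_Z=\cO_{Y\cap Z}$; moreover $\cO_{Y\cap Z}$ is Cohen-Macaulay, being locally the quotient of the Cohen-Macaulay ring $A$ by a regular sequence. Combined with the first paragraph this yields $[\cO_Y]\cdot[\cO_Z]=[\cO_{Y\cap Z}]$ in $K(X)$.

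The only step that is not pure bookkeeping is the identification of $\bar g_1,\dots,\bar g_n$ as a regular sequence on $A$: it rests on the interplay between the Cohen-Macaulay hypothesis, the proper-intersection dimension count, and the characterization of regular sequences in Cohen-Macaulay local rings by dimension drop, together with the reduction-to-the-diagonal isomorphism for $\cTor$ (classical, due to Serre). Everything else is formal manipulation of $K$-theory classes and standard properties of the Koszul complex.
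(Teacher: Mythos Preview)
Your argument is correct and essentially identical to the paper's. Both reduce to the diagonal $\Delta:X\hookrightarrow X\times X$, identify the higher $\cTor$'s with Koszul homology for the regular sequence cutting out $\Delta(X)$, and then argue that this sequence remains regular on $\cO_{Y\times Z}$; the paper cites \cite[A.7.1]{fulton:intersection} for this last step, whereas you spell it out via the Cohen--Macaulay dimension-drop criterion. (One small quibble: you invoke irreducibility of $Y$ and $Z$ to get $\dim A=\dim Y+\dim Z$, which is not part of the hypothesis; it suffices that Cohen--Macaulay schemes are locally equidimensional, which gives the needed local dimension count.)
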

\begin{proof}
  The diagonal embedding $\Delta : X \to X \times X$ is a regular
  embedding \cite[B.7.3]{fulton:intersection}, and any local regular
  sequence defining the ideal of $X$ in $X\times X$ restricts to a
  local regular sequence defining the ideal of $Y \cap Z$ in $Y \times
  Z$ by \cite[A.7.1]{fulton:intersection}.  This implies that
  $\cTor_i^{X\times X}(\cO_X, \cO_{Y\times Z}) = 0$ for all $i>0$, so
  $[\cO_Y] \cdot [\cO_Z] = \Delta^*[\cO_{Y \times Z}] = \sum_{i \geq
    0} (-1)^i [\cTor_i^{X\times X}(\cO_X, \cO_{Y\times Z})] = [\cO_Y
  \otimes \cO_Z] = [\cO_{Y\cap Z}]$, as required.
\end{proof}

Now let $X = G/P$ be a homogeneous space defined by a complex
connected semisimple linear algebraic group $G$ and a parabolic
subgroup $P$; all Grassmannians can be constructed in this fashion.
The Schubert varieties of $X$ relative to a Borel subgroup $B \subset
G$ are the closures of the $B$-orbits in $X$, and the Grothendieck
classes of these varieties form a $\Z$-basis for the $K$-theory ring
$K(X)$.  It is known that all Schubert varieties are Cohen-Macaulay
and have rational singularities \cite{mehta.srinivas:normality,
  lauritzen.thomsen:frobenius}.  As a special case this implies that
an quadric hypersurfaces in $\C^n$ has rational singularities, since
it is isomorphism to a product of a smaller affine space with an open
subset of the Schubert divisor in an orthogonal Grassmannian
$\OG(1,m)$ (see e.g.\ \cite[\S 4]{buch.kresch.ea:quantum}).

A Richardson variety is any non-empty intersection $Y \cap Z$,
where $Y$ is a Schubert variety relative to $B$ and $Z$ is a Schubert
variety relative to the opposite Borel subgroup $B^\op$.  It was
proved by Deodhar that any Richardson variety is irreducible and
rational \cite{deodhar:on}, and Brion has proved in that Richardson
varieties have rational singularities \cite{brion:positivity}.  In
particular, the sheaf Euler characteristic of a Richardson variety is
equal to one.

Finally we mention that a Schubert variety in the projective space
$\P^n$ is the same as a linear subspace.  The Grothendieck ring is
given by $K(\P^n) = \Z[t]/(t^{n+1})$, where $t$ is the class of a
hyperplane.  The class of a quadric hypersurface is equal to $2t-t^2$,
and a linear subspace of codimension $i$ has class $t^i$.  In
particular, the map $\euler{\P^n} : K(\P^n) \to \Z$ is determined by
$\euler{\P^n}(t^i) = 1$ for $0 \leq i \leq n$.

\section{Grassmannians of type A}\label{sec:typeA}

Let $X = \Gr(m,n) = \{ V \subset \C^n \mid \dim(V)=m \}$ be the
Grassmann variety of $m$-planes in $\C^n$.  This is a non-singular
variety of complex dimension $mk$, where $k = n-m$.  The Schubert
varieties in $X$ are indexed by partitions $\la = (\la_1 \geq \dots
\geq \la_m \geq 0)$ such that $\la_1 \leq k$.  Equivalently, the Young
diagram of $\la$ can be contained in a rectangle $m$ rows and $k$
columns.  We will identify $\la$ with its Young diagram.  The Schubert
variety for $\la$ relative to a complete flag $0 \subsetneq F_1
\subsetneq F_2 \subsetneq \dots \subsetneq F_n = \C^n$ is defined by
\begin{equation*}
 X^\la(F_\bull) = \{V \in X \mid
   \dim(V\cap F_{k+i-\la_i}) \geq i ~\forall 1 \leq i \leq m \} \,.
\end{equation*}
The codimension of this variety in $X$ is equal to the weight $|\la| =
\sum \la_i$.  If $u_1,\dots,u_r$ are vectors in a complex vector
space, then we let $\bspan{u_1,\dots,u_r}$ denote their span.  Let
$e_1,\dots,e_n$ be the standard basis of $\C^n$.  We will mostly
consider Schubert varieties relative to the standard flags in $\C^n$,
defined by $E_i = \bspan{e_1,\dots,e_i}$ and $E^\op_i =
\bspan{e_{n+1-i},\dots,e_n}$.  Let $\cO^\la = [\cO_{X^\la}] \in K(X)$
denote the class of $X^\la := X^\la(E_\bull)$.  The $K$-theoretic
Schubert structure constants $c^\nu_{\la\mu}$ for $X$ are defined by
equation (\ref{eqn:kcoef}), where the sum includes all partitions
$\nu$ contained in the $m\times k$-rectangle.  The goal of this
section is to give a simple geometric proof of Lenart's Pieri rule for
the special coefficients $c^\nu_{\la,p}$, where we identify the
integer $p \in \N$ with the one-part partition $(p)$.

Given a partition $\mu$ contained in the $m\times k$-rectangle, let
$\mu^\vee = (k-\mu_m,\dots,k-\mu_1)$ denote the Poincare dual
partition.  We will also call this partition the $m\times k$--dual of
$\mu$.  The intersection $X^\la(E_\bull) \cap X^\mu(E^\op_\bull)$ is
non-empty if and only if $\la \subset \mu^\vee$.  Assume that $\la
\subset \mu^\vee$ and let $\theta = \mu^\vee / \la$ be the skew
diagram of boxes in $\mu^\vee$ that are not in $\la$.  This is the set
of boxes remaining when the boxes of $\la$ are deleted from the
upper-left corner of the $m \times k$-rectangle, and the boxes of
$\mu$ (rotated by 180 degrees) are deleted from the lower-right
corner.
\begin{equation*}
\pic{.5}{thetaA}
\end{equation*}
Define the Richardson variety $X_\theta = X^\la(E_\bull) \cap
X^\mu(E^\op_\bull) \subset X$.  This variety has dimension $|\theta| =
mk-|\la|-|\mu|$.  As a special case, notice that $X_\la =
X^{\la^\vee}(E^\op_\bull)$ is the dual Schubert variety of $X^\la$.
Let $\cO_\theta = [\cO_{X_\theta}] = \cO^\la \cdot \cO^\mu \in K(X)$
denote the Grothendieck class of $X_\theta$.  While $X_\theta$ depends
on the partitions $\la$ and $\mu$, it follows from
Lemma~\ref{lem:chopA} below that its isomorphism class depends only on
the skew diagram $\theta$.  For any vector $u \in \C^n$ we define
$X_\theta(u) = \{ V \in X_\theta \mid u \in V \}$.  Let $\bigcup
X_\theta = \bigcup_{V \in X_\theta} V \subset \C^n$ be the set of
vectors $u \in \C^n$ for which $X_\theta(u) \neq \emptyset$.  This set
is an irreducible closed subvariety of $\C^n$, because $\bigcup
X_\theta = \pi_2(\pi_1^{-1}(X_\theta))$ where $\pi_1 : \cS \to X$ and
$\pi_2 : \cS \to \C^n$ are the natural projections from the
tautological subbundle $\cS = \{(V,u) \in X \times \C^n \mid u \in V
\}$.  We wish to show that if $u$ is a general vector in $\bigcup
X_\theta$, then $X_\theta(u)$ is a Richardson variety in the
Grassmannian $\Gr(m-1,\C^n/\bspan{u})$, which we identify with the
$m$-planes in $X$ that contain $u$.  In particular, $X_\theta(u)$ is
irreducible and rational.  The following example shows that this may
be false without the assumption that $u$ is general.

\begin{example}
  Let $X = \Gr(3,5)$ and $\la = \mu = (1)$.  Then $\bigcup X_\theta =
  \C^5$.  Set $u = e_1+e_5$.  Then $X_\theta(u)$ has two components,
  which can be naturally identified with $\P(\C^5/\bspan{e_1,e_5})$
  and $\Gr(2,\bspan{e_1,e_2,e_4,e_5}/\bspan{e_1+e_5})$.  These
  components meet in the line $\P(\bspan{e_2,e_4})$.
\end{example}

Assume that $a \in [0,m]$ and $b \in [0,k]$ are integers such that
$\la_a \geq b$ and $\mu_{m-a} \geq k-b$ (here we set $\la_0 = \mu_0 =
k$).  This implies that the diagram $\theta = \mu^\vee/\la$ can be
split into a north-east part $\theta'$ in rows 1 through $a$ and a
south-west part $\theta''$ in rows $a+1$ through $m$.
\begin{equation*} 
\pic{.5}{chopA}
\end{equation*} 
Set $\la' = (\la_1-b,\dots,\la_a-b)$, $\mu' =
(\mu_{m-a+1},\dots,\mu_m)$, and $\theta' = {\mu'}^\vee/\la'$, where
${\mu'}^\vee$ is the $a \times (k-b)$--dual of $\mu'$.  This skew
diagram defines a Richardson variety $X'_{\theta'}$ in the Grassmannian
$X' = \Gr(a,E_{k+a-b})$, where we use the (ordered) basis
$e_1,\dots,e_{k+a-b}$ for $E_{k+a-b}$.  Similarly we set $\la'' =
(\la_{a+1},\dots,\la_m)$, $\mu'' = (\mu_1-k+b,\dots,\mu_{m-a}-k+b)$,
and $\theta'' = {\mu''}^\vee/\la''$, which defines the Richardson
variety $X''_{\theta''}$ in $X'' = \Gr(m-a,E^\op_{m+b-a})$, using the
basis $e_{k+a-b+1},\dots,e_n$ for $E^\op_{m+b-a}$.  Set $\bigcup
X'_{\theta'} = \bigcup_{V' \in X'_{\theta'}} V' \subset E_{k+a-b}$ and
$\bigcup X''_{\theta''} = \bigcup_{V''\in X''_{\theta''}} V'' \subset
E^\op_{m+b-a}$.

\begin{lemma}\label{lem:chopA}
  {\rm(a)} We have $\bigcup X_\theta = \bigcup X'_{\theta'} \times
  \bigcup X''_{\theta''}$ in $\C^n = E_{k+a-b} \times
  E^\op_{m+b-a}$.\smallskip

  \noindent
  {\rm(b)} For arbitrary vectors $u' \in E_{k+a-b}$ and $u'' \in
  E_{m+b-a}$, the inclusion $X' \times X'' \subset X$ defined by
  $(V',V'') \mapsto V'\oplus V''$ identifies $X'_{\theta'}(u') \times
  X''_{\theta''}(u'')$ with $X_\theta(u' + u'')$.
%
%  \noindent
%  {\rm(b)} If $u' \in E_{k+a-b}$ and $u'' \in E_{m+b-a}$ are arbitrary
%  vectors, then the natural inclusion $X' \times X'' \subset X$
%  defined by $(V',V'') \mapsto V'\oplus V''$ identifies
%  $X'_{\theta'}(u') \times X''_{\theta''}(u'')$ with $X_\theta(u' +
%  u'')$.
\end{lemma}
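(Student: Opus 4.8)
The plan is to deduce both parts from a single structural fact: the direct-sum map
\[
  \Phi : X' \times X'' \hookrightarrow X, \qquad (V',V'') \mapsto V' \oplus V'',
\]
which is a closed embedding because $\C^n = E_{k+a-b} \oplus E^\op_{m+b-a}$, carries $X'_{\theta'} \times X''_{\theta''}$ isomorphically onto $X_\theta$, and does so compatibly with vectors: for $u' \in E_{k+a-b}$ and $u'' \in E^\op_{m+b-a}$ one has $u' + u'' \in V' \oplus V''$ exactly when $u' \in V'$ and $u'' \in V''$. Granting this, (b) is immediate. For (a), any $u \in \C^n$ has a unique expression $u = u' + u''$ with $u' \in E_{k+a-b}$, $u'' \in E^\op_{m+b-a}$, and then $X_\theta(u) = \Phi\bigl(X'_{\theta'}(u') \times X''_{\theta''}(u'')\bigr)$ is nonempty precisely when both factors are; since $X_\theta \ne \emptyset$ by hypothesis, the Richardson varieties $X'_{\theta'}$ and $X''_{\theta''}$ are nonempty, so $0$ lies in each of $\bigcup X'_{\theta'}$ and $\bigcup X''_{\theta''}$, whence $\bigcup X_\theta = \bigcup X'_{\theta'} \times \bigcup X''_{\theta''}$.

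The substance is therefore the claim that for $V' \in X'$ and $V'' \in X''$ one has $V' \oplus V'' \in X_\theta$ if and only if $V' \in X'_{\theta'}$ and $V'' \in X''_{\theta''}$, and that conversely every $V \in X_\theta$ equals $(V \cap E_{k+a-b}) \oplus (V \cap E^\op_{m+b-a})$. For the converse I would first extract the direct-sum decomposition: the incidence condition at index $i = a$ in the definition of $X^\la(E_\bull)$, together with $\la_a \ge b$, forces $\dim(V \cap E_{k+a-b}) \ge a$, and symmetrically the condition at index $i = m-a$ in $X^\mu(E^\op_\bull)$, together with $\mu_{m-a} \ge k-b$, forces $\dim(V \cap E^\op_{m+b-a}) \ge m-a$; since these two subspaces of $V$ meet only in $0$ while $\dim V = m = a + (m-a)$, both inequalities are equalities and $V$ is their internal direct sum. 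Writing $V' = V \cap E_{k+a-b}$, $V'' = V \cap E^\op_{m+b-a}$, the biconditional is then a term-by-term comparison of the $2m$ incidence conditions. For $1 \le i \le a$ the subspace $E_{k+i-\la_i}$ of the $i$-th condition for $X^\la$ lies inside $E_{k+a-b}$ (since $\la_i \ge \la_a \ge b$) and coincides with the subspace $E'_{k'+i-\la'_i}$ of the flag defining $X'$, the arithmetic being $k' + i - \la'_i = (k-b)+i-(\la_i-b) = k+i-\la_i$ with $k' = k-b$; so that condition on $V$ is equivalent to the $i$-th one on $V'$. For $i = a+j$ with $1 \le j \le m-a$ one instead has $E_{k+a-b} \subseteq E_{k+i-\la_i}$ and $E_{k+i-\la_i} \cap E^\op_{m+b-a} = E''_{k''+j-\la''_j}$ (with $k'' = b$), so that condition becomes the $j$-th one on $V''$. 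The conditions coming from $\mu$ and the opposite flags $E^\op_\bull$, ${E'}^\op_\bull$, ${E''}^\op_\bull$ are handled identically, splitting the index range at $i = m-a$ rather than at $i = a$.

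I expect the main obstacle to be this bookkeeping, and in particular one arithmetic input that is not formal: the containment $E_{k+a-b} \subseteq E_{k+(a+j)-\la_{a+j}}$, used for $j \ge 1$, requires $\la_{a+j} \le b+j$, which does not follow from $\la_a \ge b$ and $\mu_{m-a} \ge k-b$ by themselves. The resolution is to invoke the standing assumption $\la \subset \mu^\vee$ — equivalently $\mu \subset \la^\vee$: with $\mu_{m-a} \ge k-b$ this gives $\la_{a+1} \le \mu^\vee_{a+1} = k - \mu_{m-a} \le b$, and with $\la_a \ge b$ it gives $\mu_{m-a+1} \le \la^\vee_{m-a+1} = k - \la_a \le k-b$. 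These are exactly the inequalities ensuring that $\la''$ fits in the $(m-a)\times b$ rectangle of $X''$ and $\mu'$ in the $a\times(k-b)$ rectangle of $X'$, i.e.\ that $\theta''$ and $\theta'$ are genuine skew shapes, and they supply every remaining subspace containment (for instance $\la_{a+j} \le \la_{a+1} \le b \le b+j$). Once the incidence conditions are matched, the vector-level refinement is purely formal: since $\C^n = E_{k+a-b} \oplus E^\op_{m+b-a}$ is a direct sum, $u' + u'' \in V' \oplus V''$ iff $u' \in V'$ and $u'' \in V''$, which yields $\Phi\bigl(X'_{\theta'}(u') \times X''_{\theta''}(u'')\bigr) = X_\theta(u'+u'')$ and hence both (a) and (b).
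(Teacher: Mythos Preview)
Your proof is correct and follows the same approach as the paper: establish that every $V\in X_\theta$ splits as $(V\cap E_{k+a-b})\oplus(V\cap E^\op_{m+b-a})$ via the dimension inequalities forced by $\la_a\ge b$ and $\mu_{m-a}\ge k-b$, then match the incidence conditions term by term. The paper compresses the second step into the phrase ``immediate from the definitions''; you have spelled it out, and in particular correctly isolated the one nontrivial input $\la_{a+1}\le b$ (and its dual $\mu_{m-a+1}\le k-b$) coming from $\la\subset\mu^\vee$, which the paper uses silently.
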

\begin{proof}
  If $V \in X_\theta$ then $\dim(V \cap E_{k+a-b}) \geq \dim(V \cap
  E_{k+a-\la_a}) \geq a$ and $\dim(V \cap E^\op_{m+b-a}) \geq \dim(V
  \cap E^\op_{k+(m-a)-\mu_{m-a}}) \geq m-a$.  It follows that $V = V'
  \oplus V''$ where $V' = V \cap E_{k+a-b} \in X'$ and $V'' = V \cap
  E^\op_{m+b-a} \in X''$.  Given arbitrary points $V' \in X'$ and $V''
  \in X''$ it is immediate from the definitions that $V'\oplus V'' \in
  X_\theta(u'+u'')$ if and only if $V' \in X'_{\theta'}(u')$ and $V''
  \in X''_{\theta''}(u'')$.  The lemma follows from this.
\end{proof}

Let $\wb\theta$ be the diagram obtained from $\theta$ by removing the
bottom box in each non-empty column.  Let $c(\theta) = |\theta| -
|\wb\theta|$ be the number of non-empty columns.

\begin{lemma}\label{lem:fiberA}
  {\rm(a)} The set $\bigcup X_\theta$ is a linear subspace of $\C^n$
  of dimension $m + c(\theta)$.\smallskip

  \noindent
  {\rm(b)} For all vectors $u$ in a dense open subset of $\bigcup
  X_\theta$ we have $X_\theta(u) \cong X_{\wb\theta}$.
\end{lemma}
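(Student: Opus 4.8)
The plan is to reduce both parts to an explicit coordinate description of the skew diagram $\theta$, using Lemma~\ref{lem:chopA} to peel off columns. First I would set up the relevant notation: a non-empty column of $\theta$ occupies rows $i$ through $j$ for some $i \leq j$, and the bottom box of that column is the box in row $j$. Removing all bottom boxes produces $\wb\theta$, which is again a skew diagram $\mu'^\vee/\la'$ inside a smaller rectangle, since deleting the bottom box of each column of a skew shape yields another skew shape (the inner shape $\la$ is unchanged, and the outer shape shrinks by one in each non-empty column). Part (a) asserts that $\dim \bigcup X_\theta = m + c(\theta)$, and part (b) that a general fiber $X_\theta(u)$ is isomorphic to $X_{\wb\theta}$.

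For part (a): I would first verify that $\bigcup X_\theta$ is linear. We have already noted in the text that $\bigcup X_\theta = \pi_2(\pi_1^{-1}(X_\theta))$ is an irreducible closed subvariety of $\C^n$; to see it is a linear subspace, the cleanest route is to identify $\bigcup X_\theta$ with $\bspan{e_c : c \text{ is a column index meeting } \theta, \text{ with appropriate row offset}}$ — more precisely, with the span of those standard basis vectors $e_t$ such that some $V \in X_\theta$ contains $e_t$. Using the Schubert conditions defining $X_\theta = X^\la(E_\bull)\cap X^\mu(E_\bull^\op)$ and the fact that $X_\theta$ contains the coordinate subspace $V_0$ spanned by $e_{k+i-\la_i}$ for $i=1,\dots,m$, one checks that $V_0 \in X_\theta$, so $\dim \bigcup X_\theta \geq m$; and each non-empty column of $\theta$ contributes exactly one extra basis direction that can be added to a coordinate plane while staying in $X_\theta$. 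This gives $\dim \bigcup X_\theta = m + c(\theta)$. An induction using Lemma~\ref{lem:chopA}(a) — taking $a,b$ so as to split off the first non-empty column — makes this bookkeeping clean, since $\bigcup X_\theta = \bigcup X'_{\theta'} \times \bigcup X''_{\theta''}$ and one of the factors handles a single column.

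For part (b): given a general $u \in \bigcup X_\theta$, I want $X_\theta(u) \cong X_{\wb\theta}$. The idea is that imposing $u \in V$ for $V \in X_\theta$ is, for general $u$, equivalent to one generic linear condition transverse to the Schubert conditions, which has the effect of passing to $\Gr(m-1, \C^n/\bspan u)$ and shrinking the outer shape by removing the bottom box of each column — exactly the operation $\theta \mapsto \wb\theta$. Concretely: write $u = \sum_{t} u_t e_t$ with the sum over the basis of $\bigcup X_\theta$ and all $u_t \neq 0$ (the general condition). For $V \in X_\theta(u)$, the image $\wb V := V/\bspan u \subset \C^n/\bspan u$ lies in $\Gr(m-1,n-1)$, and one checks directly from the dimension-of-intersection inequalities that $\wb V$ satisfies precisely the Schubert conditions cutting out $X_{\wb\theta}$ relative to the induced flags, with the row indices shifted by the column-bottom deletion; conversely any such $\wb V$ lifts uniquely to a $V \in X_\theta(u)$. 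Again the slickest implementation inducts on $c(\theta)$ via Lemma~\ref{lem:chopA}(b): split off one non-empty column, write $u = u' + u''$ generically, apply the single-column case to one factor and induction to the other.

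The main obstacle will be the single-column base case and, relatedly, making precise the claim that $\dim \bigcup X_\theta$ gains exactly one from each non-empty column and that a general $u$ "uses" each such column — i.e.\ pinning down which standard basis vectors span $\bigcup X_\theta$ directly from the Schubert inequalities $\dim(V\cap E_{k+i-\la_i})\geq i$ and $\dim(V\cap E^\op_{k+i-\mu_i})\geq i$. Once the coordinate description of $\bigcup X_\theta$ is established, verifying that the general fiber is the Richardson variety $X_{\wb\theta}$ is a matter of matching up indices: the bottom box of a column of $\theta$ sits in row $r$ and column $\la_r+1$ (in that column's local count), and quotienting by $u$ decrements the ambient dimension and removes exactly that box, so the inner shape is preserved while the outer shape loses its bottom box in each non-empty column, which is the definition of $\wb\theta$. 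The transversality that makes "general $u$" work is that the single extra linear condition $u \in V$ is generic with respect to the torus-fixed flags defining $X_\theta$, so the intersection stays of expected dimension and, being cut from a Richardson variety by a generic hyperplane section in the tautological-bundle picture, remains a Richardson variety.
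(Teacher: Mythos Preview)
Your high-level strategy (reduce via Lemma~\ref{lem:chopA}, then handle a base case directly by quotienting by a generic $u$) is the same as the paper's. But there is a genuine gap in your reduction: Lemma~\ref{lem:chopA} does \emph{not} let you peel off one column at a time. The hypothesis $\la_a \geq b$ and $\mu_{m-a} \geq k-b$ says precisely that no column of $\theta$ has a box in both row $a$ and row $a+1$; in other words, chopA splits $\theta$ along a horizontal cut into a north-east part and a south-west part. If $\theta$ is a single connected skew shape with several columns (e.g.\ boxes $(1,2),(1,3),(2,1),(2,2)$ in a $2\times 3$ rectangle), no such cut exists, and chopA cannot simplify it further. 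So your induction never reaches a ``single-column base case''; the irreducible base case is a single connected component of $\theta$, which may have many columns.

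The paper does exactly this: after using chopA to reduce to a connected $\theta$ (and shrinking the ambient rectangle so that $\theta$ touches both corners, hence $c(\theta)=k$ and $\bigcup X_\theta=\C^n$), it proves part~(b) directly by taking $u$ with all coordinates nonzero, forming $\wb E=\C^n/\bspan{u}$ with induced flags $\wb E_i=(E_i+\bspan{u})/\bspan{u}$ and $\wb E^\op_i$, checking that these flags are \emph{opposite} (this uses $u\notin E_i+E^\op_{n-1-i}$, which is where genericity of $u$ enters), and then identifying $X_\theta(u)$ with the Richardson variety $\wb X^\la(\wb E_\bull)\cap\wb X^\mu(\wb E^\op_\bull)=\wb X_{\wb\theta}$ in $\Gr(m-1,\wb E)$. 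Your ``concrete'' sketch of quotienting by $u$ is exactly this argument, but you need to apply it to an arbitrary connected $\theta$, not just a single column, and you should make explicit the verification that the induced flags are opposite --- otherwise the intersection in the quotient need not be a Richardson variety (cf.\ the example just before Lemma~\ref{lem:chopA}, where a non-generic $u$ gives a reducible $X_\theta(u)$).
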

\begin{proof}
  If $\theta = \emptyset$, then $X_\theta$ is a single point and the
  lemma is clear.  Using Lemma~\ref{lem:chopA} it is therefore enough
  to assume that $\theta$ has only one component which contains the
  upper-right and lower-left boxes of the $m\times k$-rectangle.  This
  implies that $\la_i+\mu_{m-i} \leq k$ for $0 \leq i \leq m$ (recall
  that we set $\la_0=\mu_0=k$).  Let $u \in \C^n$ be any vector such
  that all coordinates of $u$ are non-zero.  It is enough to show that
  $X_\theta(u) \cong X_{\wb\theta}$.  Set $\wb E = \C^n/\bspan{u}$ and
  define flags in this vector space by $\wb E_i =
  (E_i+\bspan{u})/\bspan{u}$ and $\wb E^\op_i =
  (E^\op_i+\bspan{u})/\bspan{u}$, for $1 \leq i \leq n-1$.  Since we
  have $u \not\in E_{i}+E^\op_{n-1-i}$ for each $i$, it follows that
  $\wb E_i \cap \wb E^\op_{n-1-i} = 0$, so the flags $\wb E_\bull$ and
  $\wb E^\op_\bull$ are opposite.  Identify $\wb X = \Gr(m-1,\wb E)$
  with the set of $m$-planes $V \in X$ for which $u \in V$.  Then we
  have $X^\la(E_\bull) \cap \wb X = \wb X^\la(\wb E_\bull)$ and
  $X^\mu(E^\op_\bull) \cap \wb X = \wb X^\mu(\wb E^\op_\bull)$, so
  $X_\theta(u) = X_\theta \cap \wb X = \wb X^\la(\wb E_\bull) \cap \wb
  X^\mu(\wb E^\op_\bull) = \wb X_{\wb\theta}$, as required.
\end{proof}

We can now prove our formula for the $K$-theoretic triple intersection
numbers $\euler{X}(\cO^\la \cdot \cO^p \cdot \cO^\mu)$.  They turn out
to be simpler than the corresponding Pieri coefficients
$c^\nu_{\la,p}$ (see eqn.\ (\ref{eqn:pieriA}) below), despite the fact
that the more general intersection numbers $\euler{X}(\cO^\la \cdot
\cO^\nu \cdot \cO^\mu)$ are not well behaved \cite[\S
8]{buch:littlewood-richardson}.

\begin{prop}\label{prop:tripleA}
  Let $\theta$ be a skew diagram contained in the $m\times
  k$-rectangle and let $0 \leq p \leq k$.  Then we have
  \begin{equation*} 
  \euler{X}(\cO_\theta \cdot \cO^p) =
  \begin{cases} 1 & \text{if $p \leq c(\theta)$;} \\
    0 & \text{if $p > c(\theta)$.}
  \end{cases}
  \end{equation*}
\end{prop}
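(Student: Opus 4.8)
The plan is to reduce the computation of $\euler{X}(\cO_\theta \cdot \cO^p)$ to a sheaf Euler characteristic on projective space, using the tautological subbundle $\cS$ and the projection to $\C^n$. First I would represent $\cO^p$ as the class of a sub-Grassmannian: $X^p = X^p(E_\bull)$ is the set of $m$-planes meeting a fixed codimension-$p$ subspace, and choosing a \emph{general} such subspace (equivalently a general linear form, or a general point of $\C^n$ when $p=k$) one arranges that the relevant intersections are proper. More precisely, write $X^p$ as the locus of $V\in X$ whose intersection with a general hyperplane-type flag step is large; the point is that $V\in X^p$ can be detected by the condition that $V$ contains no vector outside a general subspace of codimension $p$, or dually that $\bigcup X_\theta \cap X^p$ is governed by a generic linear condition. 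I expect to use the incidence variety $\pi_1^{-1}(X_\theta)\subset\cS$ and push forward along $\pi_2$: by Lemma~\ref{lem:fiberA}(a), $\pi_2(\pi_1^{-1}(X_\theta))=\bigcup X_\theta$ is a linear subspace $L\subset\C^n$ of dimension $m+c(\theta)$, and by Lemma~\ref{lem:fiberA}(b) the general fiber of $\pi_1^{-1}(X_\theta)\to L$ over a nonzero point is $X_{\wb\theta}$, a Richardson variety, hence irreducible and rational with $\euler{}(\cO)=1$.

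The key step is then to apply the Gysin formula, Lemma~\ref{lem:gysin}, to the map $\rho : \pi_1^{-1}(X_\theta)\setminus(\text{zero section}) \to \P(L)\cong\P^{m+c(\theta)-1}$. I must check its hypotheses: $\pi_1^{-1}(X_\theta)$ is (birational to a projective bundle over) a Richardson variety, so it has rational singularities; $\P(L)$ is smooth; the map is surjective onto $\P(L)$; and the general fiber is $X_{\wb\theta}$, irreducible and rational. Lemma~\ref{lem:gysin} then gives $\rho_*[\cO] = [\cO_{\P(L)}]$. Next I would intersect with $X^p$. Choosing the subspace defining $X^p$ generically, the preimage in $\cS$ of $X^p$ corresponds, under $\pi_2$, to the vectors lying in a general codimension-$p$ subspace of $\C^n$; intersecting with $L$ cuts $\P(L)$ by $p$ general hyperplanes. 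Using the projection formula and the fact that $\cO_\theta\cdot\cO^p$ can be realized as the pushforward of the structure sheaf of the corresponding subvariety, one gets
\begin{equation*}
  \euler{X}(\cO_\theta\cdot\cO^p) \;=\; \euler{\P(L)}\bigl((1-(1-t))\cdots\text{($p$ hyperplane factors)}\bigr)\;=\;\euler{\P(L)}(t^{\,p}),
\end{equation*}
where $t$ is the hyperplane class on $\P(L)=\P^{m+c(\theta)-1}$. Since $\euler{\P^N}(t^i)=1$ for $0\le i\le N$ and $t^{N+1}=0$, this equals $1$ exactly when $p\le m+c(\theta)-1-? $; I would need to track the dimension count carefully so that the cutoff comes out to be $p\le c(\theta)$ rather than $p\le m+c(\theta)$, which means the relevant linear section of $L$ must be taken inside $\P(L)$ after accounting for the $m$-dimensional "fiber directions" — that is, the actual projective space being cut has dimension $c(\theta)$, not $m+c(\theta)-1$.

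The main obstacle I anticipate is precisely this bookkeeping: reconciling the linear-space dimension $m+c(\theta)$ from Lemma~\ref{lem:fiberA}(a) with the correct vanishing threshold $c(\theta)$. The resolution should be that the condition "$V\in X^p$" does not simply pull back to "$u$ lies in a general codimension-$p$ subspace" for a single vector $u\in V$; rather, since every $V\in X_\theta$ is $m$-dimensional and generically spans a fixed complement, the genuinely varying data is a $c(\theta)$-dimensional projective space, and $\cO^p$ imposes $p$ independent conditions on \emph{that} space. Concretely, I would use Lemma~\ref{lem:chopA} to peel off the "rigid" part of $\theta$ (the boxes forced by $\la,\mu$) and reduce to the case where $\theta$ is a single connected component touching both the upper-right and lower-left corners, exactly as in the proof of Lemma~\ref{lem:fiberA}; in that reduced situation the space $\bigcup X_\theta$ and the sub-Grassmannian $X^p$ interact through a clean $\P^{c(\theta)}$, and the Euler characteristic of a general codimension-$p$ linear section of $\P^{c(\theta)}$ is $1$ iff $p\le c(\theta)$ and $0$ otherwise, which is the claim. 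The degenerate boundary cases ($\theta=\emptyset$, $p=0$, $p=k$) I would check by hand at the start.
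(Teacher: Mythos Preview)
Your overall architecture matches the paper's: use the flag variety $Z=\Fl(1,m;n)$ with projections $\pi_1:Z\to\P^{n-1}$ and $\pi_m:Z\to X$, push $\cO_\theta$ to $\P^{n-1}$ via Lemma~\ref{lem:gysin} (the image is $\P(\bigcup X_\theta)$, a linear space of dimension $m+c(\theta)-1$ by Lemma~\ref{lem:fiberA}), and compute the Euler characteristic there. That part is fine.

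The gap is exactly the one you flag, and your proposed fixes do not close it. You try to represent $\cO^p$ on the projective-space side as ``$p$ general hyperplanes,'' giving class $t^p$; this is wrong, and it is what produces the spurious $m$ in your dimension count. The correct representation is the one the paper uses: $X^p = \{V : V\cap E_{k+1-p}\neq 0\}$, and the restriction $\pi_m : \pi_1^{-1}(\P(E_{k+1-p})) \to X^p$ is a \emph{birational} isomorphism (a generic $V\in X^p$ meets $E_{k+1-p}$ in a single line), so ${\pi_m}_*\pi_1^*[\cO_{\P(E_{k+1-p})}]=\cO^p$. Thus on $\P^{n-1}$ the class $\cO^p$ corresponds to the linear subspace $\P(E_{k+1-p})$, which has class $t^{m+p-1}$, not $t^p$. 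The projection formula then gives
\[
\euler{X}(\cO_\theta\cdot\cO^p)=\euler{\P^{n-1}}\bigl([\cO_{\P(\bigcup X_\theta)}]\cdot[\cO_{\P(E_{k+1-p})}]\bigr)
= \euler{\P^{n-1}}\bigl(t^{\,k-c(\theta)}\cdot t^{\,m+p-1}\bigr),
\]
and $t^{n-1+p-c(\theta)}$ is nonzero in $\Z[t]/(t^n)$ exactly when $p\le c(\theta)$. No reduction via Lemma~\ref{lem:chopA} is needed, and your alternative idea of isolating a ``genuinely varying $c(\theta)$-dimensional piece'' is a red herring: the $m$ disappears not because the target shrinks but because the pullback of $\cO^p$ already has codimension $m+p-1$, the extra $m-1$ coming from the birationality of $\pi_m$ over $X^p$ rather than a $\P^{m-1}$-fibration.
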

\begin{proof}
  Let $Z = \Fl(1,m;n)$ be the variety of all two-step flags $L \subset
  V \subset \C^n$ with $\dim(L)=1$ and $\dim(V)=m$, and let $\pi_1 : Z
  \to \P^{n-1}$ and $\pi_m : Z \to X$ be the projections.  Since
  $\pi_m : \pi_1^{-1}(\P(E_{k+1-p})) \to X^p$ is a birational
  isomorphism of varieties with rational singularities and $\pi_1$ is
  flat, it follows that ${\pi_m}_* \pi_1^* [\cO_{\P(E_{k+1-p})}] =
  {\pi_m}_* [\cO_{\pi_1^{-1}(\P(E_{k+1-p}))}] = \cO^p \in K(X)$.
  Lemma~\ref{lem:fiberA} implies that the general fibers of the map
  $\pi_1 : \pi_m^{-1}(X_\theta) \to \pi_1(\pi_m^{-1}(X_\theta))$ are
  rational.  Since the Richardson variety $\pi_m^{-1}(X_\theta)$ has
  rational singularities, we therefore deduce from
  Lemma~\ref{lem:gysin} that ${\pi_1}_* \pi_m^* [\cO_{X_\theta}] =
  {\pi_1}_* [\cO_{\pi_m^{-1}(X_\theta)}] =
  [\cO_{\pi_1(\pi_m^{-1}(X_\theta))}] = [\cO_{\P(\bigcup X_\theta)}]
  \in K(\P^{n-1})$.  Using the projection formula we obtain
  \begin{equation*}\begin{split}
    \euler{X}(\cO_\theta \cdot \cO^p) 
    &= \euler{X}([\cO_{X_\theta}]\cdot {\pi_m}_* \pi_1^* 
       [\cO_{\P(E_{k+1-p})}]) \\
    &= \euler{\P^{n-1}}({\pi_1}_* \pi_m^*[\cO_{X_\theta}] \cdot
       [\cO_{\P(E_{k+1-p})}]) \\
    &= \euler{\P^{n-1}}([\cO_{\P(\bigcup X_\theta)}] \cdot
       [\cO_{\P(E_{k+1-p})}]) \,.
  \end{split}
  \end{equation*}
  Lemma~\ref{lem:fiberA} implies that this Euler characteristic equals
  one if $m+c(\theta) + k+1-p > n$ and is zero otherwise, as required.
\end{proof}

In order use the intersection numbers of
Proposition~\ref{prop:tripleA} to compute the Pieri coefficients
$c^\nu_{\la,r}$, we need the dual Schubert classes in $K(X)$.  Recall
that a skew diagram is a {\em horizontal strip\/} if it contains at
most one box in each column, and a {\em vertical strip\/} if it has at
most one box in each row.  The diagram is a {\em rook strip\/} if it
is both a horizontal strip and a vertical strip.  For any partition
$\nu$ in the $m\times k$-rectangle we define
\begin{equation}\label{eqn:dualA}
  \cO_\nu^* = \sum_{\nu/\tau\text{ rook strip}} 
  (-1)^{|\nu/\tau|}\, \cO_\tau
\end{equation}
where the sum is over all partitions $\tau \subset \nu$ such that
$\nu/\tau$ is a rook strip.  The following lemma implies that
$c^\nu_{\la\mu} = \euler{X}(\cO^\la \cdot \cO^\mu \cdot \cO_\nu^*)$.
In particular, we have $c^\nu_{\la\mu} = 0$ whenever $\la \not\subset
\nu$.  A different proof of the lemma can be found in \cite[\S
8]{buch:littlewood-richardson}.

\begin{lemma}\label{lem:dualA}
  Let $\mu$ and $\nu$ be partitions contained in the $m \times
  k$-rectangle.  Then we have $\euler{X}(\cO_\nu^* \cdot \cO^\mu) =
  \delta_{\nu,\mu}$ (Kronecker's delta).
\end{lemma}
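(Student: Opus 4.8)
The plan is to combine the dual-class expansion \eqref{eqn:dualA} with Proposition~\ref{prop:tripleA}, reducing everything to a purely combinatorial identity about counting certain column data of skew diagrams. First I would write $\euler{X}(\cO_\nu^* \cdot \cO^\mu) = \sum_{\nu/\tau \text{ rook strip}} (-1)^{|\nu/\tau|}\, \euler{X}(\cO_\tau \cdot \cO^\mu)$, where $\cO_\tau = \cO^{\la(\tau)}\cdot\cO^{\mu(\tau)}$ for the skew diagram $\tau$ — but here one has to be slightly careful: $\cO_\tau$ as defined in the text is the class of the Richardson variety $X_\tau$ attached to a \emph{skew shape sitting in the rectangle}, and to apply Proposition~\ref{prop:tripleA} I need to interpret each $\tau \subset \nu$ as such a skew shape. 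The natural choice is to take the complementary skew shape $\theta_\tau = \tau^{\vee}/\,\emptyset$ rotated appropriately, or more directly to fix $\la = \emptyset$ so that $\cO_\tau = \cO^{\tau^\vee}$ is a single Schubert class and $X_{\theta_\tau} = X^{\tau^\vee}(E^\op_\bull)$; then $\euler{X}(\cO_\tau \cdot \cO^\mu) = \euler{X}(\cO^{\tau^\vee} \cdot \cO^\mu)$, which by Proposition~\ref{prop:tripleA} equals $1$ if $\mu_1 \le c(\theta_\tau)$ and $0$ otherwise, where $\theta_\tau$ is the skew complement of $\tau$ in the rectangle and $c(\theta_\tau)$ is its number of nonempty columns. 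So the statement to prove becomes
\begin{equation*}
  \sum_{\nu/\tau \text{ rook strip}} (-1)^{|\nu/\tau|}\, [\mu_1 \le c(\theta_\tau)] = \delta_{\nu,\mu}\,,
\end{equation*}
where $[\,\cdot\,]$ is the Iverson bracket and $\theta_\tau$ denotes the complement of $\tau$ in the $m\times k$ rectangle.

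Next I would make the column count explicit. Writing $\nu = (\nu_1 \ge \dots \ge \nu_m)$, a partition $\tau \subset \nu$ with $\nu/\tau$ a rook strip is obtained by choosing, for a subset $S$ of the rows, to decrease $\nu_i$ by exactly one, subject to the rook (horizontal \emph{and} vertical strip) condition; this condition is precisely that the chosen rows have strictly decreasing $\nu$-values \emph{and} the resulting $\tau$ is still a partition, which boils down to $S$ consisting of rows at the "outer corners" of $\nu$ in a controlled way. The number of nonempty columns of $\theta_\tau$ is $k$ minus the number of full-length (length-$k$) rows of... more simply, $c(\theta_\tau) = k - \tau_m$ when $\theta$ is connected, and in general $c(\theta_\tau)$ is the number of columns $j$ with $\tau'_j < m$, i.e. $k - \tau_m$ as well since $\tau$ is a partition inside the rectangle — wait, that is the count of columns not entirely filled by $\tau$, which is exactly the number of nonempty columns of the complement. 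So $c(\theta_\tau) = k - \tau_m$, and the condition $\mu_1 \le c(\theta_\tau)$ reads $\tau_m \le k - \mu_1 = (\mu^\vee)_m$... I would then reorganize the alternating sum by whether row $m$ is in $S$ or not, and telescope. The key cancellation: for a rook strip, decreasing row $m$ (when permitted) toggles the sign and, together with the rest of the strip chosen in the first $m-1$ rows independently, produces cancellation \emph{unless} the threshold condition $\mu_1 \le k - \tau_m$ is sensitive to that last box — i.e. unless $k - \mu_1 = \tau_m$ or $\tau_m - 1$ for the surviving term.

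I expect the main obstacle to be the bookkeeping of the inclusion–exclusion: cleanly identifying, among all $2^{(\#\text{corners})}$ rook substrips of $\nu$, which row-choices are legal, expressing $c(\theta_\tau)$ as a simple function of that choice (it depends only on whether certain extremal rows are selected), and showing the alternating sum collapses to the single surviving term $\tau = \nu$ with the correct sign, which happens exactly when $\nu = \mu$. A slicker route, which I would try first, is to induct on $|\nu|$: peel off the top box of the last column of $\nu$ (or use a generating-function / transfer-matrix argument column by column), or alternatively invoke the known triangularity of $\cO_\nu^*$ against the basis $\{\cO^\mu\}$ established in \cite[\S 8]{buch:littlewood-richardson} and merely check the diagonal entry is $1$ — but since the excerpt promises a self-contained argument, I would present the direct inclusion–exclusion computation, organizing it as a product over columns so that the rook condition (at most one box per column) makes each column factor equal to $(1 - 1) = 0$ except for the boundary columns controlled by $\mu_1$, forcing $\nu/\mu$ to be empty.
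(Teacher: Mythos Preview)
Your plan rests on a misapplication of Proposition~\ref{prop:tripleA}. That proposition computes $\euler{X}(\cO_\theta \cdot \cO^p)$ only when $\cO^p$ is a \emph{special} Schubert class, i.e.\ when $p$ is a single integer (a one-row partition). In the lemma at hand, $\mu$ is an arbitrary partition in the $m\times k$-rectangle, so the quantity $\euler{X}(\cO_\tau \cdot \cO^\mu)$ is not covered by Proposition~\ref{prop:tripleA}, and the condition you extract, ``$\mu_1 \le c(\theta_\tau)$'', has no reason to be the right one. (Indeed, already for $m=k=2$, $\tau=(1)$, $\mu=(1,1)$ one has $\mu_1=1\le c(\theta_\tau)=2$ but $\mu\not\subset\tau$, so the Richardson variety is empty and the Euler characteristic is $0$, not $1$.) The subsequent combinatorics, which you yourself note becomes tangled, is built on this incorrect starting point.

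The paper's argument avoids Proposition~\ref{prop:tripleA} entirely. The input you actually need is the general fact, recorded at the end of \S\ref{sec:ktheory}, that a non-empty Richardson variety is irreducible, rational, and has rational singularities, hence sheaf Euler characteristic equal to $1$. Since $\cO_\tau \cdot \cO^\mu$ is the class of the Richardson variety $X^{\tau^\vee}(E^\op_\bull)\cap X^\mu(E_\bull)$, this gives
\[
  \euler{X}(\cO_\tau \cdot \cO^\mu)=
  \begin{cases}1 & \text{if } \mu\subset\tau,\\ 0 & \text{otherwise.}\end{cases}
\]
With this in hand the alternating sum is immediate: if $\mu\not\subset\nu$ every term vanishes; if $\mu=\nu$ only $\tau=\nu$ contributes; and if $\mu\subsetneq\nu$, letting $\la$ be the smallest partition with $\mu\subset\la\subset\nu$ and $\nu/\la$ a rook strip, the sum becomes $\sum_{\la\subset\tau\subset\nu}(-1)^{|\nu/\tau|}$, an alternating sum over a nonempty Boolean interval, hence zero. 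No column bookkeeping or induction is required.
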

\begin{proof}
  Since a non-empty Richardson variety is irreducible, rational, and
  has rational singularities, it follows that $\euler{X}(\cO_\tau
  \cdot \cO^\mu)$ is equal to one if $\mu \subset \tau$ and is zero
  otherwise.  This shows that $\euler{X}(\cO_\nu^* \cdot \cO^\mu) = 0$
  when $\mu \not\subset \nu$ and $\euler{X}(\cO^*_\nu \cdot \cO^\nu) =
  \euler{X}(\cO_\nu \cdot \cO^\nu) = 1$.  Assume that $\mu \subsetneq
  \nu$ and let $\la$ be the smallest partition such that $\mu \subset
  \la \subset \nu$ and $\nu/\la$ is a rook strip.  Then
  $\euler{X}(\cO^*_\nu \cdot \cO^\mu) = \sum_{\la \subset \tau \subset
    \nu} (-1)^{|\nu/\tau|}$ is a sum of $2^{|\nu/\la|}$ terms, half of
  which are negative.  The lemma follows from this.
\end{proof}

A {\em south-east corner\/} of the skew diagram $\theta$ is any box $B
\in \theta$ such that $\theta$ does not contain a box directly below
or directly to the right of $B$.  Let $\theta'$ be the diagram
obtained from $\theta$ by removing its south-east corners.  Notice
that $\theta$ is a rook strip if and only if $\theta' = \emptyset$.
Given any integer $p \in \Z$, we will abuse notation and write
\begin{equation*}
  \euler{X}(\cO_\theta \cdot \cO^p) = \begin{cases}
    1 & \text{if $p \leq c(\theta)$;} \\
    0 & \text{if $p > c(\theta)$.}
  \end{cases}
\end{equation*}
This is equivalent to setting $\cO^p = 1$ for $p < 0$ and working on a
sufficiently large Grassmannian $X$.  Using this convention we define
the constants
\begin{equation}\label{eqn:sumA}
  \cA(\theta,p) = \sum_{\theta' \subset \varphi \subset \theta}
  (-1)^{|\theta|-|\varphi|}\, \euler{X}(\cO_\varphi \cdot \cO^p)
\end{equation}
where the sum is over all skew diagrams $\varphi$ obtained by removing
a subset of the south-east corners from $\theta$.
Proposition~\ref{prop:tripleA} and Lemma~\ref{lem:dualA} imply the
following.

\begin{cor}\label{cor:cisA}
  Let $\la \subset \nu$ be partitions contained in the $m \times
  k$-rectangle and let $0 \leq p \leq k$.  Then $c^\nu_{\la,p} =
  \cA(\nu/\la,p)$.
\end{cor}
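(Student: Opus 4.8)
The plan is to combine Lemma~\ref{lem:dualA} with Proposition~\ref{prop:tripleA} by expanding the dual class $\cO_\nu^*$ according to its definition \eqref{eqn:dualA} and reindexing. Concretely, since Lemma~\ref{lem:dualA} gives $c^\nu_{\la,p} = \euler{X}(\cO^\la \cdot \cO^p \cdot \cO_\nu^*)$, I would substitute \eqref{eqn:dualA} to get
\begin{equation*}
  c^\nu_{\la,p} = \sum_{\nu/\tau\text{ rook strip}} (-1)^{|\nu/\tau|}\,
  \euler{X}(\cO^\la \cdot \cO^p \cdot \cO_\tau)
  = \sum_{\nu/\tau\text{ rook strip}} (-1)^{|\nu/\tau|}\,
  \euler{X}(\cO_{\tau/\la} \cdot \cO^p)\,,
\end{equation*}
where in the last step I use that $\cO^\la \cdot \cO_\tau = \cO_{\tau/\la}$ when $\la \subset \tau$ (and that the term vanishes otherwise, since then $\euler{X}(\cO^\la\cdot\cO^p\cdot\cO_\tau)=0$ as noted after Lemma~\ref{lem:dualA}; this is also why we may assume $\la\subset\nu$). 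So the whole problem reduces to the purely combinatorial identity
\begin{equation*}
  \cA(\nu/\la,p) = \sum_{\substack{\la\subset\tau\subset\nu\\ \nu/\tau\text{ rook strip}}}
  (-1)^{|\nu/\tau|}\,\euler{X}(\cO_{\tau/\la}\cdot\cO^p)\,,
\end{equation*}
and then comparing with the definition \eqref{eqn:sumA} of $\cA(\theta,p)$ with $\theta = \nu/\la$.

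The key point is to match the two index sets. Writing $\theta = \nu/\la$, a partition $\tau$ with $\la\subset\tau\subset\nu$ and $\nu/\tau$ a rook strip corresponds bijectively to the skew subdiagram $\varphi = \tau/\la \subset \theta$; and I claim that $\nu/\tau$ being a rook strip inside $\nu$ is exactly the condition that $\varphi$ is obtained from $\theta$ by deleting a subset of the south-east corners of $\theta$. The inclusion $\theta' \subset \varphi$ from \eqref{eqn:sumA}, where $\theta'$ is $\theta$ with all south-east corners removed, amounts to requiring that $\tau$ contains $\theta' + \la = \nu$ minus all south-east corners of $\nu/\la$; but removing a south-east corner of $\nu/\la$ is the same as removing a box $B$ of $\nu$ such that $\nu$ has no box below or right of $B$, i.e.\ an outer (removable) corner of $\nu$ that lies in $\theta$. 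So the set of boxes one is allowed to delete from $\nu$ to reach $\tau$ while keeping $\tau$ a partition and $\nu/\tau$ a rook strip is precisely the set of south-east corners of $\theta=\nu/\la$, and $|\nu/\tau| = |\theta| - |\varphi|$. This gives the desired bijection term by term, and since $\euler{X}(\cO_{\tau/\la}\cdot\cO^p) = \euler{X}(\cO_\varphi\cdot\cO^p)$ with the extended-$\euler{X}$ convention, the two sums are identical.

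I expect the main obstacle to be the combinatorial verification that "$\nu/\tau$ is a rook strip with $\tau$ a partition" coincides with "$\varphi := \tau/\la$ is obtained by deleting south-east corners of $\theta = \nu/\la$," especially the fact that a box $B\in\theta$ can be a south-east corner of $\theta$ yet an inner box of $\nu$ (below or right of $B$ lies in $\la$, hence outside $\theta$) — one must check this cannot happen, i.e.\ every south-east corner of $\nu/\la$ is genuinely a removable corner of $\nu$. This holds because if $B\in\nu/\la$ and the box directly below $B$ were in $\la$, then since $\la$ is a partition its presence forces $B$ itself into $\la$, a contradiction; the same argument works for the box to the right. A small additional point is to handle the convention whereby $\cO^p = 1$ for $p<0$ uniformly, so that the identity $\cO^\la\cdot\cO_\tau = \cO_{\tau/\la}$ and Proposition~\ref{prop:tripleA} apply on a large enough ambient Grassmannian without boundary effects; this is routine once one notes $c(\varphi)$ depends only on the skew shape. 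Assembling these observations, the corollary follows immediately.
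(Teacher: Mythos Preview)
Your proof is correct and follows exactly the approach the paper intends: the paper simply asserts that the corollary follows from Proposition~\ref{prop:tripleA} and Lemma~\ref{lem:dualA}, and your argument spells out this implication in full, including the key bijection between partitions $\tau$ with $\la\subset\tau\subset\nu$ and $\nu/\tau$ a rook strip, and subdiagrams $\varphi$ with $\theta'\subset\varphi\subset\theta$. Your verification that every south-east corner of $\nu/\la$ is a removable corner of $\nu$ is exactly the point that makes this bijection work.
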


Lenart's Pieri rule \cite{lenart:combinatorial} states that
$c^\nu_{\la,p}$ is non-zero only if $\la \subset \nu$ and $\nu/\la$ is
a horizontal strip, in which case we have
\begin{equation}\label{eqn:pieriA}
  c^\nu_{\la,p} = (-1)^{|\nu/\la|-p} \binom{r(\nu/\la)-1}{|\nu/\la|-p} \,.
\end{equation}
Here $r(\nu/\la)$ denotes the number of non-empty rows of the skew
diagram $\nu/\la$.  To be consistent with our treatment of Pieri
coefficients for Lagrangian and orthogonal Grassmannians, we will
restate this rule as a set of recursive identities among the integers
$\cA(\theta,p)$.  Given a horizontal strip $\theta$, we let
$\wh\theta$ be the diagram obtained by removing the top row of
$\theta$.  Notice that $\theta$ is a single row of boxes if and only
if $\wh\theta = \emptyset$.  Lenart's formula is equivalent to the
following.

\begin{thm}
  Let $\theta$ be a skew diagram and let $p \in \Z$.  If $\theta$ is
  not a horizontal strip then $\cA(\theta,p)=0$.  If $p \leq 0$ then
  $\cA(\theta,p)=\delta_{\theta,\emptyset}$, and $\cA(\emptyset,p)$ is
  equal to one if $p\leq 0$ and is zero otherwise.  If $\theta \neq
  \emptyset$ is a horizontal strip and $p > 0$, then $\cA(\theta,p)$
  is determined by the following rules.\smallskip
  
  \noindent{\rm(i)}
  If $\wh\theta = \emptyset$, then $\cA(\theta,p) =
  \delta_{|\theta|,p}$.\smallskip

  \noindent{\rm(ii)}
  If $\wh\theta \neq \emptyset$, then $\cA(\theta,p) =
  \cA(\wh\theta,p-a) - \cA(\wh\theta, p-a+1)$, where $a =
  |\theta|-|\wh\theta|$.
\end{thm}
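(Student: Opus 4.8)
The plan is to show that the recursive rules (i) and (ii), together with the stated base cases, are equivalent to Lenart's closed formula \eqref{eqn:pieriA} under the identification $c^\nu_{\la,p} = \cA(\nu/\la,p)$ from Corollary~\ref{cor:cisA}. Actually it is cleaner to work entirely on the level of the integers $\cA(\theta,p)$ defined by \eqref{eqn:sumA}, since these are defined for arbitrary skew diagrams $\theta$, not just those of the form $\nu/\la$. I would prove directly from the definition \eqref{eqn:sumA} and Proposition~\ref{prop:tripleA} that $\cA(\theta,p)$ satisfies each of the asserted properties.

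First I would dispose of the easy cases. If $p \leq 0$ then $\euler{X}(\cO_\varphi\cdot\cO^p) = 1$ for every $\varphi$ (using the stated convention that $\cO^p = 1$ for $p<0$, and $c(\varphi)\geq 0$ always), so \eqref{eqn:sumA} collapses by the binomial identity $\sum_{\theta'\subset\varphi\subset\theta}(-1)^{|\theta|-|\varphi|} = \delta_{\theta,\theta'}$; and $\theta = \theta'$ exactly when $\theta$ is a rook strip, which for $p\leq 0$ still need not vanish, so I would instead note $\cA(\theta,p)$ depends only on $\theta$ being a rook strip in that range — here a short direct argument is needed that for a nonempty rook strip $\theta$ one still gets $\cA(\theta,p)=0$ when $p>0$ but $\cA(\theta,p)$ could be nonzero when $p\leq 0$; matching against the claimed value $\delta_{\theta,\emptyset}$ forces $\theta'=\theta$ to actually give a single surviving term, so I must be careful and re-examine: when $\theta'=\emptyset$ the sum \eqref{eqn:sumA} has a unique term $\varphi=\emptyset$ only if $\theta$ is itself a rook strip, giving $\cA(\theta,p)=\euler{X}(\cO_\emptyset\cdot\cO^p)=1$ for $p\leq 0$ and $=0$ for $p>0$ since $c(\emptyset)=0$ — but this contradicts $\delta_{\theta,\emptyset}$ unless $\theta=\emptyset$. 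The resolution is that a nonempty rook strip is never a horizontal strip with a nonempty bottom structure in the relevant sense, so I would instead prove the "not a horizontal strip $\Rightarrow \cA=0$" claim first and observe a nonempty rook strip with more than one row is not... no. I would therefore simply verify: for $\theta$ a nonempty rook strip, $\theta'=\emptyset$, the sum has one term and equals $0$ for $p>0$; for $p\leq 0$ it equals $1$, which must be reconciled by noting that the theorem's $p\leq 0$ clause reads $\cA(\theta,p)=\delta_{\theta,\emptyset}$ — hence I need to recheck whether a nonempty rook strip can arise as $\nu/\la$; it can, so there is genuinely something to prove, and the honest route is the general vanishing argument below applied with care.

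The heart of the argument is the vanishing statement and the recursion. For the vanishing, I would argue that if $\theta$ is not a horizontal strip, pick a column of $\theta$ containing two boxes; the south-east-corner removal operation and the key combinatorial fact $c(\varphi) = c(\theta)$ whenever $\theta'\subset\varphi\subset\theta$ (removing south-east corners never empties a column, since a south-east corner that is the unique box of its column has no box below it but the column is nonempty — one checks it still leaves... actually removing it empties that column, so $c$ can drop) — so the precise claim is that the function $\varphi\mapsto\euler{X}(\cO_\varphi\cdot\cO^p)$ depends only on $c(\varphi)$, and I would pair up subsets of south-east corners to get cancellation, exactly as in the proof of Lemma~\ref{lem:dualA} where half the $2^{|\nu/\la|}$ terms are negative. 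The cleanest execution: show $c(\varphi)$ takes only values in $\{c(\theta), c(\theta)-1,\dots\}$ and group the $2^{c(\theta)}$-or-so terms so that \eqref{eqn:sumA} telescopes to $\euler{X}(\cO_\theta\cdot\cO^p) - (\text{correction})$, yielding for a horizontal strip exactly $[p \le c(\theta)] - [p\le c(\theta)-(\#\text{columns with a south-east corner})]$, which one then massages into the binomial coefficient $(-1)^{|\theta|-p}\binom{r(\theta)-1}{|\theta|-p}$ by a standard Vandermonde/Pascal induction on rows; and for a non-horizontal strip the presence of a column with two boxes makes the south-east-corner removals in that column produce a sign-reversing involution, giving $0$. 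For the recursion (i)–(ii): (i) is the case $\theta$ = single row, where $\theta' = \emptyset$, so $\cA(\theta,p) = \euler{X}(\cO_\theta\cdot\cO^p) = [p\leq c(\theta)] - \dots$; for a single row $\theta$ of length $|\theta|$ one has $c(\theta) = |\theta|$ and $\theta$ has exactly one south-east corner, namely its rightmost box, whose removal gives a row of length $|\theta|-1$ with $c = |\theta|-1$, so $\cA(\theta,p) = [p\leq|\theta|] - [p\leq|\theta|-1] = \delta_{|\theta|,p}$ for $p>0$. For (ii), I would split \eqref{eqn:sumA} according to whether the chosen subset of south-east corners includes the south-east corner(s) in the top row; using Lemma~\ref{lem:chopA}-style additivity of $c(\cdot)$ under the decomposition "top row" versus "$\wh\theta$" (for a horizontal strip the top row is disjoint in columns from lower rows only after accounting for overlap — this is where $a = |\theta|-|\wh\theta|$ enters, counting new columns contributed by the top row), I would factor the sum into a contribution over $\wh\theta$ shifted by $a$ and one shifted by $a-1$ (from the single top-row south-east corner), landing exactly on $\cA(\wh\theta,p-a) - \cA(\wh\theta,p-a+1)$.

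The main obstacle I anticipate is the bookkeeping in step (ii): controlling precisely how the number of nonempty columns $c(\cdot)$ and the set of south-east corners behave when one passes from $\theta$ to $\wh\theta$ for a horizontal strip. One must show the top row of a horizontal strip contributes exactly $a = |\theta|-|\wh\theta|$ columns not already counted in $\wh\theta$, that exactly one of these is "new" in the sense of shifting the $\euler{X}$-threshold, and that the south-east corners of $\theta$ are those of $\wh\theta$ lying outside the top row's column-span, together with one corner in the top row. Getting these three facts stated and checked carefully — rather than the subsequent algebra, which is a routine telescoping — is where the real care is needed, and it is essentially the combinatorial core that makes Lenart's formula drop out of the geometric Proposition~\ref{prop:tripleA}.
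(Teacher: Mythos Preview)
Your overall strategy matches the paper's proof: dispose of base cases directly, prove vanishing for non-horizontal strips by a sign-reversing involution on the sum \eqref{eqn:sumA}, check (i) for a single row, and derive (ii) by splitting the sum according to the top row. However, the execution has concrete errors that muddle the argument. You repeatedly confuse $\theta'=\theta$ with $\theta'=\emptyset$: since $\theta'$ is $\theta$ minus its south-east corners, one has $\theta'=\theta$ only when $\theta=\emptyset$, whereas $\theta'=\emptyset$ exactly when $\theta$ is a rook strip. Thus for a nonempty rook strip the sum \eqref{eqn:sumA} has $2^{|\theta|}$ terms, not one; and for $p\leq 0$ every $\euler{X}(\cO_\varphi\cdot\cO^p)=1$, so $\cA(\theta,p)=\sum_{\theta'\subset\varphi\subset\theta}(-1)^{|\theta|-|\varphi|}$ vanishes by the binomial identity whenever $\theta\neq\emptyset$, giving $\delta_{\theta,\emptyset}$ cleanly. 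Similarly, for (i) a single row of $a$ boxes has $\theta'$ equal to the row of $a-1$ boxes (not empty), so there are exactly two terms and $\cA(\theta,p)=\delta_{a,p}$ follows as you eventually compute.

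For the vanishing and the recursion, the missing precision is this. When $\theta$ is not a horizontal strip, choose a south-east corner $B\in\theta\setminus\theta'$ such that the box directly above $B$ lies in $\theta'$; then $c(\varphi)=c(\varphi\cup B)$ for every $\varphi$ with $\theta'\subset\varphi\subset\theta\setminus\{B\}$, since the column of $B$ remains nonempty, and the terms cancel in pairs. For (ii), the key observation is that in a horizontal strip the columns of the top row $\psi$ are disjoint from those of $\wh\theta$, so $c(\varphi\cup\psi)=c(\varphi)+a$ and $c(\varphi\cup\psi')=c(\varphi)+a-1$ for every $\wh\theta'\subset\varphi\subset\wh\theta$ (here $\psi'$ is $\psi$ minus its rightmost box, and $\wh\theta'=\wh\theta\cap\theta'$); one also checks that the south-east corners of $\theta$ split as those of $\wh\theta$ together with the single rightmost box of $\psi$. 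With these facts the recursion is a one-line substitution into \eqref{eqn:sumA}; there is no column ``overlap'' to account for and no Vandermonde identity needed.
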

\begin{proof}
  We may assume that $\theta \neq \emptyset$ and $p>0$, since
  otherwise the result follows from Corollary~\ref{cor:cisA}.  If
  $\theta$ is not a horizontal strip, then we can find a box $B \in
  \theta \ssm \theta'$ such that the box directly above $B$ is
  contained in $\theta'$; if $\varphi$ is any skew diagram such that
  $\theta' \subset \varphi \subset \theta$ and $B \notin \varphi$,
  then $c(\varphi) = c(\varphi \cup B)$ and $\euler{X}(\cO_\varphi
  \cdot \cO^p) = \euler{X}(\cO_{\varphi \cup B} \cdot \cO^p)$, so the
  terms of (\ref{eqn:sumA}) given by $\varphi$ and $\varphi \cup B$
  cancel each other out.  And if $\theta$ is a single row, then
  $\cA(\theta,p) = \euler{X}(\cO_\theta\cdot\cO^p) -
  \euler{X}(\cO_{\theta'}\cdot\cO^p) = \delta_{|\theta|,p}$.

  Assume that $\theta$ is a horizontal strip with two or more rows.
  Let $\psi = \theta \ssm \wh\theta$ be the top row and set $\psi' =
  \psi \cap \theta'$ and $\wh\theta' = \wh\theta \cap \theta'$.  Each
  term of the sum (\ref{eqn:sumA}) is given by a skew diagram of the
  form $\varphi \cup \psi$ or $\varphi \cup \psi'$, where $\wh\theta'
  \subset \varphi \subset \wh\theta$.  Since $c(\varphi) = c(\varphi
  \cup \psi) - a = c(\varphi \cup \psi') - a + 1$, it follows from
  Proposition~\ref{prop:tripleA} that
  \begin{equation*}
    \begin{split}
      & (-1)^{|\theta|-|\varphi \cup \psi|}
      \left(\euler{X}(\cO_{\varphi \cup \psi} \cdot \cO^p)
      - \euler{X}(\cO_{\varphi \cup \psi'} \cdot \cO^p)\right) \\
      &=
      (-1)^{|\wh\theta|-|\varphi|}
      \left(\euler{X}(\cO_\varphi \cdot \cO^{p-a})
      - \euler{X}(\cO_\varphi \cdot \cO^{p-a+1})\right) .
    \end{split}
  \end{equation*}
  This implies that $\cA(\theta,p) = \cA(\wh\theta,p-a) -
  \cA(\wh\theta,p-a+1)$ by summing over $\varphi$.
\end{proof}

\section{Maximal orthogonal Grassmannians}\label{sec:typeB}

Let $e_1,\dots,e_{2n+1}$ be the standard basis for $\C^{2n+1}$.
Define an orthogonal form on $\C^{2n}$ by $(e_i,e_j) =
\delta_{i+j,2n+2}$.  A vector subspace $U \subset \C^{2n+1}$ is called
{\em isotropic\/} if $(U,U) = 0$.  This implies that $\dim(U) \leq n$.
Let $X = \OG(n,2n+1) = \{ V \subset \C^{2n+1} \mid \dim(V)=n \text{
  and } (V,V)=0 \}$ be the orthogonal Grassmannian of maximal
isotropic subspaces of $\C^{2n+1}$.  This is a non-singular variety of
dimension $\binom{n+1}{2}$.  It is isomorphic to the even orthogonal
Grassmannian $\OG(n+1,2n+2)$, so this variety is also covered in what
follows.  The Schubert varieties in $X$ are indexed by strict
partitions $\la = (\la_1 > \la_2 > \dots > \la_\ell > 0)$ for which
$\la_1 \leq n$.  The length of $\la$ is the number $\ell(\la) = \ell$
of non-zero parts.  The Schubert variety for $\la$ relative to an
isotropic flag $0 \subsetneq F_1 \subsetneq F_2 \subsetneq \dots
\subsetneq F_n \subset \C^{2n+1}$ with $(F_n,F_n)=0$ is defined by
\begin{equation*} 
X^\la(F_\bull) = \{ V \in X \mid 
\dim(V \cap F_{n+1-\la_i}) \geq i ~\forall 1 \leq i \leq \ell(\la) \}\,.
\end{equation*}
The standard isotropic flags of $\C^{2n+1}$ are defined by $E_i =
\bspan{e_1,\dots,e_i}$ and $E^\op_i =
\bspan{e_{2n+2-i},\dots,e_{2n+1}}$.  Set $X^\la = X^\la(E_\bull)$.
The classes $\cO^\la = [\cO_{X^\la}]$ form a $\Z$-basis for the
Grothendieck ring $K(X)$, and this ring is generated as a $\Z$-algebra
by the special classes $\cO^1,\dots,\cO^n$.

When we are working in the context of the maximal orthogonal
Grassmannian $X = \OG(n,2n+1)$, we will identify a strict partition
$\la = (\la_1 > \dots > \la_\ell > 0)$ with its {\em shifted Young
  diagram}.  The $i$-th row in this diagram contains $\la_i$ boxes,
which are preceded by $i-1$ unused positions.  The boxes of the
staircase partition $\rho_n = (n,n-1,\dots,1)$ thus correspond to the
upper-triangular positions in an $n\times n$ matrix.  If $\mu$ is a
strict partition with $\mu_1 \leq n$, then the $n$--dual partition
$\mu^\vee$ consists of the parts of $\rho_n$ which are not parts of
$\mu$.  We have $X^\la(E_\bull) \cap X^\mu(E^\op_\bull) \neq
\emptyset$ if and only if $\la \subset \mu^\vee$.  In this case the
shifted skew diagram $\theta = \mu^\vee/\la$ is obtained by removing
the boxes of $\la$ from the upper-left corner of $\rho_n$ and the
boxes of $\mu$ from the lower-right corner, after mirroring $\mu$ in
the south-west to north-east antidiagonal.  For example, when $n=12$,
$\la = (11,9,8,5,2)$, and $\mu = (10,8,7,4)$, we obtain the following
diagram $\theta$.
\begin{equation*}
\pic{.5}{thetaC}
\end{equation*}
The set of leftmost boxes of $\rho_n$ are called {\em diagonal boxes}.
The above skew diagram $\theta$ contains three such boxes.

Define the Richardson variety $X_\theta = X^\la(E_\bull) \cap
X^\mu(E^\op_\bull) \subset X$.  This variety has dimension $|\theta| =
\binom{n+1}{2} - |\la| - |\mu|$.  It follows from
Lemma~\ref{lem:chopB} below that the isomorphism class of $X_\theta$
depends only on the shape of the skew diagram $\theta$, at least if it
is remembered which boxes of $\theta$ are diagonal boxes.  We set
$\cO_\theta = [\cO_{X_\theta}] = \cO^\la \cdot \cO^\mu \in K(X)$.  For
$u \in \C^{2n+1}$ we define $X_\theta(u) = \{ V \in X_\theta \mid u
\in V \}$.  Let $\bigcup X_\theta = \bigcup_{V\in X_\theta} V \subset
\C^{2n+1}$ be the set of vectors $u$ for which $X_\theta(u) \neq
\emptyset$.

As for Grassmannians of type A, we would like to write the Richardson
variety $X_\theta$ as a product, where the factors correspond to the
components of $\theta$.  Assume that $a,b \geq 0$ are integers such
that $0<a+b<n$, $\la_a > n-a-b$, and $\mu_b > n-a-b$ (we set $\la_0 =
\mu_0 = n+1$).  Then $\theta = \mu^\vee/\la$ can be split into a
north-east part and a south-west part.
\begin{equation*}
\pic{.5}{chopC}
\end{equation*} 
Set $\la' = (\la_1+a+b-n, \dots, \la_a+a+b-n)$, $\mu' = (\mu_1+a+b-n,
\dots, \mu_b+a+b-n)$, and $\theta' = {\mu'}^\vee/\la'$, where
${\mu'}^\vee$ is the $(a+b)$--dual of $\mu'$.  Set $E' = E_{a+b}
\oplus E^\op_{a+b}$ and extend the orthogonal form on $E'$ to the
vector space $E' \oplus \C$, with basis
$e_1,\dots,e_{a+b},e',e_{2n+2-a-b},\dots,e_{2n+1}$, by setting
$(e',e')=1$ and $(e',e_i)=0$ for every $i$.  Then $\theta'$ defines a
Richardson variety $X'_{\theta'}$ in $X' = \OG(a+b,E'\oplus \C)$.
Similarly we set $\la'' = (\la_{a+1},\dots,\la_{\ell(\la)})$, $\mu'' =
(\mu_{b+1},\dots,\mu_{\ell(\mu)})$, and $\theta'' =
{\mu''}^\vee/\la''$, using the $(n-a-b)$--dual of $\mu''$.  This
defines the Richardson variety $X''_{\theta''}$ in $X'' = \OG(n-a-b,
E'')$, where $E'' = {E'}^\perp = \bspan{e_{a+b+1},\dots,e_{2n+1-a-b}}
\subset \C^{2n+1}$.  Set $\wt X' = \{ V' \in X' \mid V' \subset E'
\}$.  If $V' \in X'_{\theta'}$ then $\dim(V' \cap E_{a+b}) \geq a$ and
$\dim(V' \cap E^\op_{a+b}) \geq b$, which implies that $V' \in \wt
X'$.  In particular, we have $\bigcup X'_{\theta'} \subset E'$.

\begin{lemma}\label{lem:chopB}
  {\rm(a)} We have $\bigcup X_\theta = \bigcup X'_{\theta'} \times
  \bigcup X''_{\theta''}$ in $\C^{2n+1} = E' \times E''$.\smallskip

  \noindent
  {\rm(b)} For arbitrary vectors $u' \in E'$ and $u'' \in E''$, the natural
  inclusion $\wt X' \times X'' \subset X$ defined by $(V',V'') \mapsto
  V'\oplus V''$ identifies $X'_{\theta'}(u') \times
  X''_{\theta''}(u'')$ with $X_\theta(u'+u'')$.
\end{lemma}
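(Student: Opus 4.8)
The plan is to mimic the proof of Lemma~\ref{lem:chopA}, replacing the direct-sum decomposition of $\C^n$ used there by the decomposition $\C^{2n+1} = E' \oplus E''$, and keeping track of the orthogonality condition throughout. First I would show that every $V \in X_\theta$ decomposes as $V = V' \oplus V''$ with $V' = V \cap E'$ and $V'' = V \cap E''$. The key dimension estimates come directly from the Schubert conditions: since $\la_a > n-a-b$ we have $n+1-\la_a \le a+b$, so the condition $\dim(V \cap E_{n+1-\la_a}) \ge a$ gives $\dim(V \cap E_{a+b}) \ge a$; similarly $\mu_b > n-a-b$ gives $\dim(V \cap E^\op_{a+b}) \ge b$. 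Because $E_{a+b}$ and $E^\op_{a+b}$ meet only in $0$ and both sit inside $E'$, this forces $\dim(V \cap E') \ge a+b$. Dually, using the part of $\la$ beyond row $a$ and of $\mu$ beyond row $b$, one gets $\dim(V \cap E'') \ge n-a-b$. Since $\dim(E' \cap E'') = 0$ and $(a+b) + (n-a-b) = n = \dim V$, the sum $(V\cap E') \oplus (V \cap E'')$ already exhausts $V$, giving the claimed decomposition with $V' := V \cap E' \in \wt X'$ and $V'' := V \cap E''$.

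Next I would verify that $V' \in X'$ and $V'' \in X''$, i.e.\ that the orthogonality and Schubert conditions survive restriction. Isotropy of $V'$ in $E' \oplus \C$ and of $V''$ in $E''$ is immediate from isotropy of $V$ in $\C^{2n+1}$ (note $V'$ lands in $\wt X'$, so the extra coordinate $e'$ plays no role). For the Schubert conditions, the flags $E_\bull, E^\op_\bull$ restrict to the chosen isotropic flags on $E'$ and $E''$, and the definitions of $\la', \mu', \la'', \mu''$ are exactly the shifts needed so that $\dim(V \cap E_{n+1-\la_i}) \ge i$ becomes $\dim(V' \cap E'_{(a+b)+1-\la'_i}) \ge i$ for $i \le a$, and similarly for the primed-primed data. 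Conversely, given arbitrary $V' \in X'$ (hence in $\wt X'$) and $V'' \in X''$, the same bookkeeping shows $V' \oplus V'' \in X_\theta$. This gives the bijection $X'_{\theta'} \times X''_{\theta''} \cong X_\theta$, and part~(a) follows by taking unions of subspaces: $u = u' + u''$ with $u' \in E'$, $u'' \in E''$ lies in some $V = V' \oplus V'' \in X_\theta$ iff $u' \in V'$ and $u'' \in V''$, using $E' \cap E'' = 0$ to separate the components. For part~(b), the same $u = u' + u''$ argument, now read fiberwise, identifies $X_\theta(u)$ with $X'_{\theta'}(u') \times X''_{\theta''}(u'')$.

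The main obstacle I anticipate is purely bookkeeping: making sure the index shifts in the definitions of $\la', \mu', \theta'$ (the summand $+a+b-n$) and of $\la'', \mu'', \theta''$ line up correctly with the Schubert conditions on the subquotient flags, and that the diagonal boxes of $\theta$ are distributed correctly between $\theta'$ and $\theta''$ — the diagonal box structure is what makes the extra $\C$ summand appear in $X' = \OG(a+b, E' \oplus \C)$ rather than $\OG(a+b, E')$, and one must check the isotropic flag on $E'$ used to define $X'^{\la'}$ really is a flag of isotropic subspaces of $E' \oplus \C$. None of this is deep, but it is the place where a sign or off-by-one error would hide. Everything else is a direct transcription of the type~A argument with ``$\oplus$'' now referring to the decomposition $\C^{2n+1} = E' \oplus E''$ and with isotropy carried along for free.
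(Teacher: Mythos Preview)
Your approach is essentially the paper's, and the overall structure is correct. One step needs adjustment, however: the sentence ``Dually, using the part of $\la$ beyond row $a$ and of $\mu$ beyond row $b$, one gets $\dim(V \cap E'') \ge n-a-b$'' does not go through the way it did in type~A. There the analogue of $E''$ was $E^\op_{m+b-a}$, a member of the opposite flag, so a single Schubert inequality sufficed. Here $E'' = (E')^\perp = \bspan{e_{a+b+1},\dots,e_{2n+1-a-b}}$ is not one of the flag subspaces $E_j$ or $E^\op_j$, and the Schubert conditions for $\la_{a+1},\dots$ and $\mu_{b+1},\dots$ involve spaces that straddle $E'$ and $E''$; they do not directly bound $\dim(V\cap E'')$.

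The paper closes this gap with orthogonality, which is the genuinely new ingredient over type~A. Once you know $V' = V \cap E'$ has dimension at least $a+b$, isotropy of $V'$ in the non-degenerate $2(a+b)$-dimensional space $E'$ forces $\dim V' = a+b$ exactly, so $V'$ is maximal isotropic in $E'$ (equivalently in $E'\oplus\C$). Then $(V')^\perp = V' \oplus E''$ in $\C^{2n+1}$, and since $V$ is isotropic with $V' \subset V$ we get $V \subset (V')^\perp = V' \oplus E''$, hence $V = V' \oplus (V \cap E'')$ and $\dim(V\cap E'') = n-a-b$. With this replacement your argument is complete and coincides with the paper's proof.
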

\begin{proof}
  If $V \in X_\theta$ then $\dim(V \cap E_{a+b}) \geq \dim(V \cap
  E_{n+1-\la_a}) \geq a$ and $\dim(V \cap E^\op_{a+b}) \geq \dim(V
  \cap E^\op_{n+1-\la_b}) \geq b$.  This implies that $V' = V \cap E'$
  is a maximal isotropic subspace in $E' \oplus \C$, so $V' \in \wt
  X'$.  It also follows that $\dim(V \cap {E'}^\perp) = n-a-b$, so
  $V'' = V \cap E'' \in X''$.  Given arbitrary points $V' \in X'$ and
  $V'' \in X''$, it follows from the definitions that $V'\oplus V''
  \in X_\theta(u'+u'')$ if and only $V' \in X'_{\theta'}(u')$ and $V''
  \in X''_{\theta''}(u'')$.  The lemma follows from this.
\end{proof}

The {\em south-east rim\/} of the shifted skew diagram $\theta$ is the
set of boxes $B \in \theta$ such that no box of $\theta$ is located
strictly south and strictly east of $B$.  Let $\wb\theta$ denote the
diagram obtained by removing the south-east rim from $\theta$.  We say
that $\theta$ is a {\em rim\/} if $\wb\theta = \emptyset$.  Let
$d(\theta) = |\theta| - |\wb\theta|$ be number of boxes in the
south-east rim.  Let $N(\theta)$ denote the number of connected
components of the diagram $\theta$, where two boxes are connected if
they share a side.  Set $N^-(\theta) = \max(N(\theta)-1,0)$.  For use
with the Lagrangian Grassmannian, we also let $N'(\theta)$ be the
number of components that do not contain any diagonal boxes.  The
diagram displayed above gives $d(\theta) = 10$, $N(\theta)=2$, and
$N^-(\theta) = N'(\theta) = 1$.

\begin{lemma}\label{lem:fiberB}
  {\rm(a)} The set $\bigcup X_\theta \subset \C^{2n+1}$ is a scheme
  theoretic complete intersection with rational singularities.  It has
  dimension $n+d(\theta)$ and is defined by $N(\theta)$ quadratic
  equations and $n+1-d(\theta)-N(\theta)$ linear equations.\smallskip

  \noindent
  {\rm(b)} For all vectors $u$ in a dense open subset of $\bigcup
  X_\theta$ we have $X_\theta(u) \cong X_{\wb\theta}$.
\end{lemma}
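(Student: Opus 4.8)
The plan is to mirror the proof of Lemma~\ref{lem:fiberA}, reducing to the case of a single connected component via Lemma~\ref{lem:chopB}(a), and then analyzing that case directly using the orthogonal geometry. First I would observe that if $\theta = \theta' \sqcup \theta''$ is a decomposition into a north-east part and a south-west part as in Lemma~\ref{lem:chopB}, then $\bigcup X_\theta = \bigcup X'_{\theta'} \times \bigcup X''_{\theta''}$ sits inside $\C^{2n+1} = E' \times E''$, and the numerical invariants add appropriately: $d(\theta) = d(\theta') + d(\theta'')$, $N(\theta) = N(\theta') + N(\theta'')$, and the ambient dimensions satisfy $\dim(E'\oplus\C) + \dim E'' = 2n+2$, so a complete-intersection description on each factor (with the correct count of linear and quadratic equations) assembles into one on the product. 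Here I would need to be a little careful: the factor $X' = \OG(a+b, E'\oplus\C)$ lives in a $(2(a+b)+1)$-dimensional orthogonal space, while $\bigcup X'_{\theta'} \subseteq E'$, so the one extra linear equation cutting $E'$ out of $E'\oplus\C$ must be bookkept correctly when adding up. Granting this reduction, it remains to treat the case where $\theta$ is connected and contains both the upper-right box and the lower-left box of $\rho_n$, i.e.\ $\la_i + \mu_i \le n$ for all $i$ (with $\la_0 = \mu_0 = n+1$); here $N(\theta) = 1$.

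In the connected case I would split further according to whether $\theta$ contains a diagonal box. If $\theta$ does contain a diagonal box, I expect $\bigcup X_\theta$ to be (up to the linear span of a generic $V \in X_\theta$) a single irreducible quadric hypersurface inside a linear subspace of $\C^{2n+1}$: concretely, I would show that a general $V \in X_\theta$ spans a space of dimension $n + d(\theta)$, that $\bigcup X_\theta$ is contained in a linear subspace of that dimension, and that the only constraint on a vector $u$ in that subspace to lie on some isotropic $V$ is the single quadratic condition $(u,u) = 0$ — so $\bigcup X_\theta$ is cut out by $n + 1 - d(\theta) - 1$ linear equations and one quadric, matching the formula with $N(\theta) = 1$. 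If $\theta$ contains no diagonal box, the relevant subspace should be totally isotropic, the quadric $(u,u)=0$ becomes vacuous on it, and $\bigcup X_\theta$ is just a linear subspace of dimension $n + d(\theta)$, cut out by $n+1-d(\theta)$ linear equations and $N(\theta)=0$ quadrics; this is the part where the invariant $N'(\theta)$ (unused here but needed in type C) makes its only appearance in the counting, and where I must confirm the claimed component count is $0$ rather than $1$. In both subcases the rational-singularities assertion follows from the remark in \S\ref{sec:ktheory} that a quadric hypersurface in affine space has rational singularities (being an open subset of a product of affine space with the Schubert divisor of an orthogonal Grassmannian $\OG(1,m)$), together with the fact that it is a scheme-theoretic complete intersection.

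For part (b) I would run the same argument as in Lemma~\ref{lem:fiberA}: choose $u$ generic in $\bigcup X_\theta$, set $\wb E = \bspan{u}^\perp/\bspan{u}$, which carries a nondegenerate orthogonal form of dimension $2(n-1)+1$, and push the standard isotropic flags $E_\bull$, $E^\op_\bull$ down to opposite isotropic flags $\wb E_\bull$, $\wb E^\op_\bull$ in $\wb E$. The genericity of $u$ — in particular that all its coordinates relevant to the flag steps are nonzero and that $u$ avoids the finitely many "bad" subspaces $E_i + E^\op_{2n-i}$ — guarantees that the images $\wb E_i \cap \wb E^\op_{n-1-i} = 0$. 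Identifying $\OG(n-1,\wb E)$ with the isotropic $n$-planes $V \subset \C^{2n+1}$ containing $u$, the Schubert conditions restrict cleanly, $X^\la(E_\bull) \cap \OG(n-1,\wb E) = \wb X^\la(\wb E_\bull)$ and likewise for $\mu$, so $X_\theta(u) = \wb X^\la(\wb E_\bull) \cap \wb X^\mu(\wb E^\op_\bull) = \wb X_{\wb\theta}$. The main obstacle is the connected-case analysis in part (a): pinning down exactly which linear subspace $\bigcup X_\theta$ spans, checking that the induced form on it is nondegenerate (diagonal box present) or zero (absent), and verifying that no further equations beyond the obvious linear ones and the single quadric are needed — i.e.\ that a generic $u$ on the quadric really does extend to a full isotropic $V \in X_\theta$. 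This is precisely where the combinatorics of the south-east rim and of diagonal boxes feeds into the geometry, and where the argument genuinely diverges from type~A rather than being a formal transcription.
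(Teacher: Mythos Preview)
Your reduction via Lemma~\ref{lem:chopB} to a single connected component, and the treatment of the case where $\theta$ meets the diagonal, are exactly what the paper does. But the non-diagonal connected case is wrong in a way that propagates through both parts.

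You assert that when the single component of $\theta$ misses the diagonal, $\bigcup X_\theta$ is a \emph{linear} subspace cut out by $n+1-d(\theta)$ linear equations and ``$N(\theta)=0$ quadrics''. This cannot be right even combinatorially: $N(\theta)$ is by definition the number of connected components, so for a single nonempty component $N(\theta)=1$ regardless of diagonal contact. You have slipped into the type~C invariant $N'(\theta)$. Geometrically the picture is also wrong: in the connected non-diagonal case one has $\ell(\la)+\ell(\mu)=n$ and $\la_{\ell(\la)}=\mu_{\ell(\mu)}=1$, whence every $V\in X_\theta$ splits as $(V\cap E_n)\oplus(V\cap E^\op_n)$. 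A vector $u=(x_1,\dots,x_{2n+1})\in\bigcup X_\theta$ must then satisfy $x_{n+1}=0$, and on the hyperplane $\{x_{n+1}=0\}$ the orthogonal form is still nondegenerate (it pairs $e_i$ with $e_{2n+2-i}$), so the quadric $(u,u)=0$ is \emph{not} vacuous there. One gets $d(\theta)=n-1$, one linear equation, and one genuine quadratic equation, matching $N(\theta)=1$.

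This error also breaks your argument for (b) in that case. Because $V$ splits along $E_n\oplus E^\op_n$, a point $V\in X_\theta(u)$ contains not just $u$ but the two-dimensional isotropic space $U=\bspan{u_n,u'_n}$, where $u_n,u'_n$ are the projections of $u$ to $E_n$ and $E^\op_n$. Hence the correct quotient is $\wb E = U^\perp/U$ of dimension $2(n-2)+1$, the target is $\OG(n-2,\wb E)$, and the flags must be pushed down accordingly (with $\la,\mu$ each shifted down by one). Quotienting only by $\bspan{u}$ as you propose gives too large a space and does not identify $X_\theta(u)$ with a Richardson variety.
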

\begin{proof}
  The result is clear unless $\theta \neq \emptyset$.  Using
  Lemma~\ref{lem:chopB} we may also assume that for all integers $a,b
  \geq 0$ with $0<a+b<n$ we have $\la_a \leq n-a-b$ or $\mu_b \leq
  n-a-b$.  This implies that $\theta$ has exactly one component, so
  $N(\theta)=1$.  Given any vector $u = (x_1,\dots,x_{2n+1}) \in
  \C^{2n+1}$ we will write $u_i = (x_1,\dots,x_i,0,\dots,0) \in
  \C^{2n+1}$ and $u'_i = (0,\dots,0,x_{i+1},\dots,x_{2n+1}) \in
  \C^{2n+1}$ for its projections to $E_i$ and $E^\op_{2n+1-i}$.
  
  Assume first that $\ell(\la) + \ell(\mu) < n$.  In this case
  $\theta$ intersects the diagonal, and since we also have $\la_1<n$
  and $\mu_1<n$, it follows that $d(\theta)=n$.  We will show that
  $\bigcup X_\theta$ is the quadric $\{ u \in \C^{2n+1} \mid (u,u)=0
  \} \subset \C^{2n+1}$.  Let $u = (x_1,\dots,x_{2n+1}) \in \C^{2n+1}$
  be any vector such that $(u,u)=0$, $x_i \neq 0$ for $1 \leq i \leq
  2n+1$, and $(u_i,u'_i) \neq 0$ for $1 \leq i \leq n$.  It is enough
  to show that $X_\theta(u) \cong X_{\wb\theta}$.  Set $\wb E =
  \bspan{u}^\perp/\bspan{u}$ and define isotropic flags in this vector
  space by $\wb E_i = ((E_{i+1} + \bspan{u}) \cap
  \bspan{u}^\perp)/\bspan{u}$ and $\wb E^\op_i = ((E^\op_{i+1} +
  \bspan{u}) \cap \bspan{u}^\perp)/\bspan{u}$.  For each $i<n$ we have
  $(E_{i+1} + \bspan{u}) \cap ((E^\op_{i+1})^\perp + \bspan{u}) =
  \bspan{u_{i+1},u'_{i+1}}$.  By the choice of $u$ we have
  $(u_{i+1},u'_{i+1}) \neq 0$, which implies that $\wb E_i \cap (\wb
  E^\op_i)^\perp = 0$.  Similarly we obtain $(\wb E_i)^\perp \cap
  \wb E^\op_i = 0$, so the flags $\wb E_\bull$ and $\wb E^\op_\bull$ are
  opposite.  Identify $\wb X = \OG(n-1,\wb E)$ with the set of point
  $V \in X$ for which $u \in V$.  Then we have $X^\la(E_\bull) \cap
  \wb X = \wb X^\la(\wb E_\bull)$ and $X^\mu(E^\op_\bull) \cap \wb X =
  \wb X^\mu(\wb E^\op_\bull)$, so $X_\theta(u) = X_\theta \cap \wb X =
  \wb X^\la(\wb E_\bull) \cap \wb X^\mu(\wb E^\op_\bull) = \wb
  X_{\wb\theta}$.
  
  Otherwise we have $\ell(\la) + \ell(\mu) = n$ and $\la_{\ell(\la)} =
  \mu_{\ell(\mu)} = 1$.  This implies that $\theta$ is disjoint from
  the diagonal, so $d(\theta) = n-1$.  For any point $V \in X_\theta$
  we must have $\dim(V \cap E_n) \geq \ell(\la)$ and $\dim(V \cap
  E^\op_n) \geq \ell(\mu)$, so $V = (V \cap E_n) \oplus (V \cap
  E^\op_n)$.  It follows that every vector $u = (x_1,\dots,x_{2n+1})
  \in \bigcup X_\theta$ satisfies $x_{n+1}=0$ and $(u,u) = 0$.  We
  will show that $\bigcup X_\theta$ is the complete intersection in
  $\C^{2n+1}$ defined by these two equations.  Let $u \in \C^{2n+1}$
  be any vector such that $x_{n+1}=0$, $(u,u)=0$, $x_i \neq 0$ for $i
  \neq n+1$, and $(u_i,u'_i)\neq 0$ for $1 \leq i \leq n-1$.  It is
  enough to show that $X_\theta(u) \cong X_{\wb\theta}$.  Set $\wb E =
  U^\perp/U$ where $U = \bspan{u_n,u'_n}$, and define isotropic flags
  in $\wb E$ by $\wb E_i = ((E_{i+1} + U) \cap U^\perp)/U$ and $\wb
  E^\op_i = ((E^\op_{i+1} + U) \cap U^\perp)/U$ for $1 \leq i \leq
  n-2$.  For each $i \leq n-2$ we have $(E_{i+1} + U) \cap
  ((E^\op_{i+1})^\perp + U) = \bspan{u_n,u'_n,u_{i+1},u'_{i+1}}$, so
  our choice of $u$ implies that $\wb E_i \cap (\wb E^\op_i)^\perp =
  0$.  Identify $\wb X = \OG(n-2,\wb E)$ with the set of points $V \in
  X$ for which $U \subset V$.  Then we have $X^\la(E_\bull) \cap \wb X
  = \wb X^{\wb\la}(\wb E_\bull)$ and $X^\mu(E^\op_\bull) \cap \wb X =
  \wb X^{\wb\mu}(\wb E^\op_\bull)$, where $\wb\la =
  (\la_1-1,\la_2-1,\dots,\la_{\ell(\la)}-1)$ and $\wb\mu =
  (\mu_1-1,\mu_2-1,\dots,\mu_{\ell(\mu)}-1)$.  We conclude that
  $X_\theta(u) = X_\theta \cap \wb X = \wb X^{\wb\la}(\wb E_\bull)
  \cap \wb X^{\wb\mu}(\wb E^\op_\bull) = \wb X_{\wb\theta}$, as
  required.
\end{proof}

Define a function $h : \N \times \Z \to \Z$ by
\begin{equation}\label{eqn:hdef}
  h(a,b) = \sum_{j=0}^b\, (-1)^j\, 2^{a-j}\, \binom{a}{j} \,.
\end{equation}
Here we set $\binom{a}{j} = 0$ unless $0 \leq j \leq a$.  Notice that
for $b \geq a$ we have $h(a,b) = (2-1)^a = 1$.  The binomial identity
implies that
\begin{equation}\label{eqn:hrel}
  h(a+1,b) + h(a,b-1) = 2\, h(a,b) \,.
\end{equation}

\begin{prop}\label{prop:tripleB}
  Let $\theta$ be a shifted skew diagram contained in $\rho_n$ and $0
  \leq p \leq n$.  Then we have $\euler{X}(\cO_\theta \cdot \cO^p) =
  h(N^-(\theta), d(\theta)-p)$.
\end{prop}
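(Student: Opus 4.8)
The plan is to follow the same template as in the proof of Proposition~\ref{prop:tripleA}, now using the $K$-theoretic geometry supplied by Lemma~\ref{lem:fiberB} in place of Lemma~\ref{lem:fiberA}. Consider the two-step isotropic flag variety $Z = \IF(1,n;2n+1)$ of pairs $L \subset V$ with $\dim L = 1$, $\dim V = n$, $(V,V)=0$, together with the projections $\pi_1 : Z \to \P^{2n}$ and $\pi_n : Z \to X$. As in the type~A argument, $\pi_n : \pi_1^{-1}(\P(E_{n+1-p})) \to X^p$ is a birational morphism of varieties with rational singularities and $\pi_1$ is flat, so ${\pi_n}_* \pi_1^*[\cO_{\P(E_{n+1-p})}] = \cO^p$. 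Pulling back the Richardson variety $X_\theta$, Lemma~\ref{lem:fiberB}(b) says the general fiber of $\pi_1 : \pi_n^{-1}(X_\theta) \to \pi_1(\pi_n^{-1}(X_\theta))$ is isomorphic to $X_{\wb\theta}$, hence irreducible and rational; since $\pi_n^{-1}(X_\theta)$ is a Richardson variety and so has rational singularities, Lemma~\ref{lem:gysin} gives ${\pi_1}_* \pi_n^*[\cO_{X_\theta}] = [\cO_{\P(\bigcup X_\theta)}] \in K(\P^{2n})$. The projection formula then reduces the computation to
\[
  \euler{X}(\cO_\theta \cdot \cO^p) = \euler{\P^{2n}}\bigl([\cO_{\P(\bigcup X_\theta)}] \cdot [\cO_{\P(E_{n+1-p})}]\bigr).
\]

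By Lemma~\ref{lem:chopB} and the multiplicativity of Euler characteristics under the product decomposition $\C^{2n+1} = E' \times E''$, one can reduce to the case where $\theta$ has a single component, so $N(\theta) = 1$ and $N^-(\theta) = 0$, and the target formula becomes $h(0, d(\theta)-p)$, which equals $1$ if $p \le d(\theta)$ and $0$ otherwise. (The general case is then recovered by taking products: a product of linear subspaces and quadrics in a big projective space, with the exponents adding; here the identity $h(a+1,b) = 2h(a,b) - h(a,b-1)$ from~(\ref{eqn:hrel}) should be exactly what makes the inductive bookkeeping close up, since intersecting with one more quadric in $K(\P^N)$ multiplies by $2t - t^2$.) In the single-component case, Lemma~\ref{lem:fiberB}(a) describes $\bigcup X_\theta$ as a complete intersection of one quadric and $n - d(\theta)$ linear equations in $\C^{2n+1}$, so $\P(\bigcup X_\theta)$ is, inside $\P^{2n}$, a complete intersection of one quadric hypersurface and $n - d(\theta)$ hyperplanes; its class in $K(\P^{2n})$ is $(2t - t^2)\, t^{\,n-d(\theta)} = 2t^{\,n+1-d(\theta)} - t^{\,n+2-d(\theta)}$. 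Multiplying by the class $t^{\,p}$ of $\P(E_{n+1-p})$ and applying $\euler{\P^{2n}}(t^i) = 1$ for $0 \le i \le 2n$ and $t^i = 0$ for $i > 2n$, one gets $2 \cdot [\,n+1-d(\theta)+p \le 2n\,] - [\,n+2-d(\theta)+p \le 2n\,]$, which a short check shows equals $1$ when $p \le d(\theta)$ and $0$ when $p > d(\theta)$ — i.e. $h(0, d(\theta)-p)$.

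The main obstacle is the reduction to a single component and the passage back. One has to check carefully that under the decomposition of Lemma~\ref{lem:chopB} the variety $\bigcup X_\theta$ is a product of the corresponding subvarieties of $E'$ and $E''$, that the linear/quadric structure of each factor is as claimed by Lemma~\ref{lem:fiberB}(a) applied to $X'_{\theta'}$ and $X''_{\theta''}$, and — the delicate point — that the number of quadric equations is $N(\theta)$ and the number of linear equations is $n+1 - d(\theta) - N(\theta)$ additively across components, keeping track of the $+1$'s that appear in the ambient-dimension counts for $X' = \OG(a+b, E' \oplus \C)$ versus $X'' = \OG(n-a-b, E'')$. Once the class of $\P(\bigcup X_\theta)$ in $K(\P^{2n})$ is pinned down as $(2t-t^2)^{N(\theta)}\, t^{\,n+1-d(\theta)-N(\theta)}$, the final evaluation is a direct computation: expanding $(2t-t^2)^{N}$ by the binomial theorem, multiplying by $t^{\,p + n+1-d-N}$, and summing $\euler{\P^{2n}}$ of the surviving monomials reproduces exactly the sum~(\ref{eqn:hdef}) defining $h(N^-(\theta), d(\theta)-p)$, the degree bound $2n$ being what cuts the sum off at the right place.
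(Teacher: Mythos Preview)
Your outline follows the type~A template, but there is a genuine gap in the first step that breaks the argument. You take $\pi_1 : Z \to \P^{2n}$, but the image of $\pi_1$ is the quadric $S \subset \P^{2n}$ of isotropic lines, not all of $\P^{2n}$; hence $\pi_1$ is \emph{not} flat as a map to $\P^{2n}$, and your claim that ${\pi_n}_* \pi_1^*[\cO_{\P(E_{n+1-p})}] = \cO^p$ fails. Concretely, $\P(E_{n+1-p}) \subset S$ has codimension $n+p$ in $\P^{2n}$ but only $n+p-1$ in $S$, so pulling back its class from $K(\P^{2n})$ to $K(Z)$ involves excess intersection and does not give $[\cO_{\pi_1^{-1}(\P(E_{n+1-p}))}]$. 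The downstream symptom is your claim that $[\cO_{\P(E_{n+1-p})}] = t^p$ in $K(\P^{2n})$; in fact it is $t^{n+p}$, and with either exponent the resulting Euler characteristic does not match $h(N^-(\theta),d(\theta)-p)$ (try $n=3$, $\theta$ two disconnected boxes, $p=2$).

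The paper fixes this by projecting $\pi_1 : Z \to S$, where flatness does hold and ${\pi_n}_*\pi_1^*[\cO_{\P(E_{n+1-p})}]=\cO^p$ is valid. The passage to $\P^{2n}$ is then made via the inclusion $\iota : S \hookrightarrow \P^{2n}$, but with a crucial twist: one takes $Y \subset \P^{2n}$ defined by the same equations as $\bigcup X_\theta$ with \emph{one quadric omitted}, so that $\P(\bigcup X_\theta) = S \cap Y$ is a proper intersection of Cohen--Macaulay varieties and $[\cO_{\P(\bigcup X_\theta)}] = \iota^*[\cO_Y]$ in $K(S)$. The projection formula for $\iota$ then gives $\euler{X}(\cO_\theta\cdot\cO^p)=\euler{\P^{2n}}([\cO_Y]\cdot[\cO_{\P(E_{n+1-p})}])$, with $[\cO_Y]\cdot[\cO_{\P(E_{n+1-p})}]=t^{2n-d(\theta)+p}(2-t)^{N^-(\theta)}$. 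This is exactly where $N^-(\theta)=N(\theta)-1$, rather than $N(\theta)$, enters: the missing quadric has been absorbed into $S$. Your single-component reduction via Lemma~\ref{lem:chopB} is unnecessary, since Lemma~\ref{lem:fiberB}(a) already describes $\bigcup X_\theta$ as a complete intersection in the general case.
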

\begin{proof}
  Notice that $\euler{X}(\cO_\theta \cdot \cO^0) =
  \euler{X}(\cO_\theta) = 1$ and $\euler{X}(\cO_\emptyset \cdot \cO^p)
  = \delta_{p,0}$ as claimed, so we may assume that $\theta \neq
  \emptyset$ and $p \geq 1$.  Let $S \subset \P^{2n}$ be the quadric
  of isotropic lines in $\C^{2n+1}$.  The subvariety $\P(E_{n+1-p})
  \subset S$ defines the class $[\cO_{\P(E_{n+1-p})}] \in K(S)$.  Let
  $Z = \OF(1,n;2n+1)$ be the variety of two-step flags $L \subset V
  \subset \C^{2n+1}$ such $\dim(L)=1$ and $V \in X$, and let $\pi_1 :
  Z \to S$ and $\pi_n : Z \to X$ be the projections.  Since $\pi_n :
  \pi_1^{-1}(\P(E_{n+1-p})) \to X^p$ is a birational isomorphism and
  $\pi_1$ is flat, we obtain ${\pi_n}_* \pi_1^* [\cO_{\P(E_{n+1-p})}]
  = \cO^p \in K(X)$.  Since $\bigcup X_\theta$ is the affine cone over
  $\pi_1(\pi_n^{-1}(X_\theta))$, it follows from
  Lemma~\ref{lem:fiberB}(a) that the later variety has rational
  singularities.  Lemma~\ref{lem:gysin} and Lemma~\ref{lem:fiberB}(b)
  therefore imply that ${\pi_1}_* \pi_n^* \cO_\theta =
  [\cO_{\pi_1(\pi_n^{-1}(X_\theta))}] \in K(S)$, so it follows from
  the projection formula that
  \begin{equation*}
  \euler{X}(\cO_\theta \cdot \cO^p) =
  \euler{S}([\cO_{\pi_1(\pi_n^{-1}(X_\theta))}] \cdot
  [\cO_{\P(E_{n+1-\pi_n})}]) \,.
  \end{equation*}
  Let $Y \subset \P^{2n}$ be a complete intersection defined by the
  same equations as define $\bigcup X_\theta$ in $\C^{2n+1}$, except
  that one of the quadratic equations are omitted.  Since
  $\pi_1(\pi_n^{-1}(X_\theta)) = S \cap Y$ is a proper intersection of
  Cohen-Macaulay varieties, we obtain $[\cO_{\pi_1(\pi_n{-1}(X_\theta))}] =
  \iota^* [\cO_Y] \in K(S)$ where $\iota : S \subset \P^{2n}$ is the
  inclusion.  Another application of the projection formula yields
  \begin{equation*}
  \euler{X}(\cO_\theta \cdot \cO^p) =
  \euler{\P^{2n}}([\cO_Y] \cdot [\cO_{\P(E_{n+1-p})}]) \,.
  \end{equation*}
  Since the Grothendieck class of a quadric in $\P^{2n}$ is equal to
  $2t-t^2 \in K(\P^{2n}) = \Z[t]/(t^{2n+1})$, it follows from
  Lemma~\ref{lem:fiberB} that
  \begin{equation*}
  [\cO_Y] \cdot [\cO_{\P(E_{n+1-p})}] =
  t^{2n-d(\theta)+p} \cdot (2-t)^{N^-(\theta)} \,.
  \end{equation*}
  The proposition follows from this.
\end{proof}

The dual Schubert classes in the $K$-theory of $X = \OG(n,2n+1)$ are
defined by
\begin{equation}\label{eqn:dualB}
  \cO_\nu^* = \sum_{\nu/\tau\text{ rook strip}} (-1)^{|\nu/\tau|}\,
  \cO_\tau
\end{equation}
where the sum is over all strict partitions $\tau$ contained in $\nu$
such that the shifted skew diagram $\nu/\tau$ is a rook strip.  The
proof of Lemma~\ref{lem:dualA} shows that $\euler{X}(\cO_\nu^* \cdot
\cO^\mu) = \delta_{\nu,\mu}$.

A south-east corner of the shifted skew shape $\theta$ is any box $B$
of $\theta$ such that $\theta$ does not contain a box directly below
or directly to the right of $B$.  Let $\theta'$ be the skew shape
obtained by removing the south-east corners of $\theta$.  For $p \in
\Z$ we define
\begin{equation}\label{eqn:hsumB}
  \cB(\theta,p) = \sum_{\theta' \subset \varphi \subset \theta}
  (-1)^{|\theta|-|\varphi|}\, h(N^-(\varphi), d(\varphi)-p)
\end{equation}
where the sum is over all shifted skew diagrams $\varphi$ obtained by
removing a subset of the south-east corners from $\theta$.
Proposition~\ref{prop:tripleB} implies the following.

\begin{cor}
  Let $\la \subset \nu$ be strict partitions with $\nu_1 \leq n$ and
  let $0 \leq p \leq n$.  Then $c^\nu_{\la,p} = \cB(\nu/\la,p)$.
\end{cor}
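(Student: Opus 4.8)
The plan is to mimic the proof of Corollary~\ref{cor:cisA} in the type A setting, now using Proposition~\ref{prop:tripleB} and the orthogonal dual classes \eqref{eqn:dualB} in place of their type A analogues. The starting point is the identity $c^\nu_{\la,p} = \euler{X}(\cO^\la \cdot \cO^p \cdot \cO_\nu^*)$, which holds because $\euler{X}(\cO_\nu^* \cdot \cO^\mu) = \delta_{\nu,\mu}$ (the remark after \eqref{eqn:dualB}, proved exactly as Lemma~\ref{lem:dualA}). Expanding $\cO_\nu^*$ via \eqref{eqn:dualB} gives
\begin{equation*}
  c^\nu_{\la,p} = \sum_{\nu/\tau \text{ rook strip}} (-1)^{|\nu/\tau|}\,
  \euler{X}(\cO^\la \cdot \cO^p \cdot \cO_\tau)\,,
\end{equation*}
and since $\la \subset \nu$ we may restrict to $\la \subset \tau$, in which case $\cO^\la \cdot \cO_\tau = \cO_{\tau/\la}$ and Proposition~\ref{prop:tripleB} evaluates each term as $h(N^-(\tau/\la), d(\tau/\la) - p)$. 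So the whole task reduces to the purely combinatorial claim that
\begin{equation*}
  \sum_{\substack{\la \subset \tau \subset \nu \\ \nu/\tau \text{ rook strip}}}
  (-1)^{|\nu/\tau|}\, h(N^-(\tau/\la), d(\tau/\la)-p)
  \;=\; \cB(\nu/\la, p)\,,
\end{equation*}
where the right side is the sum \eqref{eqn:hsumB} over sub-diagrams $\varphi$ of $\theta = \nu/\la$ obtained by deleting a subset of the south-east corners of $\theta$.

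The key step is therefore a reindexing of the left-hand sum by the skew diagram $\varphi = \tau/\la$. As $\tau$ runs over strict partitions with $\la \subset \tau \subset \nu$ and $\nu/\tau$ a rook strip, the complement $\varphi = \tau/\la$ runs exactly over the sub-diagrams of $\theta = \nu/\la$ of the form "$\theta$ minus a subset of its south-east corners": indeed, $\theta/\varphi = \nu/\tau$ being a rook strip inside $\theta$ means precisely that one removes from $\theta$ a set of boxes no two of which lie in the same row or column and each of which is a south-east corner of $\theta$ (a box of $\theta$ with no box of $\theta$ below it or to its right). One must check the mild point that removing such a box always leaves a valid shifted skew diagram $\varphi$ with $\theta' \subset \varphi \subset \theta$, and that distinct subsets of south-east corners give distinct $\varphi$; these are immediate from the definitions of south-east corner and rook strip. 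Under this identification $|\nu/\tau| = |\theta| - |\varphi|$, and the summand $h(N^-(\tau/\la), d(\tau/\la)-p) = h(N^-(\varphi), d(\varphi)-p)$ matches term-by-term with \eqref{eqn:hsumB}, so the two sums are literally equal.

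I expect the main (and only real) obstacle to be the bookkeeping in the reindexing: verifying that the set of $\tau$ with $\nu/\tau$ a rook strip corresponds bijectively to the set of $\varphi$ appearing in \eqref{eqn:hsumB}. The subtlety is that a south-east corner of $\theta = \nu/\la$ need not be a south-east corner of $\nu$, and one must confirm that deleting a rook strip of south-east corners of $\theta$ (as opposed to of $\nu$) is the same operation as passing from $\nu$ to a strict partition $\tau \supset \la$ with $\nu/\tau$ a rook strip — this is where the hypothesis $\la \subset \nu$ and the shifted-diagram conventions are used. Once this is settled the corollary follows immediately; no further estimates or geometry are needed. I would phrase the proof as: "By the analogue of Lemma~\ref{lem:dualA} we have $c^\nu_{\la,p} = \euler{X}(\cO^\la \cdot \cO^p \cdot \cO_\nu^*)$; expand using \eqref{eqn:dualB} and Proposition~\ref{prop:tripleB}, reindex the sum by $\varphi = \tau/\la$, and compare with \eqref{eqn:hsumB}."
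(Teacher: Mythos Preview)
Your proposal is correct and is exactly the argument the paper has in mind: the paper simply states that Proposition~\ref{prop:tripleB} (together with the duality $\euler{X}(\cO_\nu^* \cdot \cO^\mu) = \delta_{\nu,\mu}$) implies the corollary, and your write-up spells out precisely that implication. The bookkeeping you flag as the only obstacle in fact resolves cleanly---because $\la$ is a strict partition, a south-east corner of $\theta=\nu/\la$ is automatically a south-east corner of $\nu$, so the $\tau$ with $\la\subset\tau\subset\nu$ and $\nu/\tau$ a rook strip correspond exactly to the $\varphi$ with $\theta'\subset\varphi\subset\theta$.
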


\begin{lemma}\label{lem:vanishB}
  Let $\theta$ be a non-empty shifted skew diagram.\smallskip

  \noindent{\rm(a)} If $\theta$ is not a rim then $\cB(\theta,p)=0$
  for all $p$.\smallskip

  \noindent{\rm(b)} If $\theta$ contains a row or column with $a$
  boxes, then $\cB(\theta,p)=0$ for $p<a$.\smallskip
  
  \noindent{\rm(c)} If $\theta$ is a rook strip, then
  $\cB(\theta,1)=(-1)^{|\theta|-1}$.
\end{lemma}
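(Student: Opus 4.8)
The plan is to analyze the alternating sum $\cB(\theta,p) = \sum_{\theta'\subset\varphi\subset\theta}(-1)^{|\theta|-|\varphi|}\,h(N^-(\varphi),d(\varphi)-p)$ by finding, in each of the three cases, an involution on the set of diagrams $\varphi$ (with $\theta'\subset\varphi\subset\theta$) that pairs up terms with opposite signs and equal summands, leaving at most a controlled residue. This is the same cancellation philosophy used in the proof of Lemma~\ref{lem:dualA} and in the type-A theorem above, where one finds a box whose inclusion or exclusion changes neither $c(\varphi)$ (resp.\ $N^-$ and $d$) nor the Euler characteristic.

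For part~(a), suppose $\theta$ is not a rim, i.e.\ $\wb\theta\neq\emptyset$. Then there is a box $B$ of $\theta$ lying strictly north-west of some other box, so $B$ is not a south-east corner of $\theta$ \emph{and} removing $B$ from any $\varphi$ with $\theta'\subset\varphi\subset\theta$ leaves a valid shifted skew diagram. I would pick such a $B$ for which $B\notin\theta'$ is impossible, so instead pick $B$ among the south-east corners whose removal is "harmless": concretely, if $\theta$ is not a rim there is a south-east corner $B$ such that the box immediately north-west of $B$ (diagonally) is still in $\theta$, whence $d(\varphi)$ and $N^-(\varphi)$ are unchanged when $B$ is toggled, and the terms for $\varphi$ and $\varphi\setminus B$ cancel. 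This pairs off all terms, giving $\cB(\theta,p)=0$. The delicate point is to verify that toggling $B$ really does not change $N^-$ — i.e.\ that removing such a corner box never disconnects the diagram — which follows because the diagonal neighbor of $B$ keeps $B$'s old neighbors connected through it. For part~(b), if $\theta$ contains a row or column with $a$ boxes, then by part~(a) we may assume $\theta$ is a rim; a rim with a full row (or column) of length $a$ has $d(\varphi)\geq a$ for every $\varphi$ arising in the sum (removing south-east corners from a rim cannot shrink the south-east rim below the length of such a straight run), so $h(N^-(\varphi),d(\varphi)-p)=1$ for all $\varphi$ whenever $p<a$ because then $d(\varphi)-p > 0 \geq$ ... wait, more carefully: when $p \leq d(\varphi)$ and in fact $d(\varphi)-p \geq N^-(\varphi)$ one gets $h=1$; I would check that for a rim the inequality $d(\varphi)-p\geq N^-(\varphi)$ holds once $p<a$, so every term equals $1$ and the alternating sum over the Boolean lattice of south-east-corner subsets vanishes (it has equally many even and odd subsets, since $\theta\neq\theta'$ here). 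For part~(c), if $\theta$ is a rook strip then $\theta'=\emptyset$, so the sum runs over all $\varphi\subset\theta$; each such $\varphi$ is again a rook strip, hence a rim with $N^-(\varphi)=N(\varphi)-1$ (components are single boxes, or zero if $\varphi=\emptyset$) and $d(\varphi)=|\varphi|$. With $p=1$ this gives $h(|\varphi|-1,|\varphi|-1)=1$ for $\varphi\neq\emptyset$ and $h(0,-1)=0$ for $\varphi=\emptyset$, so $\cB(\theta,1)=\sum_{\emptyset\neq\varphi\subset\theta}(-1)^{|\theta|-|\varphi|} = (-1)^{|\theta|-1}$ after the standard inclusion–exclusion collapse $\sum_{\varphi\subset\theta}(-1)^{|\varphi|} = 0$.

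The main obstacle I anticipate is part~(a): producing, for an arbitrary non-rim shifted skew diagram, a \emph{single} south-east corner whose toggling is provably neutral for \emph{all three} quantities $N^-(\varphi)$, $d(\varphi)$, and hence $h(N^-(\varphi),d(\varphi)-p)$, simultaneously across every intermediate $\varphi$ in the sum. One has to be careful with the shifted geometry — in particular near the diagonal boxes, where "directly below" and "directly to the right" interact unusually, and where a component could in principle split. I would handle this by choosing $B$ to be a south-east corner that is \emph{not} the unique box of its component and whose diagonal predecessor lies in $\wb\theta$ (such a $B$ exists precisely because $\wb\theta\neq\emptyset$), and then arguing that removal of $B$ from any $\varphi\ni B$ leaves both the connectivity and the south-east rim intact. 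Once the correct box is identified, the rest is the routine sign-cancellation bookkeeping already illustrated in the type-A theorem's proof.
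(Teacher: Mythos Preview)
Your approach is correct and essentially the same as the paper's: the paper also picks a south-east corner $B$ with a box strictly north-west of it in $\theta$ (which, as you intuit, forces the immediate diagonal neighbour to lie in $\theta'\subset\varphi$, so toggling $B$ changes neither $N^-$ nor $d$), and proves the single claim $h(N^-(\varphi),d(\varphi)-p)=h(N^-(\varphi\cup B),d(\varphi\cup B)-p)$ for every nonempty $\varphi$, treating (a), (b), (c) uniformly and finishing the rook-strip case with exactly the two-term residual computation you give in~(c). The inequality you defer in~(b) is precisely the paper's observation that any such $\varphi$ still contains a row or column of at least $a-1$ boxes, whence $d(\varphi)-N^-(\varphi)=|\varphi|-N(\varphi)+1\geq a-1\geq p$ and $h=1$.
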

\begin{proof}
  Assume the conditions of (a), (b), or (c) are met, and let $B \in
  \theta\ssm\theta'$ be a south-east corner of $\theta$; if $\theta$
  is not a rim, then choose $B$ such that $\theta$ contains a box
  strictly north and strictly west of $\theta$.  We claim that if
  $\varphi$ is any shifted skew diagram such that $\theta' \subset
  \varphi \subset \theta$, $\varphi \neq \emptyset$, and $B \not\in
  \varphi$, then
  \begin{equation*} 
  h(N^-(\varphi),d(\varphi)-p) = h(N^-(\varphi \cup
  B),d(\varphi \cup B)-p) \,;
  \end{equation*}
  in the situation of (c) we set $p=1$.  If $\theta$ is not a rim,
  then this is true because $N^-(\varphi) = N^-(\varphi\cup B)$ and
  $d(\varphi) = d(\varphi\cup B)$.  If the conditions for (b) hold,
  then we must have $N^-(\varphi) \leq d(\varphi)-a+1$ since $\varphi$
  contains a row or column with $a-1$ boxes, and this implies that
  $h(N^-(\varphi),d(\varphi)-p) = h(N^-(\varphi\cup B), d(\varphi\cup
  B)-p) = 1$ for $p < a$.  For (c), notice that $N^-(\varphi) =
  d(\varphi)-1$ whenever $\varphi$ is a non-empty rook strip.

  The lemma is immediate from the claim if $\theta$ is not a rook
  strip, and otherwise we obtain
  \begin{equation*}
  \begin{split}
    \cB(\theta,p) 
    &= (-1)^{|\theta|-1} 
    \left( h(N^-(B),d(B)-p) - h(N^-(\emptyset),d(\emptyset)-p) \right) \\
    &= (-1)^{|\theta|-1}
    \left( h(0,1-p) - h(0,-p) \right) .
  \end{split}
  \end{equation*}
  This proves the lemma when $\theta$ is a rook strip.
\end{proof}

Before we state our recursive Pieri rule for the coefficients
$\cB(\theta,p)$, we prove that the constants $c^{\nu^\vee}_{\la\mu}$
for maximal orthogonal Grassmannians are invariant under arbitrary
permutations of the partitions $\la$, $\mu$, $\nu$.  The same symmetry
is satisfied for Grassmannians of type A (for the same reason, see
\cite[Cor.~1]{buch:combinatorial}), but Example~\ref{ex:lgsym} below
shows that it fails for Lagrangian Grassmannians.

\begin{cor}
  The $K$-theoretic structure constants $c^\nu_{\la,\mu}$ for
  $\OG(n,2n+1)$ satisfy the symmetry $c^\nu_{\la,\mu} =
  c^{\mu^\vee}_{\la,\nu^\vee}$.
\end{cor}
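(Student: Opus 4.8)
The plan is to deduce the symmetry $c^\nu_{\la,\mu} = c^{\mu^\vee}_{\la,\nu^\vee}$ from the fact, established in the preceding material, that the $K$-theoretic triple intersection numbers are symmetric in their three arguments. First I would recall that for any strict partitions $\la,\mu,\nu$ contained in the staircase, Lemma~\ref{lem:dualA} (whose proof applies verbatim in the orthogonal case, as noted just after \eqref{eqn:dualB}) gives the expansion of the dual Schubert class $\cO_\nu^*$ in terms of the classes $\cO_\tau = \cO^{\tau}\cdot\cO^{\tau'}$ coming from rook-strip subtractions, so that
\begin{equation*}
  c^\nu_{\la,\mu} = \euler{X}(\cO^\la \cdot \cO^\mu \cdot \cO_\nu^*)
  = \sum_{\nu/\tau \text{ rook strip}} (-1)^{|\nu/\tau|}\,
    \euler{X}(\cO^\la \cdot \cO^\mu \cdot \cO^\tau \cdot \cO^{\tau^\vee}) \,,
\end{equation*}
where I rewrite $\cO_\tau$ as $\cO^{\tau'}\cdot\cO^{\tau^\vee}$ after identifying the Richardson variety $X_\tau$ with $X^{\tau'}\cap X^{(\tau')^\vee}$ — more carefully, $\cO_\tau$ is by definition $\cO^\la\cdot\cO^\mu$ for the partitions cutting out $\tau$, i.e.\ $X_\tau$ as a Richardson variety has class equal to a product of an ordinary Schubert class and an opposite one, and the opposite one can be replaced by $\cO^{\sigma}$ for the dual partition. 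The point of this rewriting is to reduce every term to a genuine triple (or, after folding two factors, still an intersection number) of the form $\euler{X}(\cO^\alpha\cdot\cO^\beta\cdot\cO^\gamma)$.

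The crucial input is that such sheaf Euler characteristics of triple intersections of Schubert varieties are symmetric under permuting $\alpha,\beta,\gamma$ and, moreover, symmetric under replacing any one of them by its $n$-dual while dualizing another. Concretely, since $X^\gamma(E_\bull)$ and $X^{\gamma^\vee}(E^\op_\bull)$ are translates of each other and $G$ acts transitively with connected stabilizers, $\euler{X}(\cO^\alpha\cdot\cO^\beta\cdot\cO^\gamma)$ depends only on the isomorphism type of a generic triple intersection of Schubert varieties in classes $\alpha,\beta,\gamma$, which is manifestly symmetric in the three partitions; and $\cO^{\gamma^\vee}$ is the class of a $G$-translate of $X^\gamma$, so one may freely convert $\cO^\gamma \leftrightarrow \cO^{\gamma^\vee}$ as long as the intersection is taken with suitably general translates. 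Applying this to the displayed sum: permuting $\mu$ past $\nu$ and dualizing, the term indexed by $\tau$ contributes the same as a corresponding term in the expansion of $c^{\mu^\vee}_{\la,\nu^\vee} = \euler{X}(\cO^\la \cdot \cO^{\nu^\vee} \cdot \cO_{\mu^\vee}^*)$, because the rook strips $\nu/\tau$ are in a natural (complementation) bijection with the rook strips $\mu^\vee/\sigma$ contributing to $\cO^*_{\mu^\vee}$, with matching signs $(-1)^{|\nu/\tau|} = (-1)^{|\mu^\vee/\sigma|}$. I would set up this bijection explicitly using the $n$-duality on strict partitions (parts of $\rho_n$ not among the parts of $\mu$), under which a rook strip removed from $\nu$ corresponds to a rook strip removed from $\mu^\vee$ in the complementary staircase positions.

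The main obstacle I anticipate is the bookkeeping of duals: one must check that the "fold $\cO_\tau$ into a single Schubert class times an opposite Schubert class, then dualize" step is compatible, across the whole alternating sum, with the rook-strip bijection between $\nu/\tau$ and $\mu^\vee/\sigma$, and in particular that the correspondence $\tau \mapsto \sigma$ sends rook strips to rook strips and preserves cardinality parity. A clean way to organize this is to prove directly, at the level of the three-argument intersection number, that $\euler{X}(\cO^\la\cdot\cO^\mu\cdot\cO_\nu^*)$ is symmetric in the three inputs $\la,\mu,\nu$ provided one simultaneously replaces $\cO_\nu^*$ by the analogous dual class — i.e.\ that the dual class $\cO_\nu^*$ is characterized intrinsically by $\euler{X}(\cO_\nu^*\cdot\cO^\mu) = \delta_{\nu,\mu}$, hence behaves functorially under the $G$-action — and then note that $c^\nu_{\la,\mu}$ and $c^{\mu^\vee}_{\la,\nu^\vee}$ are two specializations of the same symmetric quantity. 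This reduces the corollary to the symmetry of triple intersection Euler characteristics, which follows from transitivity of $G$ on triples of general translates of Schubert varieties together with the rationality and rational-singularities statements recalled in Section~\ref{sec:ktheory}. The only genuinely combinatorial lemma needed is the complementation bijection between rook strips in $\nu$ and rook strips in $\mu^\vee$, which is elementary.
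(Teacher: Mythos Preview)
Your proposal has a genuine gap. The central geometric claim is wrong: $X^\gamma(E_\bull)$ and $X^{\gamma^\vee}(E^\op_\bull)$ are \emph{not} $G$-translates of one another in general, since they have codimensions $|\gamma|$ and $|\gamma^\vee|$ respectively. What is true is that $X^\gamma(E_\bull)$ and $X^\gamma(E^\op_\bull)$ are translates, so $\cO_\tau = \cO^{\tau^\vee}$ as classes; but you cannot ``freely convert $\cO^\gamma \leftrightarrow \cO^{\gamma^\vee}$'' inside an Euler characteristic. Consequently the alternating sums you write down,
\[
c^\nu_{\la,\mu} = \sum_{\nu/\tau\text{ rook strip}} (-1)^{|\nu/\tau|}\, \euler{X}(\cO^\la\cdot\cO^\mu\cdot\cO^{\tau^\vee})
\quad\text{and}\quad
c^{\mu^\vee}_{\la,\nu^\vee} = \sum_{\mu^\vee/\sigma\text{ rook strip}} (-1)^{|\mu^\vee/\sigma|}\, \euler{X}(\cO^\la\cdot\cO^{\nu^\vee}\cdot\cO^{\sigma^\vee}),
\]
are indexed by rook strips inside $\nu$ and inside $\mu^\vee$ respectively. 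These index sets have no relation to each other (they can have different cardinalities), so the ``complementation bijection'' you invoke does not exist. Commutativity of the product in $K(X)$ gives only $c^\nu_{\la,\mu}=c^\nu_{\mu,\la}$, which is not the claim.

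The paper's argument is short but uses a fact specific to $\OG$: Lemma~\ref{lem:vanishB}(c) computes $\cB(\theta,1)=(-1)^{|\theta|-1}$ for nonempty rook strips $\theta$, which together with parts (a) and (b) says that $\cO^1\cdot\cO^{\nu^\vee} = \sum_{\sigma\supsetneq\nu^\vee,\ \sigma/\nu^\vee\text{ rook strip}} (-1)^{|\sigma/\nu^\vee|-1}\cO^\sigma$. Comparing with \eqref{eqn:dualB} (under $\sigma=\tau^\vee$) gives the identity $\cO_\nu^* = (1-\cO^1)\cdot\cO^{\nu^\vee}$. Then
\[
c^\nu_{\la,\mu} \;=\; \euler{X}\bigl(\cO^\la\cdot\cO^\mu\cdot\cO^{\nu^\vee}\cdot(1-\cO^1)\bigr),
\]
which is visibly symmetric in $\mu$ and $\nu^\vee$, yielding $c^\nu_{\la,\mu}=c^{\mu^\vee}_{\la,\nu^\vee}$. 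Note that this step genuinely uses the $\OG$ Pieri computation: the analogous identity $\cO_\nu^*=(1-\cO^1)\cdot\cO^{\nu^\vee}$ fails for $\LG$, consistent with Example~\ref{ex:lgsym}. Your proposed route, relying only on general symmetry of triple intersections, could not distinguish $\OG$ from $\LG$ and therefore cannot succeed.
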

\begin{proof}
  Lemma~\ref{lem:vanishB} implies that $\cO_\nu^* = (1-\cO^1)\cdot
  \cO^{\nu^\vee}$, so we obtain the identity $c^\nu_{\la,\mu} =
  \euler{X}(\cO^\la \cdot \cO^\mu \cdot \cO^{\nu^\vee} \cdot
  (1-\cO^1))$.  The corollary follows from this.
\end{proof}

A shifted skew diagram is called a {\em row\/} if all its boxes belong
to the same row, and a {\em column\/} if all boxes belong to the same
column.  If $\theta$ is a non-empty rim, then we define the {\em
  north-east arm\/} of $\theta$ to be the largest row or column that
can be obtained by intersecting $\theta$ with a square whose
upper-right box agrees with the upper-right box of $\theta$.  We let
$\wh\theta$ be the diagram obtained by removing the north-east arm
from $\theta$.  Notice that $\theta$ is a row or a column if and only
if $\wh\theta = \emptyset$.
\begin{equation*}
\pic{.5}{nearm}
\end{equation*}

The following theorem is our recursive Pieri rule for the $K$-theory
of maximal orthogonal Grassmannians.  It implies that the Pieri
coefficients $c^\nu_{\la,p} = \cB(\nu/\la,p)$ have alternating signs,
$(-1)^{|\nu/\la|-p}\, c^\nu_{\la,p} \geq 0$.

\begin{thm}\label{thm:pieriBrec}
  Let $\theta$ be a shifted skew diagram and let $p \in \Z$.  If
  $\theta$ is not a rim then $\cB(\theta,p) = 0$.  If $p\leq 0$ then
  $\cB(\theta,p) = \delta_{\theta,\emptyset}$, and $\cB(\emptyset,p)$
  is equal to one if $p \leq 0$ and is zero otherwise.  If $\theta
  \neq \emptyset$ is a rim and $p>0$, then $\cB(\theta,p)$ is
  determined by the following rules, with $a =
  |\theta|-|\wh\theta|$.\smallskip
  
  \noindent{\rm(i)} If $\theta$ is a row or a column then
  $\cB(\theta,p) = \delta_{|\theta|,p}$.\smallskip

  \noindent{\rm(ii)} If $\wh\theta \neq \emptyset$ and the north-east
  arm of $\theta$ is connected to $\wh\theta$, then we have $\cB(\theta,p) =
  \cB(\wh\theta,p-a) - \cB(\wh\theta,p-a+1)$.\smallskip

  \noindent{\rm(iii)} Assume that $\wh\theta \neq \emptyset$ and the north-east
  arm of $\theta$ is not connected to $\wh\theta$.  If $p<a$ then
  $\cB(\theta,p) = 0$.  Otherwise we have
  \begin{equation*}
  \cB(\theta,p) = (2-\delta_{p,a})(\cB(\wh\theta,p-a) -
  \cB(\wh\theta,p-a+1)) +
  (1-\delta_{a,1})(\cB(\wh\theta,p-a+2)-\cB(\wh\theta,p-a+1)).
  \end{equation*}
\end{thm}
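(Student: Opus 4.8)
The plan is to reduce everything to the formula $\cB(\theta,p)=\sum_{\theta'\subset\varphi\subset\theta}(-1)^{|\theta|-|\varphi|}h(N^-(\varphi),d(\varphi)-p)$ from (\ref{eqn:hsumB}) and to analyze how the set of summands, and the invariants $N^-$ and $d$, behave when we split off the north-east arm. The first two sentences of the theorem (the vanishing when $\theta$ is not a rim, and the values when $p\le 0$) are already contained in Lemma~\ref{lem:vanishB}(a) together with the trivial computation $h(N^-(\varphi),d(\varphi)-p)=1$ for $p\le 0$ (since then $d(\varphi)-p\ge d(\varphi)\ge N^-(\varphi)$), so the signs in (\ref{eqn:hsumB}) cancel in pairs unless $\theta=\emptyset$. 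Likewise part (i), the case of a single row or column, follows directly: then $\theta'=\emptyset$, the sum has two terms $\varphi=\theta$ and $\varphi=\emptyset$, giving $h(0,|\theta|-p)-h(0,-p)=\delta_{|\theta|,p}$ for $p>0$. So the real content is parts (ii) and (iii), for a rim $\theta$ with $\wh\theta\neq\emptyset$.

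For (ii) and (iii) I would argue as in the proof of the type~A theorem: let $\alpha=\theta\ssm\wh\theta$ be the north-east arm (a row or column with $a$ boxes), write $\alpha'=\alpha\cap\theta'$ for the arm with its south-east corner removed, and $\wh\theta'=\wh\theta\cap\theta'$. Every $\varphi$ with $\theta'\subset\varphi\subset\theta$ decomposes as $\varphi=\psi\cup\beta$ where $\wh\theta'\subset\psi\subset\wh\theta$ and $\beta\in\{\alpha,\alpha'\}$ (when $\theta$ is not itself a rook strip, so that $\alpha$ still has a genuine south-east corner; the rook-strip case is covered by Lemma~\ref{lem:vanishB}(c)). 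The key is then to compute $d(\psi\cup\alpha)$, $d(\psi\cup\alpha')$, $N^-(\psi\cup\alpha)$, $N^-(\psi\cup\alpha')$ in terms of $d(\psi)$ and $N^-(\psi)$. In the \emph{connected} case (ii) the arm merely prolongs an existing component, so $N^-$ is unchanged and $d$ increases by $a$ or $a-1$ according as $\beta=\alpha$ or $\alpha'$; grouping the two terms for each $\psi$ gives $h(N^-(\psi),d(\psi)+a-p)-h(N^-(\psi),d(\psi)+a-1-p)$ for the $\psi\cup\alpha$ pair wait — more carefully, the $\varphi=\psi\cup\alpha$ contribution pairs against $\varphi=\psi\cup\alpha'$ to yield $(-1)^{|\wh\theta|-|\psi|}\bigl(h(N^-(\psi),d(\psi)-(p-a))-h(N^-(\psi),d(\psi)-(p-a+1))\bigr)$, and summing over $\psi$ reproduces $\cB(\wh\theta,p-a)-\cB(\wh\theta,p-a+1)$. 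In the \emph{disconnected} case (iii) the arm is a new component, so $N^-(\psi\cup\beta)=N^-(\psi)+1$ when $\psi\neq\emptyset$, and I would use the recursion $h(a+1,b)+h(a,b-1)=2h(a,b)$ from (\ref{eqn:hrel}) to convert the shift in $N^-$ into the combination $(2-\delta_{p,a})(\cdots)+(1-\delta_{a,1})(\cdots)$ appearing in the statement; the $\delta_{p,a}$ and $\delta_{a,1}$ corrections are exactly the boundary effects of (\ref{eqn:hdef}) when $b=0$ or when the arm has a single box (so $\alpha'=\emptyset$ and one of the two terms disappears), plus the $\psi=\emptyset$ term where $N^-$ does not increase. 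The vanishing $\cB(\theta,p)=0$ for $p<a$ in case (iii) is Lemma~\ref{lem:vanishB}(b) applied to the arm.

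The main obstacle, and the place I expect to spend the most care, is the bookkeeping in case (iii): correctly tracking which $\varphi$ have $N^-(\varphi)=N^-(\psi)$ versus $N^-(\psi)+1$ (the $\psi=\emptyset$ term is special, and so is the term where $\beta=\alpha'=\emptyset$ when $a=1$), and verifying that after applying (\ref{eqn:hrel}) the surviving terms collapse exactly to the asserted four-term expression rather than something equivalent only up to a lower-order correction. A clean way to organize this is to first prove the identity in the ``generic'' regime $p>a$ with the arm having $\ge 2$ boxes, where all the $\delta$'s vanish and the formula reads $\cB(\theta,p)=2(\cB(\wh\theta,p-a)-\cB(\wh\theta,p-a+1))+(\cB(\wh\theta,p-a+2)-\cB(\wh\theta,p-a+1))$, and then separately check the two degenerate cases $p=a$ and $a=1$ by a direct count of the small number of affected summands, using Lemma~\ref{lem:vanishB}(b),(c) to pin down the values. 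Throughout, I would lean on Lemma~\ref{lem:chopB}/Lemma~\ref{lem:fiberB} only implicitly, via Proposition~\ref{prop:tripleB}; the argument is essentially combinatorial manipulation of (\ref{eqn:hsumB}) once that proposition is in hand.
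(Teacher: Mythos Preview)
Your overall strategy---splitting off the north-east arm and tracking how $d$ and $N^-$ change across the summands of (\ref{eqn:hsumB})---is exactly the paper's approach, and your handling of the preliminary cases and your outline for case~(iii) (split on $a=1$ versus $a>1$, with separate treatment of the rook-strip boundary) match what the paper does.

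There is a minor slip in case~(i): for a row or column with $|\theta|\ge 2$ the diagram $\theta'$ is the row or column of length $|\theta|-1$, not $\emptyset$.  The two terms are thus $h(0,|\theta|-p)-h(0,|\theta|-1-p)=\delta_{|\theta|,p}$, whereas your stated difference $h(0,|\theta|-p)-h(0,-p)$ evaluates to $[p\le|\theta|]$.

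The substantive gap is in case~(ii).  Your assertion that ``the arm merely prolongs an existing component, so $N^-$ is unchanged'' is correct only when the arm $\alpha$ is a \emph{row} attached to the right of the upper-right box $B$ of $\wh\theta$.  When $\alpha$ is a \emph{column} sitting above $B$, no box of $\alpha$ is a south-east corner of $\theta$ (each has a box of $\theta$ directly below it), so your $\alpha'$ equals $\alpha$ and the pair $\beta\in\{\alpha,\alpha'\}$ collapses.  The northernmost south-east corner of $\theta$ is then $B$ itself, which lies in $\wh\theta$; for those $\psi$ with $B\notin\psi$ the arm is a new component and $N^-(\psi\cup\alpha)=N^-(\psi)+1$.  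The paper handles this column-connected subcase by pairing the $B\in\psi$ and $B\notin\psi$ terms and invoking (\ref{eqn:hrel}) to trade the shift in $N^-$ for a shift in the second argument---so (\ref{eqn:hrel}) is already needed in case~(ii), not only in case~(iii).  Once you add this repair, the rest of your plan goes through as in the paper.
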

\begin{proof}
  We may assume that $\theta$ is a non-empty rim and $p > 0$, since
  otherwise the theorem follows from Lemma~\ref{lem:vanishB}.  In
  particular we have $d(\theta) = |\theta|$.  If $\theta$ is a row or
  column, then $\cB(\theta,p) = h(0,|\theta|-p) - h(0,|\theta'|-p) =
  \delta_{p,|\theta|}$, as claimed in (i).  Otherwise let $\psi$ be
  the north-east arm of $\theta$, let $B \in \theta \ssm \theta'$ be
  the south-east corner farthest to the north, and set $\psi' = \psi
  \ssm B$ and $\wh\theta' = \wh\theta \cap \theta'$.  We have
  $\wh\theta \neq \emptyset$ and $a = |\psi|$.

  Assume that $\psi$ is connected to $\wh\theta$.  Then $\wh\theta$ is
  not a rook strip.  We first consider the case where $\psi$ is a row
  attached to the right side of the upper-right box of $\wh\theta$.
  For any rim $\varphi$ with $\wh\theta' \subset \varphi \subset
  \wh\theta$ we then have
  \begin{equation*}
  \begin{split}
  & (-1)^{|\theta|-|\varphi\cup\psi|} \left(
    h(N^-(\varphi\cup\psi), |\varphi\cup\psi|-p) -
    h(N^-(\varphi\cup\psi'), |\varphi\cup\psi'|-p)\right) \\
  &=
  (-1)^{|\wh\theta|-|\varphi|} \left(
    h(N^-(\varphi),|\varphi|-p+a) -
    h(N^-(\varphi),|\varphi|-p+a-1)\right) .
  \end{split}
  \end{equation*}
  This implies that $\cB(\theta,p) = \cB(\wh\theta,p-a) -
  \cB(\wh\theta,p-a+1)$ by summing over $\varphi$.  Next consider the
  case where $\psi$ is a column attached above the upper-right box of
  $\wh\theta$.  For any diagram $\varphi$ with $\wh\theta' \subset
  \varphi \subset \wh\theta \ssm B$, it follows from (\ref{eqn:hrel})
  that
  \begin{equation*}
  \begin{split}
    &(-1)^{|\theta|-|\varphi \cup \psi|} \left(
      h(N^-(\varphi\cup\psi),|\varphi\cup\psi|-p) 
      - h(N^-(\varphi \cup B \cup \psi), 
      |\varphi \cup B \cup \psi|-p) \right) \\
    &= 
    (-1)^{|\wh\theta|-|\varphi|} \left(
      h(N^-(\varphi)+1,|\varphi|-p+a) 
      - h(N^-(\varphi\cup B),|\varphi\cup B|-p+a) \right) \\
    &= 
    (-1)^{|\wh\theta|-|\varphi|} \left(
      h(N^-(\varphi),|\varphi|-p+a)
      - h(N^-(\varphi\cup B),|\varphi\cup B|-p+a) \right)\\
      & \hspace{15mm}
      - h(N^-(\varphi),|\varphi|-p+a-1) 
      + h(N^-(\varphi\cup B),|\varphi\cup B|-p+a-1)
  \end{split}
  \end{equation*}
  This again implies that $\cB(\theta,p) = \cB(\wh\theta,p-a) -
  \cB(\wh\theta,p-a+1)$, as required by (ii).

  Now assume that $\psi$ is not connected to $\wh\theta$.  We may also
  assume that $p\geq a$, since otherwise $\cB(\theta,p)=0$ by
  Lemma~\ref{lem:vanishB}(b).  We first consider the case $a>1$.  For
  any non-empty rim $\varphi$ with $\wh\theta' \subset \varphi \subset
  \wh\theta$, we obtain
  \begin{equation*}
  \begin{split}
    & (-1)^{|\theta|-|\varphi\cup\psi|} \left(
      h(N^-(\varphi\cup\psi),|\varphi\cup\psi|-p) -
      h(N^-(\varphi\cup\psi'),|\varphi\cup\psi'|-p) \right) \\
    &= (-1)^{|\wh\theta|-|\varphi|} \left(
      h(N^-(\varphi)+1,|\varphi|-p+a) -
      h(N^-(\varphi)+1,|\varphi|-p+a-1) \right) \\
    &= (-1)^{|\wh\theta|-|\varphi|} (
    2\, h(N^-(\varphi),|\varphi|-p+a) 
    - 3\, h(N^-(\varphi),|\varphi|-p+a-1) \\
    & \hspace{23mm} + h(N^-(\varphi),|\varphi|-p+a-2) ) \,.
  \end{split}
  \end{equation*}
  If $\wh\theta$ is not a rook strip, then $\wh\theta' \neq \emptyset$
  and we obtain that $\cB(\theta,p) = 2\, \cB(\wh\theta,p-a) - 3\,
  \cB(\wh\theta,p-a+1) + \cB(\wh\theta,p-a+2)$.  If $p>a$ then this is
  the identity of (iii), and if $p=a$ then Lemma~\ref{lem:vanishB}(b)
  shows that $\cB(\wh\theta,p-a) = \cB(\wh\theta,p-a+1) = 0$, so (iii)
  also holds in this case.  If $\wh\theta$ is a rook strip, then
  summing over $\varphi \neq \emptyset$ gives the identity
  \begin{equation*}
  \begin{split}
    \cB(\theta,p) \ 
    & = \ 2\, \cB(\wh\theta,p-a) - 3\, \cB(\wh\theta,p-a+1)
    + \cB(\wh\theta,p-a+2) \\
    & \ \ \ \ - (-1)^{|\wh\theta|}\left( h(0,a-p) - 2 h(0,a-p-1) +
      h(0,a-p-2) \right) \,.
  \end{split}
  \end{equation*}
  If $p>a$ then the three last terms vanish, and if $p=a$ then
  Lemma~\ref{lem:vanishB} implies that $\cB(\theta,p) =
  \cB(\wh\theta,2) - 2\, \cB(\wh\theta,1)$, as required.  At last we
  consider the case $a=1$.  Then $\psi' = \emptyset$, so for any
  non-empty rim $\varphi$ with $\wh\theta' \subset \varphi \subset
  \wh\theta$ we get
  \begin{equation*}
  \begin{split}
    & (-1)^{|\theta|-|\varphi\cup\psi|} \left(
      h(N^-(\varphi\cup\psi),|\varphi\cup\psi|-p) -
      h(N^-(\varphi\cup\psi'),|\varphi\cup\psi'|-p) \right) \\
    &= (-1)^{|\wh\theta|-|\varphi|} \left(
      h(N^-(\varphi)+1,|\varphi|-p+1) -
      h(N^-(\varphi),|\varphi|-p) \right) \\
    &= (-1)^{|\wh\theta|-|\varphi|} \left(
    2\, h(N^-(\varphi),|\varphi|-p+1) 
    - 2\, h(N^-(\varphi),|\varphi|-p) \right) .
  \end{split}
  \end{equation*}
  If $\wh\theta$ is not a rook strip, then this implies that
  $\cB(\theta,p) = 2\, \cB(\wh\theta,p-1) - 2\, \cB(\wh\theta,p)$, and
  for $p=a$ we have $\cB(\wh\theta,p-1) = \cB(\wh\theta,p) = 0$ by
  Lemma~\ref{lem:vanishB}.  Finally, if $\wh\theta$ is a rook strip,
  then we obtain $\cB(\theta,p) = 2\, \cB(\wh\theta,p-1) - 2\,
  \cB(\wh\theta,p) - (-1)^{|\wh\theta|} h(0,1-p)$, which agrees with
  (iii) by Lemma~\ref{lem:vanishB}.  This establishes (iii) and
  completes the proof.
\end{proof}

Let $\theta$ be a rim.  A {\em KOG-tableau\/} of shape $\theta$ is a
labeling of the boxes of $\theta$ with positive integers such that (i)
each row of $\theta$ is strictly increasing from left to right; (ii)
each column of $\theta$ is strictly increasing from top to bottom; and
(iii) each box is either smaller than or equal to all the boxes
south-west of it, or it is greater than or equal to all the boxes
south-west of it.  If $\theta$ is not a rim, then there are no
KOG-tableaux with shape $\theta$.  The content of a KOG-tableau is the
set of integers contained in its boxes.

\begin{cor}\label{cor:pieriBtab}
  The constant $c^\nu_{\la,p}$ for $K(\OG(n,2n+1))$ is equal to
  $(-1)^{|\nu/\la|-p}$ times the number of KOG-tableaux of shape
  $\nu/\la$ with content $\{1,2,\dots,p\}$.
\end{cor}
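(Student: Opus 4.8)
The plan is to deduce this combinatorial statement from the recursive Pieri rule of Theorem~\ref{thm:pieriBrec}. Write $\theta = \nu/\la$ and let $T(\theta,p)$ denote the number of KOG-tableaux of shape $\theta$ with content $\{1,\dots,p\}$. Since $c^\nu_{\la,p} = \cB(\theta,p)$ by the corollary preceding Lemma~\ref{lem:vanishB}, it suffices to prove that $\cB(\theta,p) = (-1)^{|\theta|-p}\,T(\theta,p)$ for every shifted skew diagram $\theta$ and every $p\in\Z$; and because the clauses of Theorem~\ref{thm:pieriBrec} determine $\cB$ uniquely by recursion on $|\theta|$, it is enough to check that the function $(\theta,p)\mapsto (-1)^{|\theta|-p}\,T(\theta,p)$ satisfies exactly the same list of identities. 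The easy clauses are immediate: if $\theta$ is not a rim there are no KOG-tableaux of that shape, so $T(\theta,p)=0$; if $p\leq 0$ then $\{1,\dots,p\}=\emptyset$, so $T(\theta,p)=\delta_{\theta,\emptyset}$ and the sign is $1$; and if $\theta$ is a single row or column, a KOG-tableau is just a strictly increasing sequence whose underlying set is $\{1,\dots,p\}$ (condition (iii) is vacuous in a single row or column), which forces $|\theta|=p$ and the unique filling $1,2,\dots,p$, so $(-1)^{|\theta|-p}T(\theta,p)=\delta_{|\theta|,p}$. This disposes of the non-rim case, the case $p\leq 0$, and case (i).

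For the inductive step with $\theta$ a nonempty rim and $p>0$, let $\psi$ be the north-east arm, $a=|\psi|=|\theta|-|\wh\theta|$, and note that $\wh\theta$ is again a rim (a routine check from the definition of the south-east rim). Translating signs through $|\theta|=|\wh\theta|+a$, the identity to be proved in case (ii) becomes $T(\theta,p)=T(\wh\theta,p-a)+T(\wh\theta,p-a+1)$, and in case (iii) it becomes $T(\theta,p)=(2-\delta_{p,a})\,T(\wh\theta,p-a)+(3-\delta_{p,a}-\delta_{a,1})\,T(\wh\theta,p-a+1)+(1-\delta_{a,1})\,T(\wh\theta,p-a+2)$ for $p\geq a$, together with $T(\theta,p)=0$ for $p<a$ (the latter because the $a$ boxes of $\psi$ need $a$ distinct positive labels, forcing the largest to be at least $a$). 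In case (ii) I would build a bijection by deleting $\psi$: since $\psi$ is the maximal row or column at the extreme north-east of the rim, its $a$ strictly increasing entries are forced to be the largest labels in use, and the single box where $\psi$ abuts $\wh\theta$ either introduces one fresh label or repeats the top label already occurring in $\wh\theta$ — this alternative being governed by condition (iii) applied along the boxes south-west of that box — so the deleted tableau has content either $\{1,\dots,p-a\}$ or $\{1,\dots,p-a+1\}$, and conversely each such KOG-tableau of $\wh\theta$ extends in exactly one way. This yields the two-term sum, and the induction hypothesis finishes case (ii).

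Case (iii), where $\psi$ is not connected to $\wh\theta$, is the main obstacle. Here condition (iii) of a KOG-tableau couples the entries of $\psi$ with those of $\wh\theta$ across the empty gap between them, and one must split according to whether the bottom box of $\psi$ is $\leq$ or $\geq$ all boxes south-west of it in $\wh\theta$, whether the smallest label of $\psi$ may coincide with a label already present in $\wh\theta$, and the degenerate cases $p=a$ (where $\psi$ exhausts all available labels) and $a=1$ (where $\psi$ is a single box, so the two alternatives of condition (iii) for that box partly merge). Carefully enumerating these cases produces the coefficients $2-\delta_{p,a}$, $3-\delta_{p,a}-\delta_{a,1}$, and $1-\delta_{a,1}$ above, with Lemma~\ref{lem:vanishB} supplying the boundary contributions (for instance when $\wh\theta$ is itself a rook strip, or when $p=a$ so that some of the $T(\wh\theta,\cdot)$ terms vanish). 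Finally, assembling the cases and reversing the sign translation shows that $(-1)^{|\theta|-p}\,T(\theta,p)$ satisfies every clause of Theorem~\ref{thm:pieriBrec}, hence equals $\cB(\theta,p)=c^\nu_{\la,p}$, which is the assertion of the corollary.
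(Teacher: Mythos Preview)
Your overall strategy is exactly the paper's: set $\wt\cB(\theta,p)=(-1)^{|\theta|-p}T(\theta,p)$ and verify that it satisfies every clause of Theorem~\ref{thm:pieriBrec}.  Your sign translations are correct, and the target identities you write down for cases (ii) and (iii) are the right ones.  The boundary clauses (non-rim, $p\le 0$, single row or column) are fine.

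The concrete gap is in your bijection for case (ii).  You assert that the entries of the north-east arm $\psi$ ``are forced to be the largest labels in use''.  This is false in general.  When $\psi$ is a \emph{column} attached above the upper-right box of $\wh\theta$, the top box of $\psi$ is the upper-right box of the whole rim; every box of $\wh\theta$ not in that column lies strictly south-west of it, and the column condition forces it to be smaller than the boxes of $\psi$ below it.  Condition (iii) then forces that top box to be $1$, and the remaining boxes of $\psi$ to be $2,\dots,a-1$ followed by either $a$ or a large label.  The paper's own worked example (the seven KOG-tableaux for $\la=(6,4,1)$, $\nu=(7,6,3,1)$, $p=5$) is an instance of case (ii) with $a=1$, and the arm box carries the label $1$ in every tableau, never the largest label $5$.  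So your deletion map does not do what you claim: after deleting $\psi$ the residual content is $\{2,\dots,p\}$ or $\{1,\dots,p\}$ (hence, after relabeling, $\{1,\dots,p-1\}$ or $\{1,\dots,p\}$), not ``content minus the top labels''.  The row sub-case of (ii) behaves differently from the column sub-case, and you have only treated (at best) one of them.

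Your case (iii) sketch also needs sharpening for the same reason: in the disconnected-column sub-case the paper shows the content of $\psi$ is $\{1,\dots,a\}$ or $\{1,\dots,a-1,p\}$, and the count then splits according to how many of $a-1,a,p$ reappear in $\wh\theta$; your narrative about ``the bottom box of $\psi$'' and the two alternatives is pointed in the right direction but does not yet pin down the six contributions producing $2,3,1$ (with the $\delta$-corrections).  Fix the direction of the labels in $\psi$ first, then redo the enumeration separately for $\psi$ a row and $\psi$ a column.
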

\begin{proof}
  Define $\wt\cB(\theta,p)$ to be $(-1)^{|\theta|-p}$ times the number
  of KOG-tableaux of shape $\theta$ with content $\{1,2,\dots,p\}$.
  It is enough to show that these integers satisfy the recursions
  given by Theorem~\ref{thm:pieriBrec}.  We check this when the
  north-east arm $\psi$ is a column that is disconnected from
  $\wh\theta$, $\wh\theta \neq \emptyset$, and $1 < a < p$ where $a =
  |\theta|-|\wh\theta|$.  The remaining cases are left to the reader.

  In any KOG-tableau of shape $\theta$, the content of (the boxes of)
  $\psi$ must be either $S_1 = \{1,2,\dots,a\}$ or $S_2 =
  \{1,2,\dots,a-1,p\}$.  If the content of $\psi$ is $S_1$, then
  $\wh\theta$ must be a KOG-tableau whose content is either
  $\{a+1,a+2,\dots,p\}$ or $\{a,a+1,\dots,p\}$.  If the content of
  $\psi$ is $S_2$, then $\wh\theta$ must be a KOG-tableau whose
  content is one of the sets $\{a,a+1,\dots,p-1\}$,
  $\{a-1,a,\dots,p-1\}$, $\{a,a+1,\dots,p\}$, or $\{a-1,a,\dots,p\}$.
  This gives the identity $\wt\cB(\theta,p) = 2\,
  \wt\cB(\wh\theta,p-a) - 3\, \wt\cB(\wh\theta,p-a+1) +
  \wt\cB(\wh\theta,p-a+2)$, as required.
\end{proof}

Itai Feigenbaum and Emily Sergel have shown us a proof that
KOG-tableaux are invariant under Thomas and Yong's jeu-de-taquin
slides \cite{thomas.yong:jeu}.  Corollary~\ref{cor:pieriBtab}
therefore confirms that the Pieri coefficients $c^\nu_{\la,p}$ are
correctly computed by Thomas and Yong's conjectured
Littlewood-Richardson rule.

\begin{example}
  Let $\la = (6,4,1)$ and $\nu = (7,6,3,1)$.  Then the constant
  $c^\nu_{\la,5} = -7$ for $K(\OG(7,15))$ is counted by the
  KOG-tableaux displayed below.\medskip

  \noindent \ \ 
  $\tableau{10}{&&&{1}\\&&{1}&{5}\\{2}&{4}\\{3}}$ \ \ 
  $\tableau{10}{&&&{1}\\&&{2}&{5}\\{2}&{4}\\{3}}$ \ \ 
  $\tableau{10}{&&&{1}\\&&{2}&{5}\\{3}&{5}\\{4}}$ \ \ 
  $\tableau{10}{&&&{1}\\&&{2}&{5}\\{3}&{4}\\{4}}$ \ \ 
  $\tableau{10}{&&&{1}\\&&{4}&{5}\\{1}&{3}\\{2}}$ \ \ 
  $\tableau{10}{&&&{1}\\&&{4}&{5}\\{2}&{4}\\{3}}$ \ \ 
  $\tableau{10}{&&&{1}\\&&{4}&{5}\\{2}&{3}\\{3}}$
\end{example}

\section{Lagrangian Grassmannians}\label{sec:typeC}

Let $e_1,\dots,e_{2n}$ be the standard basis of $\C^{2n}$ and define
a symplectic form by
\begin{equation*}
(e_i,e_j) = \begin{cases}
  1 & \text{if $i+j=2n+1$ and $i<j$;} \\
  -1& \text{if $i+j=2n+1$ and $i>j$; and} \\
  0 & \text{if $i+j\neq 2n+1$.}
\end{cases}
\end{equation*}
Let $X = \LG(n,2n) = \{ V \subset \C^{2n} \mid \dim(V)=n \text{ and }
(V,V)=0 \}$ be the Lagrangian Grassmannian of maximal isotropic
subspaces of $\C^{2n}$.  This is a non-singular variety of dimension
$\binom{n+1}{2}$.  The Schubert varieties in $X$ are indexed by strict
partitions $\la = (\la_1 > \la_2 > \dots > \la_\ell > 0)$ with $\la_1
\leq n$, the same partitions as are used for the orthogonal
Grassmannian $\OG(n,2n+1)$.  The notation introduced for shifted skew
diagrams in section~\ref{sec:typeB} will also be used in this section.
The Schubert variety for the strict partition $\la$ relative to an
isotropic flag $0 \subset F_1 \subset F_2 \subset \dots \subset F_n
\subset \C^{2n}$ with $(F_n,F_n)=0$ is defined by
\begin{equation*} 
X^\la(F_\bull) = \{ V \in X \mid 
\dim(V \cap F_{n+1-\la_i}) \geq i ~\forall 1 \leq i \leq \ell(\la) \}\,.
\end{equation*}
The standard flags are defined by $E_i = \bspan{e_1,\dots,e_i}$ and
$E^\op_i = \bspan{e_{2n+1-i},\dots,e_{2n}}$.  We have $X^\la(E_\bull)
\cap X^\mu(E^\op_\bull) \neq \emptyset$ if and only if $\la \subset
\mu^\vee$, where $\mu^\vee$ is the $n$--dual of $\mu$.  Set $X^\la =
X^\la(E_\bull)$.  The classes $\cO^\la = [\cO_{X^\la}]$ form a
$\Z$-basis for the Grothendieck ring $K(X)$, which is generated as a
$\Z$-algebra by the special classes $\cO^1,\dots,\cO^n$.  A shifted
skew diagram $\theta = \mu^\vee/\la$ defines the Richardson variety
$X_\theta = X^\la(E_\bull) \cap X^\mu(E^\op_\bull)$ and the class
$\cO_\theta = [\cO_{X_\theta}] = \cO^\la \cdot \cO^\mu \in K(X)$.  For
any vector $u \in \C^{2n}$ we set $X_\theta(u) = \{ V \in X_\theta
\mid u \in V \}$, and define $\bigcup X_\theta = \bigcup_{V\in
  X_\theta} V \subset \C^{2n}$.

Assume that $a,b \geq 0$ are integers such that $0 < a+b < n$, $\la_a
> n-a-b$, and $\mu_b > n-a-b$.  Set $\la' = (\la_1+a+b-n, \dots,
\la_a+a+b-n)$, $\mu' = (\mu_1+a+b-n, \dots, \mu_b+a+b-n)$, and
$\theta' = {\mu'}^\vee/\la'$, where ${\mu'}^\vee$ is the $(a+b)$--dual
of $\mu'$.  Then $\theta'$ defines a Richardson variety $X'_{\theta'}$
in $X' = \LG(a+b,E')$, where $E' = E_{a+b}\oplus E^\op_{a+b}$ with
basis $e_1,\dots,e_{a+b},e_{2n+1-a-b},\dots,e_{2n}$.  Similarly we set
$\la'' = (\la_{a+1},\dots,\la_{\ell(\la)})$, $\mu'' =
(\mu_{b+1},\dots,\mu_{\ell(\mu)})$, and $\theta'' =
{\mu''}^\vee/\la''$, using the $(n-a-b)$--dual of $\mu''$.  This
defines the Richardson variety $X''_{\theta''}$ in $X'' = \LG(n-a-b,
E'')$, where $E'' = {E'}^\perp = \bspan{e_{a+b+1},\dots,e_{2n-a-b}}$.

\begin{lemma}\label{lem:chopC}
  {\rm(a)} We have $\bigcup X_\theta = \bigcup X'_{\theta'} \times
  \bigcup X''_{\theta''}$ in $\C^{2n} = E' \times E''$.\smallskip

  \noindent
  (b) For arbitrary vectors $u' \in E'$ and $u'' \in E''$, the natural
  inclusion $X' \times X'' \subset X$ defined by $(V',V'') \mapsto
  V'\oplus V''$ identifies $X'_{\theta'}(u') \times
  X''_{\theta''}(u'')$ with $X_\theta(u'+u'')$.
\end{lemma}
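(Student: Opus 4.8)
The plan is to follow verbatim the strategy of the type A case (Lemma~\ref{lem:chopA}) and the orthogonal case (Lemma~\ref{lem:chopB}), since the statement of Lemma~\ref{lem:chopC} is the exact Lagrangian analogue. The first observation is that the orthogonal complement decomposition $\C^{2n} = E' \oplus E''$ is an \emph{orthogonal direct sum} with respect to the symplectic form: $E' = E_{a+b} \oplus E^\op_{a+b}$ is a symplectic subspace of dimension $2(a+b)$ (the form restricted to it is non-degenerate, since $(e_i, e_{2n+1-i}) = \pm 1$ pairs $E_{a+b}$ with $E^\op_{a+b}$), and $E'' = {E'}^\perp = \bspan{e_{a+b+1},\dots,e_{2n-a-b}}$ is also a symplectic subspace, of dimension $2(n-a-b)$. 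So $X' = \LG(a+b, E')$ and $X'' = \LG(n-a-b, E'')$ are honest Lagrangian Grassmannians, and for Lagrangian subspaces there is no ``$\oplus \C$'' correction term as there was in the odd orthogonal case — this is why the map is simply $(V',V'') \mapsto V' \oplus V''$ without passing to a tilded variety $\wt X'$.

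The heart of the proof is the claim that every $V \in X_\theta$ splits as $V = V' \oplus V''$ with $V' = V \cap E' \in X'$ and $V'' = V \cap E'' \in X''$. First I would establish the dimension estimates: the hypothesis $\la_a > n-a-b$ together with the Schubert condition $\dim(V \cap E_{n+1-\la_a}) \geq a$ gives $\dim(V \cap E_{a+b}) \geq \dim(V \cap E_{n+1-\la_a}) \geq a$ since $n+1-\la_a \leq a+b$; symmetrically, $\mu_b > n-a-b$ and $\dim(V \cap E^\op_{n+1-\mu_b}) \geq b$ give $\dim(V \cap E^\op_{a+b}) \geq b$ (where we use $\la \subset \mu^\vee$ so that the Schubert conditions from $X^\mu(E^\op_\bull)$ apply to $V$). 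Hence $\dim(V \cap E') \geq \dim(V \cap E_{a+b}) + \dim(V \cap E^\op_{a+b}) \geq a+b$, as these two intersections sit in complementary coordinate subspaces. Now $V \cap E'$ is an isotropic subspace of the symplectic space $E'$ of dimension $\leq a+b$, so in fact $\dim(V \cap E') = a+b$ and $V' := V \cap E'$ is Lagrangian in $E'$. Counting dimensions, $\dim(V \cap E'') = n - (a+b) = n-a-b$, and $V'' := V \cap E''$ is then Lagrangian in $E''$; since $V' \oplus V'' \subseteq V$ has dimension $n = \dim V$, we get $V = V' \oplus V''$.

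With the splitting in hand, part (a) follows exactly as in the earlier lemmas: a vector $u = u' + u'' \in \C^{2n}$ (with $u' \in E'$, $u'' \in E''$) lies in $\bigcup X_\theta$ iff it lies in some $V = V' \oplus V'' \in X_\theta$, which — once one checks that $V' \oplus V'' \in X_\theta$ iff $V' \in X'_{\theta'}$ and $V'' \in X''_{\theta''}$ — happens iff $u' \in \bigcup X'_{\theta'}$ and $u'' \in \bigcup X''_{\theta''}$. Part (b) is the refined version of the same bookkeeping: one translates the Schubert incidence conditions defining $X_\theta$ into the product of those defining $X'_{\theta'}$ and $X''_{\theta''}$. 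The shift in the partitions — $\la' = (\la_1 + a+b-n, \dots, \la_a + a+b-n)$ and $\mu' = (\mu_1 + a+b-n, \dots, \mu_b + a+b-n)$ — is exactly what makes the flag indices match up: the condition $\dim(V \cap E_{n+1-\la_i}) \geq i$ for $i \leq a$ becomes $\dim(V' \cap E_{(a+b)+1-\la'_i}) \geq i$ after intersecting with $E'$, and similarly for the $\mu$-conditions with the opposite flag and for the remaining conditions ($i > a$, resp. $i > b$) which become the defining conditions of $X''_{\theta''}$ inside $E''$. That $\theta' = {\mu'}^\vee/\la'$ with ${\mu'}^\vee$ the $(a+b)$-dual, and $\theta'' = {\mu''}^\vee/\la''$ with the $(n-a-b)$-dual, is precisely the combinatorial reflection of splitting $\theta$ into its north-east and south-west parts.

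The main obstacle — really the only place any thought is required — is verifying that the symplectic form on $\C^{2n}$ restricts to a \emph{non-degenerate} form on $E' = E_{a+b} \oplus E^\op_{a+b}$, so that $X' = \LG(a+b,E')$ makes sense and $V \cap E'$ is genuinely Lagrangian there. With the given basis ordering this is immediate: $e_i$ pairs nontrivially only with $e_{2n+1-i}$, and for $i \leq a+b$ the partner $e_{2n+1-i}$ is among $e_{2n+1-a-b}, \dots, e_{2n}$, i.e.\ in $E^\op_{a+b}$; hence the pairing $E_{a+b} \times E^\op_{a+b} \to \C$ is a perfect pairing and the restriction to $E'$ is non-degenerate. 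Everything else is a routine, and essentially identical, repeat of the arguments already written out in the proofs of Lemma~\ref{lem:chopA} and Lemma~\ref{lem:chopB}, so the write-up can be kept very short by pointing to those.
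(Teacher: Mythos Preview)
Your proposal is correct and follows essentially the same approach as the paper's own proof: establish the dimension estimates $\dim(V\cap E_{a+b})\geq a$ and $\dim(V\cap E^\op_{a+b})\geq b$ from the Schubert conditions and the hypotheses $\la_a>n-a-b$, $\mu_b>n-a-b$, deduce that $V'=V\cap E'$ is Lagrangian in $E'$ and $V''=V\cap E''$ is Lagrangian in $E''$, and then check that the Schubert conditions translate termwise to those defining $X'_{\theta'}$ and $X''_{\theta''}$. Your added remarks (non-degeneracy of the form on $E'$, why no $\oplus\,\C$ correction is needed here, the matching of flag indices under the partition shifts) are helpful elaborations of steps the paper leaves implicit.
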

\begin{proof}
  If $V \in X_\theta$ then $\dim(V \cap E_{a+b}) \geq \dim(V \cap
  E_{n+1-\la_a}) \geq a$ and $\dim(V \cap E^\op_{a+b}) \geq \dim(V
  \cap E^\op_{n+1-\la_b}) \geq b$.  This implies that $V' = V \cap E'$
  is a maximal isotropic subspace in $E'$, so $V' \in X'$.  It also
  follows that $\dim(V \cap E'') = \dim(V \cap {E'}^\perp) = n-a-b$,
  so $V'' = V \cap E'' \in X''$.  Given arbitrary points $V' \in X'$
  and $V'' \in X''$, it follows from the definitions that $V'\oplus
  V'' \in X_\theta(u'+u'')$ if and only $V' \in X'_{\theta'}(u')$ and
  $V'' \in X''_{\theta''}(u'')$.  The lemma follows from this.
\end{proof}

\begin{lemma}\label{lem:fiberC}
  {\rm(a)} The set $\bigcup X_\theta \subset \C^{2n}$ is a scheme
  theoretic complete intersection with rational singularities.  It has
  dimension $n+d(\theta)$ and is defined by $N'(\theta)$ quadratic
  equations and $n-d(\theta)-N'(\theta)$ linear equations.\smallskip

  \noindent
  {\rm(b)} For all vectors $u$ in a dense open subset of $\bigcup
  X_\theta$ we have $X_\theta(u) \cong X_{\wb\theta}$.
\end{lemma}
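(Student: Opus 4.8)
The plan is to follow the proof of Lemma~\ref{lem:fiberB} almost verbatim, the one structural change being that the symplectic form is alternating, so that $(u,u)=0$ for \emph{every} $u\in\C^{2n}$; this is exactly why a quadratic defining equation is contributed only by the components of $\theta$ that avoid the diagonal, giving the count $N'(\theta)$ rather than $N(\theta)$. First I would use Lemma~\ref{lem:chopC} to reduce to the case that $\theta$ is connected, so that $N'(\theta)\in\{0,1\}$, using that a product of scheme-theoretic complete intersections with rational singularities is again such a thing, and that $d(\cdot)$, $N'(\cdot)$ and the operation $\theta\mapsto\wb\theta$ all respect the north-east/south-west splitting of Lemma~\ref{lem:chopC}, exactly as in the type A and orthogonal cases. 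For connected $\theta$ the same combinatorial argument as in Lemma~\ref{lem:fiberB} shows that $\theta$ meets the diagonal if and only if $\ell(\la)+\ell(\mu)<n$, and that otherwise $\ell(\la)+\ell(\mu)=n$ with $\la_{\ell(\la)}=\mu_{\ell(\mu)}=1$ and $\theta$ disjoint from the diagonal.

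In the first case $\ell(\la)+\ell(\mu)<n$, so $d(\theta)=n$ and $N'(\theta)=0$; I claim $\bigcup X_\theta=\C^{2n}$. Since every vector of $\C^{2n}$ lies in some maximal isotropic subspace, it suffices to produce an isomorphism $X_\theta(u)\cong X_{\wb\theta}$ for a generic $u$. Picking $u$ with all coordinates nonzero and $(u_i,u'_i)\neq0$ for $1\le i\le n$, where $u_i$ and $u'_i$ denote the projections of $u$ onto $E_i$ and $E^\op_{2n-i}$, I would set $\wb E=\bspan{u}^\perp/\bspan{u}$, a symplectic space of dimension $2n-2$ (the form descends because $u\in\bspan{u}^\perp$), define the induced isotropic flags $\wb E_i=((E_{i+1}+\bspan{u})\cap\bspan{u}^\perp)/\bspan{u}$ and $\wb E^\op_i=((E^\op_{i+1}+\bspan{u})\cap\bspan{u}^\perp)/\bspan{u}$, use the identity $(E_{i+1}+\bspan{u})\cap((E^\op_{i+1})^\perp+\bspan{u})=\bspan{u_{i+1},u'_{i+1}}$ together with $(u_{i+1},u'_{i+1})\neq0$ to see that $\wb E_\bull$ and $\wb E^\op_\bull$ are opposite, identify $\wb X=\LG(n-1,\wb E)$ with the set of $V\in X$ containing $u$, and check that $X^\la(E_\bull)\cap\wb X=\wb X^\la(\wb E_\bull)$ and $X^\mu(E^\op_\bull)\cap\wb X=\wb X^\mu(\wb E^\op_\bull)$, whence $X_\theta(u)=\wb X_{\wb\theta}$.

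In the second case $\ell(\la)+\ell(\mu)=n$, so $d(\theta)=n-1$ and $N'(\theta)=1$. For $V\in X_\theta$ we have $\dim(V\cap E_n)\ge\ell(\la)$ and $\dim(V\cap E^\op_n)\ge\ell(\mu)$, and since $E_n$ and $E^\op_n$ are complementary Lagrangians this forces $V=(V\cap E_n)\oplus(V\cap E^\op_n)$. Writing $u=u^{(1)}+u^{(2)}$ with $u^{(1)}\in E_n$ and $u^{(2)}\in E^\op_n$, it follows that $u^{(1)},u^{(2)}\in V$, so $U:=\bspan{u^{(1)},u^{(2)}}$ is isotropic, i.e.\ $(u^{(1)},u^{(2)})=\sum_{j=1}^n x_jx_{2n+1-j}=0$. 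Hence $\bigcup X_\theta$ lies in the quadric hypersurface $Q=\{\sum_{j=1}^n x_jx_{2n+1-j}=0\}$, which is a scheme-theoretic complete intersection with rational singularities: the displayed quadratic form is nondegenerate, hence irreducible since $2n\ge4$, so $Q$ is cut out by the single irreducible equation $\sum x_jx_{2n+1-j}$, and every quadric hypersurface has rational singularities. For the reverse inclusion and part (b) simultaneously, I would take a generic $u\in Q$, note that $U$ is then a $2$-dimensional isotropic subspace contained in every $V\in X_\theta(u)$, set $\wb E=U^\perp/U$ (symplectic of dimension $2n-4$), repeat the flag construction with $U$ in place of $\bspan{u}$, check opposedness, identify $\wb X=\LG(n-2,\wb E)$ with the set of $V\in X$ containing $U$, and verify that $X^\la(E_\bull)\cap\wb X=\wb X^{\wb\la}(\wb E_\bull)$ and $X^\mu(E^\op_\bull)\cap\wb X=\wb X^{\wb\mu}(\wb E^\op_\bull)$ with $\wb\la=(\la_1-1,\dots,\la_{\ell(\la)}-1)$ and $\wb\mu=(\mu_1-1,\dots,\mu_{\ell(\mu)}-1)$, so that $X_\theta(u)=\wb X_{\wb\theta}$. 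Since $\wb X_{\wb\theta}$ is a nonempty Richardson variety, this gives $u\in\bigcup X_\theta$ for generic $u\in Q$; hence $\bigcup X_\theta=Q$ (as schemes, $Q$ being reduced), and its dimension is $2n-1=n+d(\theta)$.

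The step I expect to be most delicate is, as in Lemma~\ref{lem:fiberB}, the verification that the induced flags $\wb E_\bull$ and $\wb E^\op_\bull$ are opposite isotropic flags and that the two Schubert incidence conditions pull back exactly to the Schubert conditions for $\wb\la$ and $\wb\mu$, which amounts to careful bookkeeping with the genericity hypotheses on $u$ (resp.\ on $U$). This is formally identical to the orthogonal computation, so no genuinely new difficulty should arise; the only conceptually new input over the orthogonal case is the harmless fact that $(u,u)$ is identically zero, which streamlines rather than complicates the argument.
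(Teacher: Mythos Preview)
Your proposal is correct and follows essentially the same approach as the paper's proof: reduce to a single connected component via Lemma~\ref{lem:chopC}, then split into the diagonal-meeting case (quotient by $\bspan{u}$, giving $\bigcup X_\theta=\C^{2n}$) and the non-diagonal case (quotient by the two-dimensional isotropic $U=\bspan{u_n,u'_n}$, giving a quadric hypersurface), checking in each case that the induced flags are opposite and that the Schubert conditions descend. The only cosmetic point is that your sentence ``since every vector of $\C^{2n}$ lies in some maximal isotropic subspace, it suffices\ldots'' is not the relevant justification; what you actually use is that $\bigcup X_\theta$ is closed and irreducible, so exhibiting $X_\theta(u)\cong X_{\wb\theta}\neq\emptyset$ on a dense open subset forces $\bigcup X_\theta=\C^{2n}$.
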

\begin{proof}
  Using Lemma~\ref{lem:chopC} we may assume that for all integers $a,b
  \geq 0$ with $0 < a+b < n$ we have $\la_a \leq n-a-b$ or $\mu_b \leq
  n-a-b$.  This implies that $\theta$ has exactly one component.
  Given any vector $u = (x_1,\dots,x_{2n}) \in \C^{2n}$ we will write
  $u_i = (x_1,\dots,x_i,0,\dots,0) \in \C^{2n}$ and $u'_i =
  (0,\dots,0,x_{i+1},\dots,x_{2n}) \in \C^{2n}$ for its projections to
  $E_i$ and $E^\op_{2n-i}$.
  
  Assume first that $\ell(\la) + \ell(\mu) < n$.  In this case
  $\theta$ intersects the diagonal, so $N'(\theta) = 0$.  Since
  $\la_1<n$ and $\mu_1<n$ we also have $d(\theta)=n$.  Let $u =
  (x_1,\dots,x_{2n}) \in \C^{2n}$ be any vector such that $x_i \neq 0$
  and $(u_i,u'_i) \neq 0$ for $1 \leq i \leq 2n$.  It is enough to
  show that $X_\theta(u) \cong X_{\wb\theta}$.  Set $\wb E =
  u^\perp/\bspan{u}$ and define isotropic flags in this vector space
  by $\wb E_i = ((E_{i+1} + \bspan{u}) \cap
  \bspan{u}^\perp)/\bspan{u}$ and $\wb E^\op_i = ((E^\op_{i+1} +
  \bspan{u}) \cap \bspan{u}^\perp)/\bspan{u}$.  For each $i<n$ we have
  $(E_{i+1} + \bspan{u}) \cap ((E^\op_{i+1})^\perp + \bspan{u}) =
  \bspan{u_{i+1},u'_{i+1}}$.  By the choice of $u$ we have
  $(u_{i+1},u'_{i+1}) \neq 0$, which implies that $\wb E_i \cap (\wb
  E^\op_i)^\perp = 0$.  Similarly we obtain $(\wb E_i)^\perp \cap
  E^\op_i = 0$, so the flags $\wb E_\bull$ and $\wb E^\op_\bull$ are
  opposite.  Identify $\wb X = \LG(n-1,\wb E)$ with the set of point
  $V \in X$ for which $u \in V$.  Then we have $X^\la(E_\bull) \cap
  \wb X = \wb X^\la(\wb E_\bull)$ and $X^\mu(E^\op_\bull) \cap \wb X =
  \wb X^\mu(\wb E^\op_\bull)$, so $X_\theta(u) = X_\theta \cap \wb X =
  \wb X^\la(\wb E_\bull) \cap \wb X^\mu(\wb E^\op_\bull) = \wb
  X_{\wb\theta}$.
  
  Otherwise we have $\ell(\la) + \ell(\mu) = n$ and $\la_{\ell(\la)} =
  \mu_{\ell(\mu)} = 1$.  This implies that $\theta$ is disjoint from
  the diagonal, so $N'(\theta) = 1$ and $d(\theta) = n-1$.  For any
  point $V \in X_\theta$ we have $\dim(V \cap E_n) \geq \ell(\la)$ and
  $\dim(V \cap E^\op_n) \geq \ell(\mu)$, so $V = (V \cap E_n) \oplus
  (V \cap E^\op_n)$.  It follows that every vector $u \in \bigcup
  X_\theta$ satisfies $(u_n,u'_n) = 0$.  We will show that $\bigcup
  X_\theta \subset \C^{2n}$ is defined by this equation.  Let $u \in
  \C^{2n}$ be any vector such that $x_i \neq 0$ for $1 \leq i \leq
  2n$, $(u_n,u'_n)=0$, and $(u_i,u'_i)\neq 0$ for $1 \leq i \leq n-1$.
  It is enough to show that $X_\theta(u) \cong X_{\wb\theta}$.  Set
  $\wb E = U^\perp/U$ where $U = \bspan{u_n,u'_n}$, and define
  isotropic flags in $\wb E$ by $\wb E_i = ((E_{i+1} + U) \cap
  U^\perp)/U$ and $\wb E^\op_i = ((E^\op_{i+1} + U) \cap U^\perp)/U$
  for $1 \leq i \leq n-2$.  For each $i \leq n-2$ we have $(E_{i+1} +
  U) \cap ((E^\op_{i+1})^\perp + U) =
  \bspan{u_n,u'_n,u_{i+1},u'_{i+1}}$, so our choice of $u$ implies
  that $\wb E_i \cap (\wb E^\op_i)^\perp = 0$.  Identify $\wb X =
  \LG(n-2,\wb E)$ with the set of points $V \in X$ for which $U
  \subset V$.  Then we have $X^\la(E_\bull) \cap \wb X = \wb
  X^{\wb\la}(\wb E_\bull)$ and $X^\mu(E^\op_\bull) \cap \wb X = \wb
  X^{\wb\mu}(\wb E^\op_\bull)$, where $\wb\la =
  (\la_1-1,\la_2-1,\dots,\la_{\ell(\la)}-1)$ and $\wb\mu =
  (\mu_1-1,\mu_2-1,\dots,\mu_{\ell(\mu)}-1)$.  We conclude that
  $X_\theta(u) = X_\theta \cap \wb X = \wb X^{\wb\la}(\wb E_\bull)
  \cap \wb X^{\wb\mu}(\wb E^\op_\bull) = \wb X_{\wb\theta}$, as
  required.
\end{proof}

\begin{prop}\label{prop:tripleC}
  For $0 \leq p \leq n$ we have $\euler{X}(\cO_\theta \cdot \cO^p) =
  h(N'(\theta), d(\theta)-p)$.
\end{prop}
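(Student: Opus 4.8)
The plan is to follow the exact same strategy used in the proof of Proposition~\ref{prop:tripleB}, replacing the orthogonal Grassmannian $\OG(n,2n+1)$ with the Lagrangian Grassmannian $\LG(n,2n)$, the quadric $S \subset \P^{2n}$ of isotropic lines with the entire projective space $\P^{2n-1}$ (since every line in $\C^{2n}$ is isotropic for a symplectic form), and invoking Lemma~\ref{lem:fiberC} in place of Lemma~\ref{lem:fiberB}. First I would dispose of the trivial cases: $\euler{X}(\cO_\theta \cdot \cO^0) = \euler{X}(\cO_\theta) = 1$ because a Richardson variety is irreducible, rational, and has rational singularities, and $\euler{X}(\cO_\emptyset \cdot \cO^p) = \delta_{p,0}$; these match $h(N'(\emptyset), -p) = h(0,-p)$. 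So one may assume $\theta \neq \emptyset$ and $p \geq 1$.

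Next I would set up the incidence variety $Z = \IF(1,n;2n)$ of flags $L \subset V \subset \C^{2n}$ with $\dim L = 1$ and $V \in X$, with projections $\pi_1 : Z \to \P^{2n-1}$ and $\pi_n : Z \to X$. As in the orthogonal case, $\pi_n : \pi_1^{-1}(\P(E_{n+1-p})) \to X^p$ is a birational isomorphism of varieties with rational singularities and $\pi_1$ is flat, so ${\pi_n}_* \pi_1^* [\cO_{\P(E_{n+1-p})}] = \cO^p$ in $K(X)$. Since $\bigcup X_\theta$ is the affine cone over $\pi_1(\pi_n^{-1}(X_\theta))$, Lemma~\ref{lem:fiberC}(a) shows the latter has rational singularities, and Lemma~\ref{lem:gysin} together with Lemma~\ref{lem:fiberC}(b) gives ${\pi_1}_* \pi_n^* \cO_\theta = [\cO_{\pi_1(\pi_n^{-1}(X_\theta))}] \in K(\P^{2n-1})$. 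The projection formula then yields
\begin{equation*}
  \euler{X}(\cO_\theta \cdot \cO^p) =
  \euler{\P^{2n-1}}([\cO_{\pi_1(\pi_n^{-1}(X_\theta))}] \cdot [\cO_{\P(E_{n+1-p})}]) \,.
\end{equation*}

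Then I would use Lemma~\ref{lem:fiberC} to identify $\pi_1(\pi_n^{-1}(X_\theta))$ as the projectivization of the complete intersection $\bigcup X_\theta$, which is cut out by $N'(\theta)$ quadrics and $n - d(\theta) - N'(\theta)$ linear forms in $\C^{2n}$. Since a quadric in $\P^{2n-1}$ has Grothendieck class $2t - t^2$ and a linear subspace of codimension $i$ has class $t^i$, and since this is a proper (complete) intersection of Cohen--Macaulay subvarieties so that the structure-sheaf classes multiply, we get $[\cO_{\pi_1(\pi_n^{-1}(X_\theta))}] \cdot [\cO_{\P(E_{n+1-p})}] = t^{(n - d(\theta) - N'(\theta)) + (n+1-p)}(2-t)^{N'(\theta)} = t^{2n - d(\theta) - N'(\theta) + 1 - p}(2-t)^{N'(\theta)}$ in $K(\P^{2n-1}) = \Z[t]/(t^{2n})$. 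Applying $\euler{\P^{2n-1}}$, which sends $t^i \mapsto 1$ for $0 \le i \le 2n-1$ and (together with $t^{2n} = 0$) annihilates higher powers, and expanding $(2-t)^{N'(\theta)} = \sum_j (-1)^j \binom{N'(\theta)}{j} 2^{N'(\theta)-j} t^j$, the Euler characteristic picks up exactly the terms with exponent at most $2n-1$, namely those with $j \le d(\theta) - p$. This sum is $\sum_{j=0}^{d(\theta)-p} (-1)^j 2^{N'(\theta)-j}\binom{N'(\theta)}{j} = h(N'(\theta), d(\theta)-p)$ by the definition~(\ref{eqn:hdef}), completing the proof.

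I expect the only genuinely delicate point to be confirming the bookkeeping in Lemma~\ref{lem:fiberC}(a) carries over cleanly — in particular that $\bigcup X_\theta$ really is a scheme-theoretic complete intersection of the stated codimension so that the structure sheaf class is the product of the hypersurface classes — but since Lemma~\ref{lem:fiberC} is already established, the remaining work is the routine $K(\P^{2n-1})$ computation above, and the contrast with the orthogonal case is simply that $N'$ replaces $N^-$ (equivalently, $N' = N$ here since there is no quadric $S$ absorbing one equation) and the ambient dimension drops by one.
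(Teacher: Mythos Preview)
Your approach is exactly the paper's: set up the incidence variety $Z=\IF(1,n;2n)$, push $\cO^p$ forward from $\P^{2n-1}$, push $\cO_\theta$ forward to $\P^{2n-1}$ via Lemma~\ref{lem:gysin} and Lemma~\ref{lem:fiberC}, and finish with a computation in $K(\P^{2n-1})=\Z[t]/(t^{2n})$.  The strategy and all the geometric inputs are correct.

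However, your final $K(\P^{2n-1})$ computation contains two arithmetic slips that happen not to cancel, so the displayed exponent is wrong even though your stated cutoff $j\le d(\theta)-p$ and final answer are right.  First, a quadric has class $2t-t^2=t(2-t)$, so $N'(\theta)$ quadrics contribute $t^{N'(\theta)}(2-t)^{N'(\theta)}$, not just $(2-t)^{N'(\theta)}$; you dropped the factor $t^{N'(\theta)}$.  Second, $\P(E_{n+1-p})$ has projective dimension $n-p$, hence codimension $(2n-1)-(n-p)=n+p-1$ in $\P^{2n-1}$, not $n+1-p$.  With both corrections the exponent becomes
\[
(n-d(\theta)-N'(\theta)) + N'(\theta) + (n+p-1) \;=\; 2n - d(\theta) + p - 1,
\]
which is the paper's expression, and then the surviving terms in $\euler{\P^{2n-1}}$ are indeed those with $j\le d(\theta)-p$, giving $h(N'(\theta),d(\theta)-p)$.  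As written, your exponent $2n-d(\theta)-N'(\theta)+1-p$ would give the bound $j\le d(\theta)+N'(\theta)+p-2$, which does not yield the claimed sum.

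One small conceptual point in your closing remark: it is not that ``$N'=N$'' in the Lagrangian case.  Rather, $N'(\theta)$ counts only the components disjoint from the diagonal, and can be strictly smaller than $N(\theta)$.  The contrast with the orthogonal case is that there every component of $\theta$ contributes a quadric (giving $N(\theta)$ in Lemma~\ref{lem:fiberB}) and one of them is absorbed by the ambient quadric $S\subset\P^{2n}$, leaving $N^-(\theta)$; here a component meeting the diagonal contributes no quadric at all (Lemma~\ref{lem:fiberC}), so the count is $N'(\theta)$ from the start.
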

\begin{proof}
  Let $Z = \IF(1,n;2n)$ be the variety of two-step flags $L \subset V
  \subset \C^{2n}$ such that $\dim(L)=1$ and $V \in X$, and let $\pi_1
  : Z \to \P^{2n-1}$ and $\pi_n : Z \to X$ be the projections.  Then
  ${\pi_n}_* \pi_1^* [\cO_{\P(E_{n+1-p})}] = \cO^p \in K(X)$, and
  Lemmas \ref{lem:fiberC} and \ref{lem:gysin} imply that ${\pi_1}_*
  \pi_n^* \cO_\theta = [\cO_{\pi_1(\pi_n^{-1}(X_\theta))}] \in
  K(\P^{2n-1})$.  The projection formula gives
  \begin{equation*}
    \euler{X}(\cO_\theta \cdot \cO^p) =
    \euler{\P^{2n-1}}([\cO_{\pi_1(\pi_n^{-1}(X_\theta))}] \cdot
    [\cO_{\P(E_{n+1-p})}]) \,.
  \end{equation*}
  Lemma~\ref{lem:fiberC} shows that
  $[\cO_{\pi_1(\pi_n^{-1}(X_\theta))}] \cdot [\cO_{\P(E_{n+1-p})}] =
  t^{2n-d(\theta)+p-1} \cdot (2-t)^{N'(\theta)} \in K(\P^{2n-1}) =
  \Z[t]/(t^{2n})$ and the proposition follows from this.
\end{proof}

Given a shifted skew diagram $\theta$ and $p \in \Z$ we define
\begin{equation}\label{eqn:hsumC}
  \cC(\theta,p) = 
  \sum_{\theta' \subset \varphi \subset \theta}
  (-1)^{|\theta|-|\varphi|}\, h(N'(\varphi), d(\varphi)-p)
\end{equation}
where the sum is over all shifted skew diagrams $\varphi$ obtained by
removing a subset of the south-east corners from $\theta$.  The dual
Schubert classes $\cO_\nu^*$ in the $K$-theory of $X = \LG(n,2n)$ is
defined by the same expression (\ref{eqn:dualB}) as for $\OG(n,2n+1)$.
The identity $\euler{X}(\cO_\nu^* \cdot \cO^\mu) = \delta_{\nu,\mu}$
and Proposition~\ref{prop:tripleC} imply the following.

\begin{cor}
  Let $\la \subset \nu$ be strict partitions with $\nu_1 \leq n$ and
  let $0 \leq p \leq n$.  Then $c^\nu_{\la,p} = \cC(\nu/\la, p)$.
\end{cor}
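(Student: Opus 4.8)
The plan is to argue exactly as for Grassmannians of type A (Corollary~\ref{cor:cisA}) and for maximal orthogonal Grassmannians: I would combine the dual-basis identity $\euler{X}(\cO_\nu^* \cdot \cO^\mu) = \delta_{\nu,\mu}$ with the triple-intersection formula of Proposition~\ref{prop:tripleC} and a purely combinatorial rewriting of the resulting sum.

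First I would note that, since the classes $\cO^\mu$ form a $\Z$-basis of $K(X)$, pairing the Pieri expansion $\cO^\la \cdot \cO^p = \sum_\mu c^\mu_{\la,p}\,\cO^\mu$ against $\cO_\nu^*$ gives $c^\nu_{\la,p} = \euler{X}(\cO^\la \cdot \cO^p \cdot \cO_\nu^*)$; in particular $c^\nu_{\la,p} = 0$ unless $\la \subseteq \nu$. Expanding $\cO_\nu^*$ via (\ref{eqn:dualB}) and using the product formula $\cO^\la \cdot \cO_\tau = [\cO_{X^\la(E_\bull) \cap X^{\tau^\vee}(E^\op_\bull)}]$ (the identification of products of Richardson classes used throughout section~\ref{sec:typeC}), which equals $\cO_{\tau/\la}$ when $\la \subseteq \tau$ and vanishes otherwise, I would obtain
\begin{equation*}
  c^\nu_{\la,p} = \sum_{\substack{\la \subseteq \tau \subseteq \nu \\ \nu/\tau\ \text{rook strip}}} (-1)^{|\nu/\tau|}\, \euler{X}(\cO_{\tau/\la} \cdot \cO^p) \,.
\end{equation*}
It then remains to match this with the definition (\ref{eqn:hsumC}) of $\cC(\nu/\la,p)$. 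Setting $\theta = \nu/\la$, the strict partitions $\tau$ with $\la \subseteq \tau \subseteq \nu$ and $\nu/\tau$ a rook strip are precisely the diagrams $\nu \ssm S$, where $S$ ranges over subsets of the south-east corners of $\theta$, the correspondence being $\tau \mapsto \varphi = \tau/\la = \theta \ssm S$, under which $|\nu/\tau| = |S| = |\theta| - |\varphi|$. Substituting $\euler{X}(\cO_\varphi \cdot \cO^p) = h(N'(\varphi), d(\varphi) - p)$ from Proposition~\ref{prop:tripleC} then identifies the displayed sum with $\cC(\nu/\la,p)$, proving the corollary.

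The only point needing care — and the main, if modest, obstacle — is the combinatorial bijection in the last step: one must check that a box $B$ of $\theta = \nu/\la$ having no box of $\theta$ directly below it or directly to its right is the same thing as a box of $\nu$ whose removal leaves a strict partition still containing $\la$, and that an arbitrary subset of such boxes may be removed simultaneously. This uses that each column (and each row) of a shifted skew diagram is an interval of rows (resp.\ columns), so that distinct south-east corners automatically lie in distinct rows and columns; with the conventions fixed in section~\ref{sec:typeB} it is routine.
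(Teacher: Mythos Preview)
Your argument is correct and is precisely the approach the paper indicates: the corollary is stated as an immediate consequence of the dual-basis identity $\euler{X}(\cO_\nu^* \cdot \cO^\mu) = \delta_{\nu,\mu}$ together with Proposition~\ref{prop:tripleC}, and you have simply spelled out the details of that deduction, including the routine bijection between rook strips $\nu/\tau$ (with $\la \subseteq \tau \subseteq \nu$) and subsets of south-east corners of $\nu/\la$.
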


The following result is our recursive Pieri formula for Lagrangian
Grassmannians.

\begin{thm}\label{thm:pieriCrec}
  Let $\theta$ be a shifted skew diagram and let $p \in \Z$.  If
  $\theta$ is not a rim then $\cC(\theta,p)=0$.  If $p \leq 0$ then
  $\cC(\theta,p) = \delta_{\theta,\emptyset}$, and $\cC(\emptyset,p)$
  is equal to one if $p\leq 0$ and is zero otherwise.  If $\theta \neq
  \emptyset$ is a rim and $p>0$, then $\cC(\theta,p)$ is determined by
  the following rules, with $a = |\theta|-|\wh\theta|$.\smallskip

  \noindent{\rm(i)} If $\wh\theta = \emptyset$ and $\theta$ meets the
  diagonal, then we have $\cC(\theta,p) = \delta_{p,|\theta|} -
  \delta_{p,|\theta|-1}$ if $\theta$ is a column, and $\cC(\theta,p) =
  \delta_{p,|\theta|}$ otherwise.\smallskip

  \noindent{\rm(ii)} If $\wh\theta = \emptyset$ and $\theta$ is
  disjoint from the diagonal, then $\cC(\theta,p) = 2\,
  \delta_{p,|\theta|} - \delta_{p,|\theta|-1}$.\smallskip
  
  \noindent{\rm(iii)} If $\wh\theta \neq \emptyset$ and the north-east
  arm of $\theta$ is connected to $\wh\theta$, then we have
  $\cC(\theta,p) = \cC(\wh\theta,p-a) -
  \cC(\wh\theta,p-a+1)$.\smallskip
  
  \noindent{\rm(iv)} Finally assume that $\wh\theta \neq \emptyset$
  and the north-east arm of $\theta$ is not connected to $\wh\theta$.
  If $a=1$ then $\cC(\theta,p) = 2\, \cC(\wh\theta,p-a) - 2\,
  \cC(\wh\theta,p-a+1)$, while if $a>1$ we have $\cC(\theta,p) = 2\,
  \cC(\wh\theta,p-a) - 3\, \cC(\wh\theta,p-a+1) +
  \cC(\wh\theta,p-a+2)$.
\end{thm}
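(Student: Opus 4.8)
The plan is to mimic the proof of Theorem~\ref{thm:pieriBrec} exactly, since the definition of $\cC(\theta,p)$ in (\ref{eqn:hsumC}) differs from that of $\cB(\theta,p)$ in (\ref{eqn:hsumB}) only by using $N'(\varphi)$ in place of $N^-(\varphi)$. First I would dispose of the easy cases: if $\theta$ is not a rim, the same cancellation argument as in Lemma~\ref{lem:vanishB}(a) applies verbatim (choosing a south-east corner $B$ with a box strictly north-west of it, so that $N'$ and $d$ are both unchanged by adding $B$, hence the $\varphi$ and $\varphi\cup B$ terms cancel), giving $\cC(\theta,p)=0$; and the cases $p\le 0$ or $\theta=\emptyset$ follow directly from $\euler{X}(\cO_\theta)=1$ and $\euler{X}(\cO_\emptyset\cdot\cO^p)=\delta_{p,0}$ together with the Corollary preceding the theorem. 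So one reduces to $\theta\neq\emptyset$ a nonempty rim with $p>0$, whence $d(\theta)=|\theta|$.

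Next I would handle the base cases (i) and (ii), where $\wh\theta=\emptyset$, i.e. $\theta$ is a single row or single column. Here $\theta'$ is obtained by deleting the unique south-east corner, so $\cC(\theta,p)=h(N'(\theta),|\theta|-p)-h(N'(\theta'),|\theta'|-p)$. If $\theta$ meets the diagonal then it contains a diagonal box, so $N'(\theta)=N'(\theta')=0$ (treating $\theta'=\emptyset$ as having $N'=0$), giving $h(0,|\theta|-p)-h(0,|\theta|-1-p)$; since $h(0,b)$ is $1$ for $b\ge 0$ and $0$ for $b<0$, this equals $\delta_{p,|\theta|}$ when $\theta$ is a row. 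When $\theta$ is a column meeting the diagonal, the box removed is the diagonal box itself, so $\theta'$ still contains a diagonal box only if $|\theta|\ge 2$; the point is that one must check whether removing the SE corner removes the sole diagonal box, and a single-column $\theta$ meeting the diagonal has its diagonal box at the bottom, which \emph{is} the SE corner, so $\theta'$ has no diagonal box and $N'(\theta')=1$ (if $\theta'\neq\emptyset$), yielding $\delta_{p,|\theta|}-\delta_{p,|\theta|-1}$. If $\theta$ is disjoint from the diagonal then $N'(\theta)=1$ and $N'(\theta')=1$ (or $\theta'=\emptyset$), giving $h(1,|\theta|-p)-h(1,|\theta|-1-p)=2\delta_{p,|\theta|}-\delta_{p,|\theta|-1}$ by (\ref{eqn:hdef}), which is (ii). I would also record an analogue of Lemma~\ref{lem:vanishB}(b): if $\theta$ contains a row or column with $a$ boxes then $\cC(\theta,p)=0$ for $p<a$, by the same argument (the relevant $h$-values are all $1$).

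Finally, for the recursive cases (iii) and (iv) with $\wh\theta\neq\emptyset$, let $\psi$ be the north-east arm, $a=|\psi|$, and $B\in\theta\ssm\theta'$ the northernmost south-east corner, with $\psi'=\psi\ssm B$ and $\wh\theta'=\wh\theta\cap\theta'$. I would split the sum (\ref{eqn:hsumC}) according to whether a term's restriction to the arm is $\psi$ or $\psi'$ and whether $B$ is included, exactly as in Theorem~\ref{thm:pieriBrec}, and compute the change in $N'$ and $d$. When $\psi$ is connected to $\wh\theta$: if $\psi$ is a horizontal extension then $N'(\varphi\cup\psi)=N'(\varphi\cup\psi')=N'(\varphi)$ and $d(\varphi\cup\psi)=d(\varphi)+a$, $d(\varphi\cup\psi')=d(\varphi)+a-1$, giving $\cC(\theta,p)=\cC(\wh\theta,p-a)-\cC(\wh\theta,p-a+1)$ directly; if $\psi$ is a vertical extension one pairs terms $\varphi\cup\psi$ with $\varphi\cup B\cup\psi$ and uses (\ref{eqn:hrel}) in the same bookkeeping as the orthogonal case, again arriving at (iii). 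When $\psi$ is disconnected, it is itself a component with no diagonal box, so $N'(\varphi\cup\psi)=N'(\varphi)+1$ and $N'(\varphi\cup\psi')=N'(\varphi)$ (since $\psi'$, being nonempty unless $a=1$, is still a component without a diagonal box — wait, $\psi'$ has $a-1$ boxes and no diagonal box so $N'(\varphi\cup\psi')=N'(\varphi)+1$ when $a>1$ and $=N'(\varphi)$ when $a=1$); applying (\ref{eqn:hrel}) twice when $a>1$ turns $h(N^-(\varphi)+1,\cdot)$ differences into the combination $2\cC(\wh\theta,p-a)-3\cC(\wh\theta,p-a+1)+\cC(\wh\theta,p-a+2)$, and once when $a=1$ into $2\cC(\wh\theta,p-a)-2\cC(\wh\theta,p-a+1)$. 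One subtlety to watch, just as in the orthogonal proof, is the boundary contribution from $\varphi=\emptyset$ when $\wh\theta$ is a rook strip (so $\wh\theta'=\emptyset$); here I would invoke the vanishing lemma analogue and the fact that $\theta$ disconnected from $\wh\theta$ with $p\ge a$ is automatic under (iv) (for $p<a$ vanishing already applies), checking that the extra $h(0,\cdot)$ terms cancel or vanish.

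The main obstacle I anticipate is bookkeeping with $N'$ rather than $N^-$, because $N'$ counts components \emph{without diagonal boxes} rather than one fewer than the total number of components; the two agree when $\theta$ is connected and meets the diagonal (both give $0$) and when $\theta$ is connected and misses the diagonal (both give... $N^-=0$ but $N'=1$, so they differ!). Thus the Lagrangian recursion genuinely differs from the orthogonal one in the base cases and whenever diagonal-meeting versus diagonal-avoiding status changes, and the real care is in tracking when removing a south-east corner (in particular the diagonal box of a column) changes a component from diagonal-meeting to diagonal-avoiding. This is exactly why case (i) bifurcates into ``column'' versus ``otherwise'' and why (ii) has the factor $2$: the arguments are elementary but must be carried out with attention to the diagonal box, which is the one place the type C combinatorics is not a formal copy of type B.
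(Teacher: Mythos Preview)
Your approach is correct and essentially identical to the paper's: both dispose of the non-rim and $p\le 0$ cases by the same cancellation and Euler-characteristic arguments, both compute the base cases (i) and (ii) directly from $\cC(\theta,p)=h(N'(\theta),|\theta|-p)-h(N'(\theta'),|\theta'|-p)$, and both derive (iii) and (iv) by decomposing the sum according to the north-east arm and applying the identity (\ref{eqn:hrel}).

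One place where you over-anticipate difficulty: in case (iv) you plan to handle the boundary term $\varphi=\emptyset$ separately when $\wh\theta$ is a rook strip, and to invoke an analogue of Lemma~\ref{lem:vanishB}(b) for $p<a$. In the Lagrangian setting this is unnecessary. The reason the orthogonal proof needed that detour is that $N^-(\emptyset)=0$ breaks the pattern $N^-(\varphi\cup\psi)=N^-(\varphi)+1$ when $\varphi=\emptyset$; but $N'$ has no such truncation, so $N'(\emptyset\cup\psi)=N'(\psi)=1=N'(\emptyset)+1$ (the disconnected north-east arm carries no diagonal box), and the term-by-term identity you derive via (\ref{eqn:hrel}) holds for $\varphi=\emptyset$ as well. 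Consequently the recursion in (iv) holds for all $p>0$ without a separate $p<a$ clause, exactly as stated. The paper's proof simply sums over all $\varphi$ uniformly and is done; your extra steps would work but are not needed.
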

\begin{proof}
  If $\theta$ is not a rim, then let $B \in \theta$ be a south-east
  corner such that $\theta$ contains a box strictly north and strictly
  west of $B$.  For any shifted skew diagram $\varphi$ with $\theta'
  \subset \varphi \subset \theta \ssm \{B\}$ we obtain
  $h(N'(\varphi),d(\varphi)-p) = h(N'(\varphi \cup B),d(\varphi \cup
  B)-p)$ and $\cC(\theta,p)=0$.  We also have $\cC(\emptyset,p) =
  h(0,-p)$ which is equal to one when $p\leq 0$ and zero otherwise.
  If $\theta \neq \emptyset$ and $p \leq 0$, then $\cC(\theta,p) =
  \sum_{\theta' \subset \varphi \subset \theta}
  (-1)^{|\theta|-|\varphi|} = 0$.  If $\theta$ is a row or a column
  and $p > 0$, then $\cC(\theta,p) = h(N'(\theta),|\theta|-p) -
  h(N'(\theta'),|\theta'|-p)$, from which (i) and (ii) are easily
  checked.  We can therefore assume that $\theta$ is a rim, $\wh\theta
  \neq \emptyset$, and $p>0$.  Let $\psi$ be the north-east arm of
  $\theta$, let $B \in \theta\ssm\theta'$ be the south-east corner
  farthest to the north, and set $\psi' = \psi \ssm B$ and $\wh\theta'
  = \wh\theta \cap \theta'$.  We have $a = |\psi|$.  Let $\varphi$ be
  any rim such that $\wh\theta' \subset \varphi \subset \wh\theta \ssm
  B$.

  Assume that $\psi$ is connected to $\wh\theta$.  If $\psi$ is a is a
  row attached to the right side of the upper-right box of
  $\wh\theta$, then we have
  \begin{equation*}
    \begin{split}
      & (-1)^{|\theta|-|\varphi\cup\psi|} \left(
        h(N^-(\varphi\cup\psi), |\varphi\cup\psi|-p) -
        h(N^-(\varphi\cup\psi'), |\varphi\cup\psi'|-p)\right) \\
      &= (-1)^{|\wh\theta|-|\varphi|} \left(
        h(N^-(\varphi),|\varphi|-p+a) -
        h(N^-(\varphi),|\varphi|-p+a-1)\right) 
    \end{split}
  \end{equation*}
  which implies that $\cC(\theta,p) = \cC(\wh\theta,p-a) -
  \cC(\wh\theta,p-a+1)$ by summing over $\varphi$.  If $\psi$ is a
  column attached above the upper-right box of $\wh\theta$, then
  (\ref{eqn:hrel}) implies that
  \begin{equation*}
    \begin{split}
      &(-1)^{|\theta|-|\varphi \cup \psi|} \left(
        h(N^-(\varphi\cup\psi),|\varphi\cup\psi|-p) 
        - h(N^-(\varphi \cup B \cup \psi), 
        |\varphi \cup B \cup \psi|-p) \right) \\
      &= (-1)^{|\wh\theta|-|\varphi|} \left(
        h(N^-(\varphi),|\varphi|-p+a)
        - h(N^-(\varphi\cup B),|\varphi\cup B|-p+a) \right)\\
      & \hspace{15mm}
      - h(N^-(\varphi),|\varphi|-p+a-1) 
      + h(N^-(\varphi\cup B),|\varphi\cup B|-p+a-1)
    \end{split}
  \end{equation*}
  which again implies that $\cC(\theta,p) = \cC(\wh\theta,p-a) -
  \cC(\wh\theta,p-a+1)$, as required by (iii).
  
  Now assume that $\psi$ is not connected to $\wh\theta$.  If $a>1$
  then
  \begin{equation*}
    \begin{split}
      & (-1)^{|\theta|-|\varphi\cup\psi|} \left(
        h(N^-(\varphi\cup\psi),|\varphi\cup\psi|-p) -
        h(N^-(\varphi\cup\psi'),|\varphi\cup\psi'|-p) \right) \\
      &= (-1)^{|\wh\theta|-|\varphi|} (
      2\, h(N^-(\varphi),|\varphi|-p+a) 
      - 3\, h(N^-(\varphi),|\varphi|-p+a-1) \\
      & \hspace{23mm} + h(N^-(\varphi),|\varphi|-p+a-2) )
    \end{split}
  \end{equation*}
  while implies that $\cC(\theta,p) = 2\, \cC(\wh\theta,p-a) - 3\,
  \cC(\wh\theta,p-a+1) + \cC(\wh\theta,p-a+2)$.  And if $a=1$ then
  \begin{equation*}
    \begin{split}
      & (-1)^{|\theta|-|\varphi\cup\psi|} \left(
        h(N^-(\varphi\cup\psi),|\varphi\cup\psi|-p) -
        h(N^-(\varphi\cup\psi'),|\varphi\cup\psi'|-p) \right) \\
      &= (-1)^{|\wh\theta|-|\varphi|} \left(
        2\, h(N^-(\varphi),|\varphi|-p+1) 
        - 2\, h(N^-(\varphi),|\varphi|-p) \right) 
    \end{split}
  \end{equation*}
  implies that $\cC(\theta,p) = 2\, \cC(\wh\theta,p-a) - 2\,
  \cC(\wh\theta,p-a+1)$.  This establishes (iv) and completes the
  proof.
\end{proof}

Let $\theta$ be a rim.  A {\em KLG-tableau\/} of shape $\theta$ is a
labeling of the boxes of $\theta$ with elements from the ordered set
$\{1'<1<2'<2<\cdots\}$ such that (i) each row of $\theta$ is strictly
increasing from left to right; (ii) each column of $\theta$ is
strictly increasing from top to bottom; (iii) each box containing an
unprimed integer must be larger than or equal to all boxes southwest
of it, (iv) each box containing a primed integer must be smaller than
or equal to all boxes southwest of it, and (v) no diagonal box
contains a primed integer.  If $\theta$ is not a rim, then there are
no KLG-tableaux of shape $\theta$.  The content of a KLG-tableau is
the set of integers $i$ such that some box contains $i$ or $i'$.  The
proof of the following corollary is similar to
Corollary~\ref{cor:pieriBtab} and left to the reader.

\begin{cor}\label{cor:pieriCtab}
  The constant $c^\nu_{\la,p}$ for $K(\LG(n,2n))$ is equal to
  $(-1)^{|\nu/\la|-p}$ times the number of KLG-tableaux of shape
  $\nu/\la$ with content $\{1,2,\dots,p\}$.
\end{cor}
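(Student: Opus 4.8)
The plan is to argue exactly as in the proof of Corollary~\ref{cor:pieriBtab}. Define $\wt\cC(\theta,p)$ to be $(-1)^{|\theta|-p}$ times the number of KLG-tableaux of shape $\theta$ with content $\{1,2,\dots,p\}$. By the corollary preceding Theorem~\ref{thm:pieriCrec} we have $c^\nu_{\la,p}=\cC(\nu/\la,p)$, so it is enough to show that the integers $\wt\cC(\theta,p)$ satisfy all of the recursive identities in Theorem~\ref{thm:pieriCrec}; since those identities determine their solution uniquely, this forces $\wt\cC=\cC$ and hence the corollary, with the sign $(-1)^{|\nu/\la|-p}$ built in by definition.

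First I would treat the degenerate cases. If $\theta$ is not a rim there are no KLG-tableaux of shape $\theta$, so $\wt\cC(\theta,p)=0$. If $p\leq 0$ the only content of the form $\{1,\dots,p\}$ is the empty set, giving $\wt\cC(\theta,p)=\delta_{\theta,\emptyset}$ and $\wt\cC(\emptyset,p)$ equal to $1$ for $p\le 0$ and $0$ otherwise. For rules (i) and (ii) one has $\wh\theta=\emptyset$, so $\theta$ is a single row or a single column, and the count reduces to enumerating the strictly increasing words of length $|\theta|$ in the alphabet $\{1'<1<2'<2<\cdots\}$ whose set of underlying integers is $\{1,\dots,p\}$, subject to condition (v) when $\theta$ meets the diagonal and to conditions (iii)--(iv) along the column. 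A direct check shows this number, with the sign, is $\delta_{p,|\theta|}$ for a row, $\delta_{p,|\theta|}-\delta_{p,|\theta|-1}$ for a column meeting the diagonal, and $2\,\delta_{p,|\theta|}-\delta_{p,|\theta|-1}$ for a row or column disjoint from the diagonal, matching (i) and (ii).

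For rules (iii) and (iv) I would imitate the representative computation carried out for Corollary~\ref{cor:pieriBtab}. Write $\psi=\theta\ssm\wh\theta$ for the north-east arm and $a=|\psi|$. In any KLG-tableau of shape $\theta$, conditions (i)--(v) force the content of the boxes of $\psi$ to be one of a short list of sets; each choice in turn restricts the content of $\wh\theta$ to a set of the form $\{j,j+1,\dots,p\}$ or $\{j,\dots,p-1\}$, and summing the corresponding values $\wt\cC(\wh\theta,\cdot)$ reproduces the stated linear combination. If the north-east arm is connected to $\wh\theta$, only two contents of $\psi$ occur, and one gets $\wt\cC(\theta,p)=\wt\cC(\wh\theta,p-a)-\wt\cC(\wh\theta,p-a+1)$; if the north-east arm is disconnected, the additional freedom of priming the lower boxes of $\psi$ (permissible precisely because this arm misses the diagonal) supplies the extra terms, producing the coefficients $2,-3,1$ when $a>1$ and $2,-2$ when $a=1$. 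As in the orthogonal case I would carry out one such case in detail --- say $\psi$ a disconnected column with $1<a<p$ --- and leave the rest to the reader.

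The step I expect to be the main obstacle is this last one: making the priming bookkeeping exactly reproduce the coefficient patterns $2,-3,1$ and $2,-2$ together with the diagonal-sensitive $\delta$-terms of (i) and (ii). The delicate points are verifying how condition (v) and the ``southwest'' comparisons (iii)--(iv) constrain the boxes of $\psi$ at its junction with $\wh\theta$, and handling the boundary overlaps between the admissible content ranges (for instance when $p=a$, where two of the content sets coincide), just as those overlaps were absorbed through Lemma~\ref{lem:vanishB} in the proof of Theorem~\ref{thm:pieriBrec}. Once this case analysis is in place the remaining identities are purely formal.
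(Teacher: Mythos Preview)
Your proposal is correct and is exactly the approach the paper indicates: the paper gives no proof of this corollary, stating only that it ``is similar to Corollary~\ref{cor:pieriBtab} and left to the reader,'' and you carry out precisely that program by showing the signed KLG-tableau count satisfies the recursions of Theorem~\ref{thm:pieriCrec}. One small remark: in case~(iv) of Theorem~\ref{thm:pieriCrec} there are no $\delta_{p,a}$ correction terms (unlike Theorem~\ref{thm:pieriBrec}(iii)), so the boundary absorptions via Lemma~\ref{lem:vanishB} that you anticipate are not actually needed here---the Lagrangian recursion is uniform in $p$, which makes the verification slightly cleaner than you suggest.
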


It would be very interesting to extend this corollary to a full
Littlewood-Richard\-son rule for all the structure constants
$c^\nu_{\la\mu}$ of $K(\LG(n,2n))$.

\begin{example}\label{ex:lgsym}
  Let $X = \LG(2,4)$, $\la = \mu = (1)$, and $\nu = (2,1)$.  Then
  $c^\nu_{\la\mu} = -1 \neq 0 = c^{\mu^\vee}_{\la,\nu^\vee}$.
\end{example}

\begin{example}
  Let $\la = (6,4,1)$ and $\nu = (7,6,3,1)$.  Then the constant
  $c^\nu_{\la,5} = -9$ for $K(\LG(7,14))$ is counted by the
  KLG-tableaux displayed below.  Notice that the lower-left box of
  $\nu/\la$ is a diagonal box.\medskip

  $\tableau{10}{&&&{1'}\\&&{1'}&{5}\\{2'}&{4}\\{3}}$ \ \ \ \ 
  $\tableau{10}{&&&{1'}\\&&{2'}&{5}\\{2'}&{4}\\{3}}$ \ \ \ \ 
  $\tableau{10}{&&&{1'}\\&&{2'}&{5}\\{3'}&{5}\\{4}}$ \ \ \ \ 
  $\tableau{10}{&&&{1'}\\&&{2'}&{5}\\{3'}&{4}\\{4}}$ \ \ \ \ 
  $\tableau{10}{&&&{1'}\\&&{2'}&{5}\\{3'}&{4}\\{3}}$\medskip

  $\tableau{10}{&&&{1'}\\&&{4}&{5}\\{1'}&{3}\\{2}}$ \ \ \ \ 
  $\tableau{10}{&&&{1'}\\&&{4}&{5}\\{2'}&{4}\\{3}}$ \ \ \ \ 
  $\tableau{10}{&&&{1'}\\&&{4}&{5}\\{2'}&{3}\\{3}}$ \ \ \ \ 
  $\tableau{10}{&&&{1'}\\&&{4}&{5}\\{2'}&{3}\\{2}}$
\end{example}

%\bibliographystyle{amsplain}
%\bibliography{../BibTeX/database,kpieri}

\providecommand{\bysame}{\leavevmode\hbox to3em{\hrulefill}\thinspace}
\providecommand{\MR}{\relax\ifhmode\unskip\space\fi MR }
% \MRhref is called by the amsart/book/proc definition of \MR.
\providecommand{\MRhref}[2]{%
  \href{http://www.ams.org/mathscinet-getitem?mr=#1}{#2}
}
\providecommand{\href}[2]{#2}

\end{document}